\newtheorem{theorem}{Theorem}
\newtheorem{proposition}[theorem]{Proposition}
\newtheorem{lemma}[theorem]{Lemma}
\newtheorem{corollary}[theorem]{Corollary}
\newcommand{\A}{\mathcal{A}}
\newcommand{\B}{\mathcal{B}}
\newcommand{\E}{\mathcal{E}}
\newcommand{\F}{\mathcal{F}}
\newcommand{\G}{\mathcal{G}}
\renewcommand{\H}{\mathcal{H}}
\newcommand{\K}{\mathcal{K}}
\renewcommand{\L}{\mathcal{L}} 
\newcommand{\M}{\mathcal{M}}
\newcommand{\N}{\mathbb{N}}
\renewcommand{\P}{\mathcal{P}}
\newcommand{\R}{\mathbb{R}}
\renewcommand{\r}{\kern-2pt\upharpoonright\kern-2pt}
\newcommand{\X}{\mathcal{X}}
\newcommand{\CL}{\kern -1pt\mathit{CL}}
\newcommand{\size}[1]{\left|#1\right|}
\DeclareMathOperator{\cl}{cl}
\DeclareMathOperator{\diam}{diam}
\DeclareMathOperator{\Eq}{\mathit{Eq}}
\let\int\relax
\DeclareMathOperator{\int}{int}
\DeclareMathOperator{\osc}{osc}
\DeclareMathOperator{\pr}{pr}
\DeclareMathOperator{\rng}{rng}
\begin{document}

\title[A Galois connection related to restrictions]{A Galois connection related to restrictions of continuous real functions}
\thanks{This work was supported by the grant of the Slovak Scientific Grant Agency [VEGA 1/0097/16].}
\author{Peter Elia\v s}
\address{Mathematical Institute, Slovak Academy of Sciences, Gre\v s\'akova~6, 040~01 Ko\v sice, Slovakia}
\email{elias@saske.sk}

\date{}

\begin{abstract}
Given a family of continuous real functions $\mathcal{G}$, let $R_\mathcal{G}$ be a binary relation defined as follows: a continuous function $f\colon\mathbb{R}\to\mathbb{R}$ is in the relation with a closed set $E\subseteq\mathbb{R}$ if and only if there exists $g\in\mathcal{G}$ such that $f\upharpoonright E = g\upharpoonright E$.
We consider a Galois connection between families of continuous functions and hereditary families of closed sets of reals naturally associated to $R_\mathcal{G}$.
We study complete lattices determined by this connection and prove several results showing the dependence
of the properties of these lattices on the properties of $\mathcal{G}$.
In some special cases we obtain exact description of these lattices.
\end{abstract}

\keywords{Galois connection, complete lattice, real function, restriction, continuity}

\subjclass[2010]{06A15, 26A15}

\maketitle

\section{Introduction and statement of main results}

Let $X$ and $Y$ be topological spaces.
Denote by $\CL(X)$ the family of all closed subsets of $X$,
and by $C(X,Y)$ the family of all continuous functions $f\colon X\to Y$.
Given a fixed family $\G\subseteq C(X,Y)$, let $R_\G$ be the binary relation defined by
$$R_\G=\{(f,E)\in C(X,Y)\times\CL(X)\!:(\exists g\in\G)\,f\r E=g\r E\}.$$
For $\F\subseteq C(X,Y)$ and $\E\subseteq\CL(X)$ denote
\begin{align*}
  &E_\G(\F)=\{E\in\CL(X)\!:(\forall f\in\F)\,(f,E)\in R_\G\},\\
  &F_\G(\E)=\{f\in C(X,Y)\!:(\forall E\in\E)\,(f,E)\in R_\G\}.
\end{align*}
The mappings
$$E_\G\colon\P(C(X,Y))\to\P(\CL(X))\quad\text{and}\quad F_\G\colon\P(\CL(X))\to\P(C(X,Y))$$
form a Galois connection between the partially ordered sets $(\P(C(X,Y)),\allowbreak\subseteq)$ and $(\P(\CL(X)),\subseteq)$.
This means that $E_\G$ and $F_\G$ are inclusion-reversing mappings such that for any $\F\subseteq C(X,Y)$ and $\E\subseteq\CL(X)$
one has $$\E\subseteq E_\G(\F)\quad\text{if and only if}\quad F_\G(E)\supseteq\F.$$
The compond mappings $$E_\G F_\G\colon\P(\CL(X))\to\P(\CL(X))\quad\text{and}\quad F_\G E_\G\colon\P(C(X,Y))\to\P(C(X,Y))$$
are closure operators on $\P(\CL(X))$ and $\P(C(X,Y))$, respectively.
Hence, for any $\E\subseteq\CL(X)$, $\E\subseteq E_\G F_\G(\E)$ and $E_\G F_\G(\E)=\E$ if and only if $\E=E_\G(\F)$ for some $\F\subseteq C(X,Y)$.
Similarly, for any $\F\subseteq C(X,Y)$, $\F\subseteq F_\G E_\G(\F)$ and $F_\G E_\G(\F)=\F$ if and only if $\F=F_\G(\E)$ for some $\E\subseteq\CL(X)$.
Let us denote
\begin{align*}
  &\K_\G=\{\E\subseteq\CL(X)\!:E_\G F_\G(\E)=\E\}=\{E_\G(\F)\!:\F\subseteq C(X,Y)\},\\
  &\L_\G=\{\F\subseteq C(X,Y)\!:F_\G E_\G(\F)=\F\}=\{F_\G(\E)\!:\E\subseteq\CL(X)\}
\end{align*}
the classes of all closed families with respect to the closure operators associated with the relation $R_\G$.
The families $\K_\G$ and $\L_\G$ are in a one-to-one correspondence and, when ordered by inclusion, form dually isomorphic complete lattices.
In fact, the mapping $\E\mapsto F_\G(\E)$ is an isomorphism $(\K_\G,\subseteq)\to(\L_\G,\supseteq)$ and its inverse is the mapping
$\F\mapsto E_\G(\F)$.
Moreover, the infimum in both lattices coincides with the set-theoretic intersection.
Let us note that in a complete lattice there exist the least and the greatest elements.

For a history of Galois connections and their applications we refer the reader to \cite{Erne}.
For more on their relations to complete lattices and formal concept analysis see \cite{Davey-Priestley}.
Let us note that Galois connections occur naturally in various settings; for some examples
related to analysis and topology see \cite{Lillemets} or \cite{Szasz}.
For applications of Galois connections in the theory of cardinal characteristics see \cite{Blass}.
Our study of restrictions of continuous functions was loosely motivated by classical
notions of Kronecker and Dirichlet sets from harmonic analysis, see, \cite{Rudin} and \cite{Korner}.

In the present paper we deal with the case $X=Y=\R$.
Our aim is to analyze the structure of the lattices $\K_\G$ and $\L_\G$ for certain simple
families $\G\subseteq C(\R,\R)$.
In Section~2 we characterize the elements of the lattices $\K_\G$ and $\L_\G$.
We prove that every element of $\K_\G$ is a hereditary family of closed sets and that each
hereditary family of closed sets is the least element of some lattice $\K_\G$.
We also find a family $\G$ such that $\K_\G$ is the lattice of all nonempty hereditary families
of closed sets.
In Sections 3--5 we describe the lattice $\K_\G$ for three families $\G$ determined
by a single continuous function $g$: the singleton $\{g\}$,
the family of all functions $f$ such that $f(x)<g(x)$ for all $x$,
and the family of all functions $f$ satisfying $f(x)\neq g(x)$ for all $x$.
In each case we characterize all families that yield the same lattice $\K_\G$.

\subsection{Notation and terminology.}
For $\F\subseteq C(\R,\R)$ and $E\subseteq\R$ we denote $\F\r E=\{f\r E\!:f\in\F\}$.
For $x\in\R$ we also denote $\F[x]=\{f(x)\!:f\in\F\}$.
If $\H\subseteq C(E,\R)$, we denote $[\H]=\{f\in C(\R,\R)\!:f\r E\in\H\}$.
We write $[h]$ instead of $[\{h\}]$ for $h\in C(\R,\R)$.

For $E\subseteq\R$ let $\CL(E)$ denote the family of all closed subsets of $E$.
To avoid ambiguity, we use notation $\CL(E)$ only if $E$ is closed; otherwise the family
of all subsets of $E$ that are closed in $\R$ is expressed by the term $\CL(\R)\cap\P(E)$.
Denote $\Eq_{f,g}=\{x\in\R\!:f(x)=g(x)\}$ for $f,g\in C(\R,\R)$.
Then for any $\F\subseteq C(\R,\R)$ and $\E\subseteq\CL(\R)$ we have
$E_\G(\F)=\bigcap_{f\in\F}\bigcup_{g\in\G}\CL(\Eq_{f,g})$ and
$\F_\G(\E)=\bigcap_{E\in\E}\bigcup_{g\in\G}[g\r E]$.

We shall identify a function $f\colon\R\to\R$ and its graph $\{(x,y)\in\R^2\!:f(x)=y\}$.
We say that a family $\G\subseteq C(\R,\R)$ is \emph{complete} if $g\in\G$
holds for every $g\in C(\R,\R)$ satisfying $g\subseteq\bigcup\G$.
A family $\G$ is \emph{connected} if for any $f,g\in\G$ and $x\neq y$ there exists
$h\in\G$ such that $h(x)=f(x)$ and $h(y)=g(y)$.

Let $\R^*=\R\cup\{-\infty,\infty\}$.
For $f,g\in C(\R,\R^*)$, define inequalities $f<g$ and $f\le g$
by $(\forall x\in\R)\,f(x)<g(x)$ and $(\forall x\in\R)\,f(x)\le g(x)$, respectively.
Further, let $(f,g)=\{h\in C(\R,\R)\!:f<h<g\}$ and
$[f,g\kern.5pt]=\{h\in C(\R,\R)\!:f\le h\le g\}$.
If there is no ambiguity we denote the constant function $f\colon x\mapsto z\in C(\R,\R^*)$ simply by $z$.

Let $\X$ be a family of subsets of a topological space.
We say that $\X$ is \emph{separated} if for any distinct sets $X,Y\in\X$
one can find disjoint open sets $U,V$ such that $X\subseteq U$, $Y\subseteq V$
and $(\forall Z\in\X)\ Z\subseteq U\,\lor\,Z\subseteq V$.

\subsection{Main results}
We will prove the following equalities and inclusions.

\begin{theorem}
Let $g\in C(\R,\R)$ and let\/ $\G\subseteq C(\R,\R)$ be nonempty.
\begin{enumerate}[\rm (3b)]
  \item[\rm (1a)] $\K_{\{g\}}=\{\CL(E)\!:E\in\CL(\R)\}$.
  \item[\rm (1b)] $\K_{\{g\}}\subseteq\K_\G$ if and only if\/ $\G[x]\neq\R$ for every $x\in\R$.
  \item[\rm (1c)] $\K_{\{g\}}\supseteq\K_\G$ if and only if\/ $\G=[f,h\kern1pt]$
    for some $f,h\in C(\R,\R^*)$.
  \smallskip
  \item[\rm (2a)] $\K_{(g,\infty)}=\{\CL(\R)\cap\P(X)\!:X\subseteq\R\}$.
  \item[\rm (2b)] $\K_{(g,\infty)}\subseteq\K_\G$ if and only if for every $x\in\R$ there exists $f\in C(\R,\R)$ such that
    $E_\G(\{f\})=\{E\in\CL(\R)\!:x\notin E\}$.
  \item[\rm (2c)] $\K_{(g,\infty)}\supseteq\K_\G$ if and only if\/ $\G$ is complete and connected.
  \smallskip
  \item[\rm (3a)] $\K_{(-\infty,g)\cup(g,\infty)}=\big\{\CL(\R)\cap\bigcup_{X\in\X}\!:\X\subseteq\P(\R)\text{ is separated\/}\big\}$.
  \item[\rm (3b)] $\K_{(-\infty,g)\cup(g,\infty)}\subseteq\K_\G$ if and only if
    $\CL(\R)\cap\P(\R\setminus\{x\})\in\K_\G$ for every $x\in\R$
    and\/ $\CL(\R)\cap(\P(U)\cup\P(\R\setminus\cl U))\in\K_\G$ for every regular open set $U\subseteq\R$.
  \item[\rm (3c)] $\K_{(-\infty,g)\cup(g,\infty)}\supseteq\K_\G$ if and only if\/ $\G=\bigcup_{i\in I}\G_i$
    for some linearly ordered set $(I,<)$ and an indexed system
    of complete connected families $\{\G_i\!:i\in I\}$ 
    such that for every $i\in I$ there exist functions
    $f_i,h_i\in C(\R,\R^*)$ satisfying
    $$\bigcup_{j<i}\G_j\subseteq(-\infty,f_i),\quad
    \G_i\subseteq(f_i,h_i)\quad\text{and}\quad
    \bigcup_{j>i}\G_j\subseteq(h_i,\infty).$$
\end{enumerate}
\end{theorem}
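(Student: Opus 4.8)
The unifying strategy is to reduce every assertion to an understanding of the single‑function closures $E_\G(\{\phi\})$, since $E_\G(\F)=\bigcap_{\phi\in\F}E_\G(\{\phi\})$ and, by the general theory, the infimum in each lattice is intersection. For the three reference families a direct computation describes these closures: writing $U_\phi=\{x:\phi(x)>g(x)\}$, one checks that $E_{\{g\}}(\{\phi\})=\CL(\Eq_{\phi,g})$, that $E_{(g,\infty)}(\{\phi\})=\CL(\R)\cap\P(U_\phi)$, and that $E_{(-\infty,g)\cup(g,\infty)}(\{\phi\})$ consists of the closed sets lying entirely in $\{\phi>g\}$ or entirely in $\{\phi<g\}$ (using that $\R$ is connected, so a nowhere‑zero $\phi-g$ keeps a constant sign). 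Parts (1a), (2a), (3a) then follow by a realization argument in the reverse direction: given a closed $E$ (resp.\ an arbitrary $X\subseteq\R$, resp.\ a separated family $\X$) I construct functions whose equalizer with $g$ is $E$ (resp.\ whose sets $U_\phi$ intersect to $X$, resp.\ whose sign components cut out the blocks of $\X$). The only nontrivial ingredients are standard: a positive continuous function on a closed set extends to a positive continuous function on $\R$, and every $X\subseteq\R$ equals $\bigcap_{y\notin X}(\R\setminus\{y\})$; the separated case of (3a) additionally requires matching the combinatorics of intersections of two‑block partitions with the definition of a separated family.

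The containments of (1b), (2b), (3b) are handled by reducing to generators and using that $\K_\G$ is closed under arbitrary intersection. For (1b) the key observation is that $\G[x]=\R$ forces $\{x\}\in E_\G(\F)$ for \emph{every} $\F$ (any $\phi$ agrees at $x$ with some $g\in\G$), so $\{x\}$ lies in every member of $\K_\G$ and hence $\{\emptyset\}=\CL(\emptyset)\notin\K_\G$; conversely, if each $\G[x]\neq\R$ I realize $\CL(E)$ by choosing, for every $y\notin E$, a value $c_y\notin\G[y]$ and a function agreeing with a fixed $g\in\G$ on $E$ but taking value $c_y$ at $y$. For (2b) and (3b) the reference lattices are generated under intersection by the co‑point families $\CL(\R)\cap\P(\R\setminus\{x\})$ (and, for (3b), by the regular‑open splits $\CL(\R)\cap(\P(U)\cup\P(\R\setminus\cl U))$), so the containment holds iff each generator belongs to $\K_\G$. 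The remaining point is that membership of a co‑point family in $\K_\G$ is equivalent to its realizability by a \emph{single} function: such a family is the largest hereditary family omitting the singleton $\{x\}$, so any $\F$ realizing it already contains one $f$ with $f(x)\notin\G[x]$, and then $E_\G(\{f\})$ is squeezed to equal it; an analogous maximality argument handles the regular‑open splits.

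The reverse containments of (1c), (2c), (3c) are the structural heart, recovering global properties of $\G$ from the shape of its lattice. The cleanest case is (2c): membership in $\K_{(g,\infty)}$ is exactly the property of being \emph{pointwise determined}, $\E=\CL(\R)\cap\P(\{x:\{x\}\in\E\})$, and such families are closed under intersection, so $\K_\G\subseteq\K_{(g,\infty)}$ iff every $E_\G(\{\phi\})$ is pointwise determined. From this, completeness and connectedness drop out at once: if $g'\subseteq\bigcup\G$ then every singleton lies in $E_\G(\{g'\})$, which being pointwise determined must be all of $\CL(\R)$, whence $\R\in E_\G(\{g'\})$ and $g'\in\G$; and given $f_1,f_2\in\G$ and $x\neq y$, any $\phi$ with $\phi(x)=f_1(x)$, $\phi(y)=f_2(y)$ has $\{x\},\{y\}\in E_\G(\{\phi\})$, hence $\{x,y\}\in E_\G(\{\phi\})$, and the witnessing $h\in\G$ gives connectedness. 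The easy direction of (1c) follows the same template: if $\G=[f,h]$ then the median $\mathrm{med}(f,\phi,h)$ realizes $E_\G(\{\phi\})=\CL(\{x:f(x)\le\phi(x)\le h(x)\})$, a principal family, so $\K_\G\subseteq\K_{\{g\}}$.

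The main obstacles lie in the converses of (1c), (2c) and in all of (3c). For (2c) one must prove that a complete connected $\G$ is pointwise determined, i.e.\ that whenever $\phi(z)\in\G[z]$ for all $z$ in a closed $E$ there is a single $k\in\G$ with $k\r E=\phi\r E$; by completeness this reduces to extending $\phi\r E$ to a continuous selection of $\bigcup\G$, and the difficulty is purely continuity at points of $E$ that accumulate the complementary gaps, where the two‑point interpolation supplied by connectedness must be upgraded to control the oscillation of the filled‑in values. For (1c) the converse requires reconstructing $f(x)=\inf\G[x]$ and $h(x)=\sup\G[x]$, proving them continuous (a priori only semicontinuous) and proving order‑convexity of $\G$, by showing that a discontinuity or a fiber gap produces a non‑principal $\G$‑closed family. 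Part (3c) is the hardest: one must read off from the separated structure of $\K_\G$ a linear order on a decomposition $\G=\bigcup_{i\in I}\G_i$ into complete connected blocks, together with the sandwiching functions $f_i<h_i$, essentially inverting (3a) and layering the (2c) analysis along the order. I expect the extension lemma behind (2c) and the simultaneous recovery of the order and the separating functions in (3c) to be the two genuinely hard steps, the remaining verifications being routine gluing and realization arguments of the kind used for the (a) parts.
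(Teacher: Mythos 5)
Your roadmap reproduces the paper's architecture for the (a) and (b) parts and the easy halves of the (c) parts almost exactly: the single-function closures you compute are the ones the paper works with; your median trick for the easy half of (1c) is literally the paper's $g=\min\{\max\{f,h_1\},h_2\}$; your derivation of completeness and connectedness in (2c) from singletons and pairs is the argument of Theorem~\ref{thm-strictly-below-equiv-2}; and your plan for the hard half of (1c) (reconstruct $h_1=\inf\G[x]$, $h_2=\sup\G[x]$, prove closedness of $\bigcup\G$, connected fibers and continuity by exhibiting non-principal closed families) is exactly the paper's proof and is completable as sketched. You even have one genuine simplification: you obtain the containments in (2b) and (3b) from the generators $\A_x$ and $\B_U$ using that $\K_\G$ is closed under arbitrary intersections (infima are intersections), noting $\CL(\R)\cap\P(X)=\bigcap_{x\notin X}\A_x$ and that each separated-family element is an intersection of $\A_x$'s and $\B_U$'s; the paper instead verifies $E_\G(F_\G(\E))=\E$ directly via single-function witnesses (which it needs anyway for its sharper ``single $f$'' conditions, as in your squeezing argument $\A_x\subseteq E_\G(\{f\})\subseteq\A_x$).

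However, the two steps you yourself flag as hard are genuine gaps, and in both cases the missing idea is specific. First, the extension lemma behind the converse of (2c): you correctly locate the difficulty at points of $E$ accumulating infinitely many complementary gaps, but connectedness only supplies \emph{some} two-point interpolation on each gap, and nothing in your sketch prevents these fillers from oscillating wildly near the accumulation point. The paper's mechanism (Lemmas \ref{lem-func-conn-1} and \ref{lem-func-conn-2}) is to choose on each gap $I_n$ a filler of nearly minimal oscillation, $\osc(g_n)\le o_n+2^{-n}$, and to bound $o_n\le\diam(f[I_n]\cup h[I_n])$ by rerouting the interpolation along an auxiliary function $h\in\G$ passing through the limit value; this bound is what makes $\osc(g_{n_k})\to 0$ and hence $g$ continuous. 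Second, in the forward direction of (3c), ``inverting (3a)'' does not suffice. The induced relation $a\sim b$ (joinable by a member of $\G$) fails transitivity when first coordinates coincide --- the paper repairs this with $a\approx b\Leftrightarrow(\exists c)\,a\sim c\sim b$ --- and, decisively, the slicing functions $f_i,h_i$ must be \emph{continuous}, whereas the natural envelopes between consecutive blocks are only upper/lower semicontinuous and may touch. The paper extracts a further limit-closure property of $\sim$ from condition (1) (``sequentiality'') to show that where the envelopes coincide the common value avoids $\bigcup\G$, and then invokes Michael's insertion theorem to interpolate a continuous function strictly between them wherever they differ; the converse additionally needs Lemma~\ref{lem-sliced-1} to monotonize the slicing functions before running your (2c)-style analysis blockwise. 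None of these ingredients (the $\approx$ trick, sequentiality, the insertion theorem) appear in your plan, and they are not recoverable by routine gluing.
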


First three statements are proved in Section~3 (Theorems~\ref{thm-singleton}, \ref{thm-contains-all-C(E)} and \ref{thm-is-contained-in-all-C(E)}),
statements (2a)--(2c) in Section~4 (Theorems~\ref{thm-strictly-below-1}, \ref{thm-strictly-below-equiv-1} and \ref{thm-strictly-below-equiv-2}),
and (3a)--(3c) in Section~5 (Theorems~\ref{thm-diff-equal}, \ref{thm-diff-2} and \ref{thm-diff-3}).

\section{The elements of lattices $\K_\G$ and $\L_G$}

We begin with two extremal cases.

\begin{proposition} \label{prop-1}
Let $\G\subseteq C(\R,\R)$.
\begin{enumerate}[\rm (1)]
  \item $\K_\emptyset=\{\emptyset,\CL(\R)\}$, $\L_\emptyset=\{\emptyset,C(\R,\R)\}$.
  \item $\K_{C(\R,\R)}=\{\CL(\R)\}$, $\L_{C(\R,\R)}=\{C(\R,\R)\}$.
\end{enumerate}
\end{proposition}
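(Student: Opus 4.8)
The plan is to compute the relation $R_\G$ explicitly in each extremal case and then read off the images of $E_\G$ and $F_\G$ directly from their definitions, paying attention to the behaviour of the universal quantifiers on the empty family.

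For part~(1) I would first observe that $R_\emptyset=\emptyset$: since there is no $g\in\emptyset$, the defining condition $(\exists g\in\emptyset)\,f\r E=g\r E$ fails for every pair $(f,E)$. Consequently, in $E_\emptyset(\F)=\{E\in\CL(\R):(\forall f\in\F)\,(f,E)\in R_\emptyset\}$ the inner condition $(f,E)\in R_\emptyset$ is never met. Hence if $\F=\emptyset$ the universal quantifier is vacuously true and $E_\emptyset(\emptyset)=\CL(\R)$, whereas if $\F\neq\emptyset$ any choice of $f\in\F$ falsifies the condition, giving $E_\emptyset(\F)=\emptyset$. Thus the image of $E_\emptyset$ is exactly $\{\emptyset,\CL(\R)\}$, and by definition $\K_\emptyset=\{E_\emptyset(\F):\F\subseteq C(\R,\R)\}=\{\emptyset,\CL(\R)\}$. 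The computation of $\L_\emptyset$ is entirely symmetric: $F_\emptyset(\emptyset)=C(\R,\R)$ by vacuous truth and $F_\emptyset(\E)=\emptyset$ for every nonempty $\E$, so $\L_\emptyset=\{\emptyset,C(\R,\R)\}$.

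For part~(2) I would note the opposite extreme, namely $R_{C(\R,\R)}=C(\R,\R)\times\CL(\R)$: for any $f$ and $E$ one may simply take $g=f\in C(\R,\R)$, and then $f\r E=g\r E$ holds trivially. Now the inner condition $(f,E)\in R_{C(\R,\R)}$ in the definitions of $E_{C(\R,\R)}$ and $F_{C(\R,\R)}$ is always satisfied, so the universal quantifiers impose no restriction whatsoever. Therefore $E_{C(\R,\R)}(\F)=\CL(\R)$ for every $\F\subseteq C(\R,\R)$ and $F_{C(\R,\R)}(\E)=C(\R,\R)$ for every $\E\subseteq\CL(\R)$, whence $\K_{C(\R,\R)}=\{\CL(\R)\}$ and $\L_{C(\R,\R)}=\{C(\R,\R)\}$.

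I do not expect any substantial obstacle here; the entire content lies in the two observations $R_\emptyset=\emptyset$ and $R_{C(\R,\R)}=C(\R,\R)\times\CL(\R)$, after which the statement is a direct unwinding of the definitions. The only point requiring a moment's care is the correct treatment of the vacuous universal quantifier over the empty family, which is precisely what produces the extra elements $\CL(\R)$ and $C(\R,\R)$ that distinguish part~(1) from part~(2).
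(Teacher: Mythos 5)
Your proof is correct: $R_\emptyset=\emptyset$ and $R_{C(\R,\R)}=C(\R,\R)\times\CL(\R)$, together with the case split on whether the input family is empty, gives exactly the four claimed images, and your use of $\K_\G=\{E_\G(\F):\F\subseteq C(\R,\R)\}$ and $\L_\G=\{F_\G(\E):\E\subseteq\CL(\R)\}$ matches the paper's definitions. The paper states this proposition without proof, and your argument is precisely the direct unwinding of the definitions (including the correct handling of the vacuous quantifier) that the author leaves to the reader.
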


If $\G\neq\emptyset$ then every family $\E\in\K_\G$ (as well as every $\F\in\L_\G$)
is nonempty because it contains $\emptyset$.
Hence, $\emptyset\in\K_\G$ if and only if $\emptyset\in\L_\G$ if and only if $\G=\emptyset$.

\begin{proposition} \label{prop-2}
Let $\G\subseteq C(\R,\R)$, $\G\neq\emptyset$.
\begin{enumerate}[\rm (1)]
  \item The least element of\/ $\K_\G$ is $E_\G(C(\R,\R))=\{E\in \CL(\R)\!:\G\r E=C(E,\R)\}$.
  \item The least element of $\L_\G$ is $F_\G(\CL(\R))=F_\G(\{\R\})=\G$.
\end{enumerate}
\end{proposition}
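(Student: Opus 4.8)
The plan is to unwind the definitions of the Galois maps using the explicit formulas $E_\G(\F)=\bigcap_{f\in\F}\bigcup_{g\in\G}\CL(\Eq_{f,g})$ and $F_\G(\E)=\bigcap_{E\in\E}\bigcup_{g\in\G}[g\r E]$ supplied in the notation subsection. Since $E_\G$ and $F_\G$ are inclusion-reversing, the least element of $\K_\G$ is the image under $E_\G$ of the largest possible $\F$, namely $\F=C(\R,\R)$, and the least element of $\L_\G$ is the image under $F_\G$ of the largest possible $\E$, namely $\E=\CL(\R)$. So it suffices to compute these two images.

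For part (1), I would start from $E_\G(C(\R,\R))=\{E\in\CL(\R)\!:(\forall f\in C(\R,\R))\,(f,E)\in R_\G\}$. By the definition of $R_\G$, the condition $(f,E)\in R_\G$ says that $f\r E$ agrees with $g\r E$ for some $g\in\G$, i.e.\ $f\r E\in\G\r E$. Thus $E$ lies in $E_\G(C(\R,\R))$ precisely when \emph{every} $f\in C(\R,\R)$ has its restriction $f\r E$ in $\G\r E$. Since every continuous function on the closed set $E$ extends to a continuous function on $\R$ (Tietze), the set $\{f\r E\!:f\in C(\R,\R)\}$ is exactly $C(E,\R)$; hence the condition becomes $C(E,\R)\subseteq\G\r E$. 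As $\G\r E\subseteq C(E,\R)$ always holds, this is equivalent to $\G\r E=C(E,\R)$, giving the stated description.

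For part (2), I would evaluate $F_\G(\CL(\R))=\bigcap_{E\in\CL(\R)}\bigcup_{g\in\G}[g\r E]$. The key observation is that the intersection over all closed $E$ is controlled by the single largest closed set $E=\R$: since $f\r\R=f$, a function $f$ satisfies $f\in\bigcup_{g\in\G}[g\r\R]$ iff $f\r\R=g\r\R$ for some $g\in\G$, i.e.\ iff $f\in\G$. Thus $F_\G(\{\R\})=\G$. To conclude $F_\G(\CL(\R))=\G$ as well, note that $\{\R\}\subseteq\CL(\R)$ forces $F_\G(\CL(\R))\subseteq F_\G(\{\R\})=\G$ by inclusion-reversal, while $\G=F_\G(\{\R\})$ is already a subset of every $\bigcup_{g\in\G}[g\r E]$ because $g\in[g\r E]$ for each $g\in\G$ and each $E$; hence $\G\subseteq F_\G(\CL(\R))$. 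Combining the two inclusions yields the claimed equalities $F_\G(\CL(\R))=F_\G(\{\R\})=\G$.

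I do not expect a serious obstacle here; the only point requiring care is the appeal to Tietze extension in part (1) to identify $\{f\r E\!:f\in C(\R,\R)\}$ with $C(E,\R)$, which is what lets us replace the quantifier over all $f\in C(\R,\R)$ by the clean surjectivity condition $\G\r E=C(E,\R)$.
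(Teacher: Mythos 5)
Your proof is correct and is essentially the argument the paper intends: Proposition~\ref{prop-2} is stated there without proof, being regarded as an immediate consequence of the antitonicity of $E_\G$ and $F_\G$, the fact that $\K_\G$ and $\L_\G$ are exactly the images of these maps, and the explicit formulas from the notation subsection---precisely the route you take. The one genuinely substantive point, identifying $\{f\r E\!:f\in C(\R,\R)\}$ with $C(E,\R)$ for closed $E\subseteq\R$ via Tietze extension (or, more elementarily, piecewise-linear extension across the intervals complementary to $E$), you flag and handle correctly, including the trivial inclusion $\G\r E\subseteq C(E,\R)$ that turns the containment into the stated equality.
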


It follows that the lattices $\K_\G$ and $\L_\G$ have at least two elements if and only if
$\G\neq C(\R,\R)$.

By Proposition~\ref{prop-2} (2), every family $\G\subseteq C(\R,\R)$ is the least element of $\L_\G$.
Now we are going to characterize families $\E\subseteq\CL(\R)$ that can be least elements of lattices
$\K_\G$.

We say that a family $\E\subseteq\CL(\R)$ is \emph{hereditary} if for any $D,E\in\CL(\R)$,
if $D\subseteq E$ and $E\in\E$ then $D\in\E$.
We show that the elements of lattices $\K_\G$ are exactly hereditary subfamilies of $\CL(\R)$
and every hereditary family $\E\subseteq\CL(\R)$ is the least element of some lattice $\K_\G$.

\begin{proposition} \label{prop-hereditary}
Let $\E\subseteq \CL(\R)$.
The following conditions are equivalent.
\begin{enumerate}[\rm (1)]
  \item There exists $\G\subseteq C(\R,\R)$ such that $\E\in \K_\G$.
  \item $\E$ is hereditary.
\end{enumerate}
\end{proposition}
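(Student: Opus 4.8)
The plan is to prove the two implications separately. For the direction $(1)\Rightarrow(2)$, I want to show that any element of any lattice $\K_\G$ is hereditary. So suppose $\E\in\K_\G$, meaning $E_\G F_\G(\E)=\E$, and suppose $D,E\in\CL(\R)$ with $D\subseteq E$ and $E\in\E$. I must show $D\in\E$. Since $\E=E_\G(\F)$ where $\F=F_\G(\E)$, it suffices to show that $(f,D)\in R_\G$ for every $f\in\F$. But $E\in\E=E_\G(\F)$ already gives $(f,E)\in R_\G$ for every $f\in\F$, so for each such $f$ there is $g\in\G$ with $f\r E=g\r E$. Since $D\subseteq E$, restricting further yields $f\r D=g\r D$, hence $(f,D)\in R_\G$. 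Therefore $D\in E_\G(\F)=\E$, proving $\E$ is hereditary. This direction is essentially immediate from the definitions and the monotonicity of restriction.

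For the harder direction $(2)\Rightarrow(1)$, given a hereditary family $\E$, I must construct a family $\G$ with $\E\in\K_\G$. The natural idea is to make $\E$ the \emph{least} element of the resulting lattice by an appropriate choice of $\G$, so I would aim to find $\G$ with $E_\G(C(\R,\R))=\E$ and then appeal to Proposition~\ref{prop-2}(1), which identifies $E_\G(C(\R,\R))$ as the least element of $\K_\G$ (in particular an element of $\K_\G$). By that proposition, $E_\G(C(\R,\R))=\{E\in\CL(\R):\G\r E=C(E,\R)\}$, so the goal becomes: choose $\G$ so that $\G\r E=C(E,\R)$ holds exactly for the sets $E\in\E$. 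The first inclusion $\G\r E\subseteq C(E,\R)$ is automatic, so what matters is arranging that $\G\r E$ equals all of $C(E,\R)$ precisely when $E\in\E$, and is a proper subset otherwise.

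The construction I would try is to take $\G=\bigcup_{E\in\E}[C(E,\R)]$, that is, the set of all continuous $f\colon\R\to\R$ whose restriction to some member of $\E$ is arbitrary — concretely, $\G=\{f\in C(\R,\R):f\r E\text{ is unrestricted for some }E\in\E\}$, which in the paper's notation is the union $\bigcup_{E\in\E}\{f:f\r E\in C(E,\R)\}=C(\R,\R)$. Since that union is everything, I instead keep $\G$ small: let $\G$ consist of a single flexible family realizing all restrictions on members of $\E$ and nothing more. The clean choice is $\G=\{f\in C(\R,\R):(\exists E\in\E)\,f\r(\R\setminus E)\equiv 0\}$ when $\E$ has a suitable form, but in general the correct move is to pick, for each $E\in\E$, enough functions so that $\G\r E=C(E,\R)$, while ensuring $\G\r F\neq C(F,\R)$ for $F\notin\E$. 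Verifying the first equality is routine using Tietze extension (every $h\in C(E,\R)$ extends to some $f\in C(\R,\R)$, and we put $f$ into $\G$). The main obstacle is the second requirement: I must guarantee that for $F\notin\E$ some function in $C(F,\R)$ is \emph{not} realized as $g\r F$ for any $g\in\G$. Here heredity of $\E$ is exactly what is needed — if $F\notin\E$ then $F$ is not contained in any member of $\E$, which I would exploit to exhibit a point or small closed subset of $F$ witnessing a restriction that no member of $\G$ can match. Pinning down this witness cleanly, using heredity to rule out $F\subseteq E$ for $E\in\E$, is the step I expect to require the most care.
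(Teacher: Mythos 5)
Your direction $(1)\Rightarrow(2)$ is correct and is essentially the paper's own argument: restriction is monotone, so any family of the form $E_\G(\F)$ is hereditary. The genuine gap is in $(2)\Rightarrow(1)$: you never actually produce $\G$. You announce the plan --- realize $\E$ as the \emph{least} element of $\K_\G$, i.e.\ arrange $E_\G(C(\R,\R))=\E$ --- but the decisive step, exhibiting for every $F\in\CL(\R)\setminus\E$ a function in $C(F,\R)$ not matched by any member of $\G$, is explicitly deferred (``the step I expect to require the most care''). That step is not routine, and heredity alone does not supply it: heredity only tells you $F\nsubseteq E$ for each $E\in\E$, while the Tietze extensions you put into $\G$ to ensure $\G\r E=C(E,\R)$ for $E\in\E$ have uncontrolled behaviour off $E$ and may collectively realize every function on $F$. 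What you are aiming at is in fact the paper's subsequent, stronger theorem (every hereditary $\E$ is the least element of some $\K_\G$), whose proof needs real machinery: two disjoint countable dense sets $D_0,D_1$, a membership condition on $g\in\G$ quantified over all $E\in\CL(\R)\setminus\E$, and Lemma~\ref{lem-incr-bij} on increasing bijections passing through prescribed dense sets. As written, your second direction is a plan whose main difficulty is unresolved.

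The proposition itself needs much less, and the paper's proof is a few lines. Membership in $\K_\G$ does not require $\E$ to be the least element, only that $\E=E_\G(\F)$ for \emph{some} $\F\subseteq C(\R,\R)$, and one may take $\F$ to be a singleton. Fix any $f\in C(\R,\R)$; for each $E\in\E$ choose $g_E\in C(\R,\R)$ with $\Eq_{f,g_E}=E$ (for instance $g_E(x)=f(x)+\operatorname{dist}(x,E)$ when $E\neq\emptyset$), and set $\G=\{g_E\!:E\in\E\}$. Then $D\in E_\G(\{f\})$ if and only if $D\subseteq\Eq_{f,g_E}=E$ for some $E\in\E$, which by heredity is equivalent to $D\in\E$; hence $\E=E_\G(\{f\})\in\K_\G$. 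The witnessing problem that blocked your construction disappears here because $\G$ is tested against the single function $f$ only, and the equalizer of $f$ with each $g_E$ is exactly $E$ by construction.
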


\begin{proof}
$(1) \Rightarrow (2)$. If $\E\in \K_\G$ then $\E=E_\G(\F)$ for some $\F\subseteq C(\R,\R)$.
It follows from the definition that the family $E_\G(\F)$ is hereditary.

$(2) \Rightarrow (1)$. Fix $f\in C(\R,\R)$.
For every $E\in\E$, fix $g_E\in C(\R,\R)$ such that $\Eq_{f,g_E}=E$ and
let $\G=\{g_E\!:E\in\E\}$.
Then $\E=E_\G(\{f\})\in\K_\G$.
\end{proof}

\begin{lemma} \label{lem-incr-bij}
Let $I,J\subseteq\R$ be non-degenerate, bounded, closed intervals.
For every $n\in\omega$, let $x_n\in\int I$ be distinct and $A_n\subseteq J$ be dense in $J$.
Then there exists an increasing bijection $f\colon I\to J$ such that $(\forall n\in\omega)\,f(x_n)\in A_n$.
\end{lemma}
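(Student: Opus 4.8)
The plan is to realize $f$ as the monotone extension of an order isomorphism between suitable countable dense subsets of $I$ and $J$, built by a back-and-forth argument. Write $I=[a,b]$ and $J=[c,d]$ with $a<b$ and $c<d$. Any increasing bijection $I\to J$ must send $a$ to $c$ and $b$ to $d$, so I will construct a strictly increasing map $\phi$ from a countable set $D\subseteq I$ with $a,b\in D$ and $\{x_n:n\in\omega\}\subseteq D$ onto a countable set $D'\subseteq J$ with $c,d\in D'$, arranging $\phi(a)=c$, $\phi(b)=d$, $\phi(x_n)\in A_n$ for every $n$, and making both $D$ and $D'$ dense (in $I$ and $J$ respectively). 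Fix once and for all enumerations of a countable base of nonempty open subintervals of $I$ and of $J$, say those with rational endpoints.

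I construct $\phi$ as the union of an increasing chain of finite, strictly increasing partial maps $\phi_k$, each having $a,b$ in its domain with $\phi_k(a)=c$ and $\phi_k(b)=d$. At each stage I address one of three interleaved tasks. \emph{Forth for $x_n$:} if $x_n$ is not yet in the domain, let $p<x_n<q$ be the consecutive domain points bracketing it; then $(\phi_k(p),\phi_k(q))$ is a nonempty open subinterval of $(c,d)$, and since $A_n$ is dense in $J$ it meets this interval, so I set $\phi(x_n)$ to be some point of $A_n\cap(\phi_k(p),\phi_k(q))$. \emph{Density of $D$:} for the next basic interval $U\subseteq I$ not yet met by the domain, add any point of $U\cap\int I$ and assign it an arbitrary value in the corresponding open image gap. \emph{Density of $D'$:} for the next basic interval $V\subseteq J$ not yet met by the range, choose a point of $V$ together with a point of $I$ in the matching open gap of the domain, and adjoin this pair. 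Every step preserves finiteness and strict monotonicity, and each requirement is eventually handled, so $\phi\colon D\to D'$ is a strictly increasing bijection between dense subsets of $I$ and $J$ with $\phi(x_n)\in A_n$.

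Finally I extend $\phi$ to $f\colon I\to J$ by $f(x)=\sup\{\phi(t):t\in D,\ t\le x\}$. This is nondecreasing, agrees with $\phi$ on $D$ (so $f(a)=c$, $f(b)=d$, and $f(x_n)=\phi(x_n)\in A_n$), and is strictly increasing because density of $D$ lets one interpose $t<t'$ in $D$ between any $x<x'$, giving $f(x)\le\phi(t)<\phi(t')\le f(x')$. The range of $f$ contains the dense set $D'$, hence $f$ has no jumps; a monotone map without jumps is continuous, and a continuous increasing map of $[a,b]$ with $f(a)=c$, $f(b)=d$ is onto $J$ by the intermediate value property. Thus $f$ is the desired increasing bijection. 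The step to get right is the \emph{back} task: density of $D$ alone does not prevent a jump, since a monotone map of a dense domain can skip an open interval of values, so one genuinely must force $D'$ dense in $J$; once that is secured, continuity and surjectivity come for free. The constraints $\phi(x_n)\in A_n$ never conflict with the density tasks precisely because each is resolved inside an arbitrarily small prescribed open gap, where density of $A_n$ (respectively of the fixed countable bases) always supplies an admissible value.
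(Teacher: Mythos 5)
Your argument is correct in outline but takes a genuinely different route from the paper's. The paper proceeds analytically: it builds $f$ as the uniform limit of piecewise linear increasing bijections $f_n\colon I\to J$, at stage $n$ moving the value at $x_n$ into $A_n$ within a $2^{-n}$-tube around $f_n$, and it preserves strict monotonicity in the limit by the device of keeping every $f_n-\frac{1}{2}f_0$ strictly increasing (so the limit of these is non-decreasing and hence $f$ itself is strictly increasing); continuity and surjectivity then come from uniform convergence of surjections. You instead run the Cantor back-and-forth construction of a strictly increasing map $\phi$ between countable dense subsets of $I$ and $J$, folding the constraints $\phi(x_n)\in A_n$ into the ``forth'' steps, and complete $\phi$ to $f$ by suprema. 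Your identification of the crucial point---that the ``back'' steps forcing the range to be dense in $J$ are exactly what rule out jumps, hence give continuity and surjectivity---is correct, and your route trades the paper's convergence bookkeeping (and its $f_n-\frac{1}{2}f_0$ trick) for combinatorial scheduling; each approach is self-contained and of comparable length.

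There is one small gap to patch. Your ``forth for $x_n$'' step begins ``if $x_n$ is not yet in the domain,'' but nothing in the construction as written prevents a density-of-$D$ step or a back step from adjoining some untreated $x_m$ to the domain with an arbitrary value; once that happens, $\phi(x_m)\in A_m$ can fail and is never repaired. The fix is routine: when choosing the auxiliary domain points in those two tasks, pick them outside the countable set $\{x_n\!:n\in\omega\}$, which is possible because the admissible points always form a nondegenerate open interval; alternatively, whenever an auxiliary step would add some $x_m$, assign it a value in $A_m$ intersected with the relevant image gap, which is nonempty since $A_m$ is dense in $J$. With either patch the proof is complete.
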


\begin{proof}
Let us note that any increasing bijection from $I$ to $J$ is necessarily continuous.
Define a sequence of increasing bijections $f_n\colon I\to J$ by induction as follows.
Let $f_0$ be linear.
For every $n$, let $a_0,\dots,a_{n+1}$ be the increasing enumeration of the set
$\{a,b\}\cup\{x_j\!:j<n\}$, where $I=[a,b]$.
Assume that $f_n\colon I\to J$ is an increasing bijection
which is linear on each interval $[a_j,a_{j+1}]$ and moreover the function $f_n-\frac{1}{2}f_0$
is strictly inceasing.
For every $j\le n$, if $x_n\notin(a_j,a_{j+1})$ then
let $f_{n+1}(x)=f_n(x)$ for every $x\in[a_j,a_{j+1}]$.
If $x_n\in(a_j,a_{j+1})$, let $f_{n+1}$ be defined linearly on intervals
$[a_j,x_n]$ and $[x_n,a_{j+1}]$, where $f_{n+1}(a_j)=f_n(a_j)$, $f_{n+1}(a_{j+1})=f_n(a_{j+1})$,
and $f_{n+1}(x_n)\in A_n$ is chosen so that
$f_{n+1}-\frac{1}{2}f_0$ is strictly increasing and for every $x\in(a_j,a_{j+1})$,
$\size{f_{n+1}(x)-f_n(x)}<2^{-n}$.
We obtain a uniformly convergent sequence of increasing bijections $f_n\colon I\to J$.
Its limit $f\colon I\to J$ is continuous and surjective.
Function $f-\frac{1}{2}f_0$, being a limit of a sequence of strictly increasing functions, is non-decreasing,
hence $f$ is strictly increasing.
For every $n$ we have $f(x_n)=f_{n+1}(x_n)\in A_n$.
\end{proof}

\begin{theorem}
Let $\E\subseteq\CL(\R)$ be hereditary.
Then there exists $\G\subseteq C(\R,\R)$ such that $\E$ is the least element of\/ $\K_\G$.
\end{theorem}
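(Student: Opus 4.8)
The plan is to produce, for a given hereditary $\E$, a family $\G$ realizing $\E$ as the bottom of its lattice. By Proposition~\ref{prop-2}(1) the least element of $\K_\G$ equals $\{E\in\CL(\R):\G\r E=C(E,\R)\}$, so it suffices to arrange two properties: (A) \emph{realization}: for every $E\in\E$ and every $h\in C(E,\R)$ there is $g\in\G$ with $g\r E=h$; and (B) \emph{blocking}: for every closed $D\notin\E$ there is $\varphi\in C(D,\R)$ with $g\r D\neq\varphi$ for all $g\in\G$. Property (A) forces $\G\r E=C(E,\R)$ for $E\in\E$, while (B) forces $\G\r D\subsetneq C(D,\R)$ for $D\notin\E$; together they pin the least element down to exactly $\E$.

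For the construction I would fix a reference $f\in C(\R,\R)$ and set $\G=\{g\in C(\R,\R):\Eq_{f,g}\in\E\}$, i.e.\ the continuous functions whose coincidence set with $f$ lies in $\E$. With this choice (B) is immediate and robust: given $D\notin\E$, take $\varphi=f\r D$. If some $g\in\G$ satisfied $g\r D=f\r D$, then $D\subseteq\Eq_{f,g}\in\E$, and since $\E$ is hereditary and $D$ is closed this would give $D\in\E$, a contradiction. This is the same heredity mechanism used in Proposition~\ref{prop-hereditary}, now read on the coincidence set; and it generalizes that proof, where one took a single $g_E$ with $\Eq_{f,g_E}=E$, by admitting \emph{all} functions whose coincidence set is controlled by $\E$. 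Thus the entire difficulty is concentrated in (A).

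The realization step is where I expect the main obstacle. The natural attempt, given $E\in\E$ and $h\in C(E,\R)$, is to extend $h$ to some $g$ that meets $f$ only inside $E$, so that $\Eq_{f,g}\subseteq E$ and hence $\Eq_{f,g}\in\E$ by heredity. On $\R\setminus E$, a disjoint union of complementary intervals, this means continuing $h$ from its boundary values while staying off the graph of $f$. On the unbounded rays only one endpoint value is prescribed, and one can always run monotonically to one side of $f$ (e.g.\ let $g-f$ keep a constant sign), so these cause no trouble. The genuine difficulty is a bounded gap $(a,b)$ with $a,b\in E$ for which $h(a)$ and $h(b)$ lie on opposite sides of $f$: then the intermediate value theorem forces $g$ to coincide with $f$ somewhere in $(a,b)$, enlarging $\Eq_{f,g}$ beyond $E$, possibly out of $\E$.

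This is exactly the point at which I would bring in Lemma~\ref{lem-incr-bij}. The plan is to build the connecting arc of $g$ across each bounded gap as (a piece of) an increasing bijection between suitable intervals, and to exploit the freedom the Lemma grants—prescribing the arc's values at a chosen countable set so that they land in prescribed dense target sets—to drive every unavoidable meeting of $g$ with $f$ onto a thin, controlled countable set, chosen so that the resulting coincidence set still belongs to $\E$. The hard part will be making these local choices cohere across all complementary intervals simultaneously and keeping the \emph{total} set $\Eq_{f,g}$ inside a single member of $\E$, uniformly as $h$ ranges over $C(E,\R)$; in sparse families this competes directly with the need for $f\r D$ to be usable in (B), so the construction of $f$ and of the extensions must be carried out with care, and it is precisely the wiggling supplied by Lemma~\ref{lem-incr-bij} that I would rely on to reconcile the two demands. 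Once (A) is established, (A) and (B) together identify the least element of $\K_\G$ with $\E$.
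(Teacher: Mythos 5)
Your reduction is correct (by Proposition~\ref{prop-2}(1) the least element of $\K_\G$ is $\{E\in\CL(\R)\!:\G\r E=C(E,\R)\}$, so (A) plus (B) would indeed suffice), and your blocking step (B) works for your $\G$. But the realization step (A) does not merely face an obstacle you can wiggle past with Lemma~\ref{lem-incr-bij}; for your choice $\G=\{g\in C(\R,\R)\!:\Eq_{f,g}\in\E\}$ it \emph{provably fails}, for every choice of the reference function $f$. Take the hereditary family $\E=\{D\in\CL(\R)\!:D\subseteq\{0,1\}\}$, let $E=\{0,1\}\in\E$, and let $h\in C(E,\R)$ satisfy $h(0)<f(0)$ and $h(1)>f(1)$. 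Any continuous $g$ with $g\r E=h$ must, by the Intermediate Value Theorem applied to $g-f$, coincide with $f$ at some $z\in(0,1)$; then $z\in\Eq_{f,g}$ but no member of $\E$ contains $z$, so $\Eq_{f,g}\notin\E$ and $g\notin\G$. Your proposed rescue cannot help: Lemma~\ref{lem-incr-bij} can relocate the forced crossing, but it must land somewhere in $(0,1)$, and $\G$-membership demands the \emph{exact} coincidence set lie in $\E$ --- heredity gives no room to absorb forced crossings by enlarging a member of $\E$. The defect is structural: any ``hard'' blocking condition formulated via coincidence sets with a single reference function collides fatally with the IVT, so with this $\G$ the least element of $\K_\G$ is a proper subfamily of $\E$ (in the example it contains $\{0\}$ and $\{1\}$ but not $\{0,1\}$).

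The paper's construction dodges precisely this by making the blocking condition ``soft,'' i.e., value-avoidance rather than coincidence control. It fixes disjoint countable dense sets $D_0,D_1\subseteq\R$ and a coloring $h\colon\R\to\{0,1\}$ with both fibers dense, and lets $\G$ consist of all $g$ such that for every closed $D\notin\E$ there is an open $U$ meeting $D$ with $g(x)\notin D_{h(x)}$ for all $x\in D\cap U$. Such $g$ may cross any test function anywhere, so the IVT obstruction evaporates; the countability of $D_0\cup D_1$ leaves ample room. Lemma~\ref{lem-incr-bij} is then used twice, in both directions of the argument: once to extend $f\r E$ (for $E\in\E$) across the complementary intervals by monotone arcs whose values avoid $D_{h(x)}$ pointwise off $E$ (realization), and once to build, for each closed $E\notin\E$, a test function taking values in $D_{h(x_n)}$ on a countable dense subset $\{x_n\}$ of $E$, which no $g\in\G$ can match on all of $E$ (blocking). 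Your (A)/(B) scheme and the instinct to invoke Lemma~\ref{lem-incr-bij} are exactly right, but they must be applied to a value-avoidance family of this kind, not to a coincidence-set family.
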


\begin{proof}
Fix disjoint countable dense sets $D_0,D_1\subseteq\R$ and
$h\colon\R\to\{0,1\}$ such that both $h^{-1}[\{0\}]$ and $h^{-1}[\{1\}]$ are dense.
Given a hereditary family $\E\subseteq\CL(\R)$, let $\G$ be
the family of all functions $g\in C(\R,\R)$ such that
$$(\forall E\in\CL(\R)\setminus\E)(\exists U\text{ open, }E\cap U\neq\emptyset)
(\forall x\in E\cap U)\ g(x)\notin D_{h(x)}.$$
We prove that $\E=E_\G(C(\R,\R))$.

Let us first show that for every $E\in\E$ and $f\in C(\R,\R)$ there exists $g\in[f\r E]$
such that $g(x)\notin D_{h(x)}$ for all $x\notin E$.
Without a loss of generality we may assume that the complement of $E$ is a disjoint union of
bounded open intervals and that the values of $f$ at the endpoints
of each of these intervals are different.
We can accomplish this by adding to $E$ an unbounded discrete set $Z\subseteq\R\setminus E$
dividing each interval adjacent to $E$, and suitably modifying the values of $f$ outside $E$
to ensure that $f(z)\notin D_{h(z)}$ for $z\in Z$ and $f(a)\neq f(b)$ for each interval
$[a,b]$ adjacent to $E\cup Z$.

Let $[a,b]$ be a closed interval adjacent to $E$.
Assume that $f(a)<f(b)$.
Let $I=[f(a),f(b)]$, $J=[a,b]$, $\{x_n\!:n\in\omega\}=(D_0\cup D_1)\cap\int I$, and for every $n$, let
$A_n=J\cap h^{-1}[\{i\}]$ where $i\in\{0,1\}$ is such that $x_n\notin D_i$.
Let $g_J\colon I\to J$ be the increasing bijection obtained in Lemma~\ref{lem-incr-bij}.
Its inverse $g_J^{-1}\colon [a,b]\to [f(a),f(b)]$ is an increasing bijection as well
and for every $x\in (a,b)$ we have $g_J^{-1}(x)\notin D_{h(x)}$.
Similarly, if $f(b)<f(a)$ then there exists a decreasing bijection
$g_J^{-1}\colon [a,b]\to [f(b),f(a)]$
such that $g_J^{-1}(x)\notin D_{h(x)}$ for all $x\in (a,b)$.
Define $g\colon\R\to\R$ by
$$g(x)=\begin{cases}f(x),&\text{if $x\in E$},\\
g^{-1}_J(x)&\text{if $J$ is a closed interval adjacent to $E$ and $x\in\int J$}.\end{cases}$$
Obviously, $g$ is continuous and $g(x)\notin D_{h(x)}$ for $x\notin E$.
We show that $g\in\G$.
Indeed, if $D\in\CL(\R)\setminus\E$ then $D\nsubseteq E$, since $\E$ is hereditary.
Let $U=\R\setminus D$.
Then $U$ is open, $E\cap U\neq\emptyset$, and $g(x)\notin D_{h(x)}$ for every $x\in E\cap U$.

We have shown that for every $E\in\E$ and $f\in C(\R,\R)$ there exists $g\in\G$ such that $g\r E=f\r E$,
hence $\E\subseteq E_\G(C(\R,\R))$.
To prove the opposite inclusion, let us take $E\in\CL(\R)\setminus\E$.
Let $\{x_n\!:n\in\omega\}$ be a countable dense subset of $E$, and
for every $n$, let $A_n=D_{h(x_n)}$.
By repeated use of Lemma~\ref{lem-incr-bij} we can find $f\in C(\R,\R)$ such that for $f(x_n)\in A_n$ for all $n$.
We show that $f\r E\notin\G\r E$ and hence $E\notin E_\G(C(\R,\R))$.

Let $g\in\G$ be arbitrary.
There exists an open set $U$ such that $E\cap U\neq\emptyset$ and $g(x)\notin D_{h(x)}$ for every $x\in E\cap U$.
Find $n$ such that $x_n\in E\cap U$.
Then $f(x_n)\in D_{h(x_n)}$ while $f(x_n)\neq g(x_n)$, hence $f\r E\neq g\r E$.
\end{proof}

\begin{theorem}
There exists $\G\subseteq C(\R,\R)$ such that\/ $\K_\G$ contains every nonempty hereditary family
$\E\subseteq\CL(\R)$.
\end{theorem}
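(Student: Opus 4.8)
The plan is to reduce the statement to realizing, inside a single lattice $\K_\G$, every \emph{co-principal} hereditary family. For a nonempty closed set $E'$ put $\H_{E'}=\{D\in\CL(\R):E'\not\subseteq D\}$; this is hereditary, and for an arbitrary hereditary $\E$ one checks directly that $D\in\E$ iff $E'\not\subseteq D$ for every $E'\in\CL(\R)\setminus\E$ (if $E'\subseteq D\in\E$ then $E'\in\E$ by heredity), whence
\[
  \E=\bigcap_{E'\in\CL(\R)\setminus\E}\H_{E'}.
\]
When $\E$ is nonempty it contains $\emptyset$, so every index $E'$ is nonempty. Since the infimum in $\K_\G$ coincides with set-theoretic intersection, $\K_\G$ is closed under arbitrary intersections; hence it suffices to build \emph{one} family $\G$ with $\H_{E'}\in\K_\G$ for every nonempty closed $E'$. (The case $\E=\CL(\R)$ is the empty intersection, i.e.\ the greatest element, which always lies in $\K_\G$.)

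Because $E_\G(\{f\})=\bigcup_{g\in\G}\CL(\Eq_{f,g})$ is exactly the hereditary family generated by the equalizers $\Eq_{f,g}$, realizing $\H_{E'}=E_\G(\{f_{E'}\})$ amounts to producing, for each nonempty closed $E'$, a \emph{witness} $f_{E'}\in C(\R,\R)$ that is $\G$-matchable on a closed set $D$ precisely when $E'\not\subseteq D$: (I) for every $D$ with $E'\not\subseteq D$ some $g\in\G$ satisfies $g\r D=f_{E'}\r D$, and (II) no $g\in\G$ satisfies $g\r E'=f_{E'}\r E'$. The construction I would try lets the witnesses \emph{define} $\G$ by deleting exactly their basic neighborhoods,
\[
  \G=C(\R,\R)\setminus\bigcup\{[f_{E'}\r E']:E'\in\CL(\R),\ E'\neq\emptyset\},
\]
for a carefully chosen family $\{f_{E'}\}$; then (II) holds by construction, and all the work is in (I).

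To make (I) tractable I would take the witnesses pairwise \emph{everywhere distinct}, so that $f_{E'}(x)\neq f_{E''}(x)$ for all $x$ whenever $E'\neq E''$. This at once disposes of every $E''\subseteq D$: a candidate $g$ with $g\r D=f_{E'}\r D$ satisfies $g\r E''=f_{E'}\r E''$, and since $E''\neq E'$ (as $E'\not\subseteq D$) the two witnesses differ at every point of $E''$, so $g\notin[f_{E''}\r E'']$ automatically. Thus only the sets $E''\not\subseteq D$ are dangerous, and for a single such $E''$ the values of $g$ on $E''\setminus D$ are free, so $g$ can be made to miss $f_{E''}$ somewhere on $E''$. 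The actual match would be manufactured by extending $f_{E'}\r D$ across $\R\setminus D$ with the interpolation Lemma~\ref{lem-incr-bij}, together with the disjoint dense sets $D_0,D_1$ and the coloring $h$ of the previous theorem, routing the values on the complementary intervals so as to spoil the dangerous full-agreements while keeping $g$ continuous and equal to $f_{E'}$ on $D$.

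The hard part is to secure (I) for \emph{all} continuum-many dangerous sets $E''$ simultaneously, i.e.\ to guarantee that the forbidden family $\bigcup[f_{E''}\r E'']$ never blocks an intended match. The difficulty is structural: non-matchability on $E'$ is a genuinely \emph{global} requirement—all of $E'$ must be covered by $D$—whereas matchability needs only a single uncovered point, so a merely dense ``bad pattern'' cannot suffice, since deleting an isolated point of $E'$ must already restore matchability. For this reason the obstruction must be the literal full-agreement encoded by the neighborhoods, and the real task is to \emph{decouple} these obstructions for different $E'$. I expect this to require choosing the witnesses by a transfinite recursion of length $\c$—equivalently, building $\G$ directly as a set of explicit matching functions $g_{D,E'}$, each forced to differ from \emph{every} witness somewhere on that witness's set—arranging at each stage, again via Lemma~\ref{lem-incr-bij}, that the equalizers $\Eq_{g,f_{E''}}$ remain small enough that no unintended $E''$ is ever fully matched. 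This bookkeeping, ensuring non-interference across all $E'$ at once, is the main obstacle.
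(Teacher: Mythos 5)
Your opening reduction is sound, and it is in fact the implicit skeleton of the paper's own argument: writing a nonempty hereditary $\E$ as $\bigcap\{\H_{E'}\!:E'\in\CL(\R)\setminus\E\}$ with $\H_{E'}=\{D\in\CL(\R)\!:E'\nsubseteq D\}$, and using that $\K_\G$ is closed under arbitrary intersections (indeed $\bigcap_i E_\G(\F_i)=E_\G\big(\bigcup_i\F_i\big)$, and the empty intersection is $\CL(\R)=E_\G(\emptyset)$), correctly reduces the theorem to producing one $\G$ together with witnesses $f_{E'}$ satisfying your (I) and (II). But from that point on you have restated the problem rather than solved it: the simultaneous verification of (I) against continuum many dangerous sets is the entire content of the theorem, and you explicitly leave it as an unresolved ``main obstacle.'' That is a genuine gap. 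Worse, your specific choice of $\G$ as the \emph{maximal} family $C(\R,\R)\setminus\bigcup\{[f_{E''}\r E'']\}$ makes (I) essentially a fresh copy of the original difficulty inside every complementary interval of $D$: your everywhere-distinctness trick disposes of every $E''$ meeting $D$ (not just $E''\subseteq D$), but for each of the continuum many nonempty closed $E''\subseteq\R\setminus D$ the extension $g$ must break full agreement with $f_{E''}$ on $E''$ using only the freedom left after prescribing $g\r D$ and continuity at the boundary; already the singletons force $g(x)\neq f_{\{x\}}(x)$ for all $x\notin D$, pairs and larger sets add patterns without end, and no mechanism for dodging them all with a single continuous function is given. (Your appeal to Lemma~\ref{lem-incr-bij} and the sets $D_0,D_1$ belongs to the preceding theorem on least elements and does no work here.)

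The paper resolves the interference not by bookkeeping over full-agreement patterns but by inverting your design decision: it takes $\G$ \emph{small} and the witnesses \emph{constant}. Enumerate the nonempty closed sets as $\{E_\alpha\!:\alpha<2^\omega\}$, fix a marked point $x_\alpha\in E_\alpha$, recursively choose pairwise distinct values $y_\alpha$ avoiding the fewer than $\c$ values $g_{\beta,n}(x_\alpha)$, $\beta<\alpha$, and let $\G$ consist only of functions $g_{\alpha,n}$ with $g_{\alpha,n}(x)=y_\alpha$ exactly for $x\notin I_{\alpha,n}$, where $\{I_{\alpha,n}\!:n\in\omega\}$ are the rational intervals meeting $E_\alpha$, subject to the forward diagonal condition $g_{\alpha,n}(x_\beta)\neq y_\beta$ for $\beta<\alpha$. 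With $f_\alpha$ the constant $y_\alpha$, your (I) becomes immediate: if $E_\alpha\nsubseteq D$, a rational interval separating a point of $E_\alpha\setminus D$ from the closed set $D$ gives $g_{\alpha,n}$ constant equal to $y_\alpha$ on all of $D$. And (II) needs only one point per adversary: $g_{\beta,n}$ with $\beta\neq\alpha$ differs from $y_\alpha$ at the single point $x_\alpha$ (by one of the two diagonal conditions), while $g_{\alpha,n}$ differs from $y_\alpha$ somewhere on $E_\alpha\cap I_{\alpha,n}$. The structural worry you raise --- that non-matchability is global while matchability is local --- simply evaporates once every member of $\G$ is constant off a small interval: such a function can fully agree with a constant witness $f_\alpha$, $\alpha\neq\beta$, only on a set inside one interval, and a single diagonalized point rules that out. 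The transfinite recursion of length $\c$ you anticipated is indeed present, but its only task is the generic choice of values avoiding fewer than $\c$ constraints at each stage, not the non-interference bookkeeping your maximal-$\G$ route would demand.
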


\begin{proof}
Let $\{E_\alpha\!:\alpha<2^\omega\}$
be a one-to-one enumeration of all nonempty closed subsets of $\R$.
For each $\alpha<2^\omega$ fix some $x_\alpha\in E_\alpha$.
Using transfinite induction for $\alpha<2^\omega$ we define $y_\alpha\in\R$
and a sequence of functions $\{g_{\alpha,n}\!:n\in\omega\}\subseteq C(\R,\R)$.

We proceed as follows.
If $y_\beta$ and $g_{\beta,n}$ are defined for all $\beta<\alpha$ and $n\in\omega$,
find $y_\alpha\notin\{y_\beta\!:\beta<\alpha\}\cup\{g_{\beta,n}(x_\alpha)\!:\beta<\alpha,\,n\in\omega\}$.
Let $\{I_{\alpha,n}\!:n\in\omega\}$ be the family of all nonempty open intervals with rational endpoints
having nonempty intersection with $E_\alpha$.
For every $n$, there exists a function $g_{\alpha,n}\in C(\R,\R)$ such that
$g_{\alpha,n}(x)=y_\alpha$ if and only if $x\notin I_{\alpha,n}$,
and $g_{\alpha,n}(x_\beta)\neq y_\beta$ for all $\beta<\alpha$.

Let $\G=\{g_{\alpha,n}\!:\alpha<2^\omega,\,n\in\omega\}$.
For every $\alpha<2^\omega$, let $f_\alpha$ be the constant function with value $y_\alpha$.
We show that $f_\alpha\r E_\alpha\notin\G\r E_\alpha$.
If $g\in\G$ then $g=g_{\beta,n}$ for some $\beta<2^\omega$ and $n\in\omega$.
If $\beta<\alpha$ then $g_{\beta,n}(x_\alpha)\neq y_\alpha$ by the definition of $y_\alpha$.
Since $x_\alpha\in E_\alpha$, we have $f_\alpha\r E_\alpha\neq g\r E_\alpha$.
If $\beta=\alpha$ then there exists $x\in E_\alpha\cap I_{\alpha,n}$.
We have $g_{\alpha,n}(x)\neq y_\alpha$, hence $f_\alpha\r E_\alpha\neq g\r E_\alpha$.
Finally, if $\beta>\alpha$ then $g_{\beta,n}(x_\alpha)\neq y_\alpha$ by the definition of $g_{\beta,n}$.
Again, $f_\alpha\r E_\alpha\neq g\r E_\alpha$.

Let $\E$ be a nonempty hereditary family of closed subsets of $\R$.
Then $\emptyset\in\E$, hence each $E\in\CL(\R)\setminus\E$ is nonempty.
Denote $\F=\{f_\alpha\!:E_\alpha\in\CL(\R)\setminus\E\}$.
If $E\in\E$ and $f\in\F$ then $f=f_\alpha$ for some $\alpha<2^\omega$ such that $E_\alpha\in\CL(\R)\setminus\E$,
hence $E_\alpha\nsubseteq E$.
There exists $n$ such that $E\subseteq\R\setminus I_{\alpha,n}$.
By the definition of $g_{\alpha,n}$ we have $f_\alpha\r E=g_{\alpha,n}\r E$,
hence $f_\alpha\r E\in\G\r E$.
It follows that $E\in E_\G(\F)$, and we conclude that $\E\subseteq E_\G(\F)$.
Conversely, if $E\in\CL(R)\setminus\E$ then $E=E_\alpha$ for some $\alpha<2^\omega$.
Since $f_\alpha\r E_\alpha\notin\G\r E_\alpha$ and $f_\alpha\in\F$, we have $E\notin E_\G(\F)$.
It follows that $\E=E_\G(\F)$, hence $\E\in\K_\G$.
\end{proof}

\section{Results for family $\G=\{g\}$}

We show that if $\G$ is a singleton then the lattice $\K_\G$ is isomorphic to the complete lattice
$(\CL(\R),\subseteq)$ of all closed subsets of~$\R$.
Let us note that in $(\CL(\R),\subseteq)$ we have $\bigwedge\E=\bigcap\E$ and
$\bigvee\E=\cl\big(\bigcup\E\big)$, for any $\E\subseteq \CL(\R)$.

\begin{theorem} \label{thm-singleton}
Let $\G=\{g\}$, $g\in C(\R,\R)$.
Then $\K_\G=\{\CL(E)\!:E\in \CL(\R)\}$ and $\L_\G=\{[g\r E]\!:E\in \CL(\R)\}$.
\end{theorem}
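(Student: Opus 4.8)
The plan is to exploit that for a singleton the formulas for $E_\G$ and $F_\G$ collapse the union over $\G$ to a single term. First I would record that $E_{\{g\}}(\F)=\bigcap_{f\in\F}\CL(\Eq_{f,g})$ and prove the elementary identity $\bigcap_{f\in\F}\CL(\Eq_{f,g})=\CL\big(\bigcap_{f\in\F}\Eq_{f,g}\big)$: a closed set $D$ lies in the left-hand side iff $D\subseteq\Eq_{f,g}$ for every $f\in\F$, iff $D$ is a closed subset of the intersection $\bigcap_{f\in\F}\Eq_{f,g}$. Since each $\Eq_{f,g}=(f-g)^{-1}[\{0\}]$ is closed, so is this intersection, and therefore $E_{\{g\}}(\F)=\CL(E)$ for the closed set $E=\bigcap_{f\in\F}\Eq_{f,g}$. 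As the elements of $\K_{\{g\}}$ are exactly the sets of the form $E_{\{g\}}(\F)$, this shows $\K_{\{g\}}\subseteq\{\CL(E)\!:E\in\CL(\R)\}$.

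For the reverse inclusion I would show every $\CL(E)$ is attained. Given a closed set $E$, the function $d_E(x)=\operatorname{dist}(x,E)$ is continuous and vanishes exactly on $E$; putting $f=g+d_E$ yields $\Eq_{f,g}=E$, whence $E_{\{g\}}(\{f\})=\CL(E)$. Combined with the previous paragraph this establishes $\K_{\{g\}}=\{\CL(E)\!:E\in\CL(\R)\}$.

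For $\L_\G$ I would invoke the lattice isomorphism $\E\mapsto F_\G(\E)$ from $\K_\G$ onto $\L_\G$ recorded in the introduction, which is a bijection; hence it suffices to evaluate $F_{\{g\}}$ on the sets $\CL(E)$ just identified. Since $[g\r D]$ is inclusion-reversing in $D$ and $E$ is the largest member of $\CL(E)$, the intersection $F_{\{g\}}(\CL(E))=\bigcap_{D\in\CL(E)}[g\r D]$ collapses to $[g\r E]$: the inclusion $[g\r E]\subseteq\bigcap_{D}[g\r D]$ holds by monotonicity, while $E\in\CL(E)$ gives the reverse. Letting $E$ range over $\CL(\R)$ then yields $\L_{\{g\}}=\{[g\r E]\!:E\in\CL(\R)\}$.

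The only substantive points are the intersection identity $\bigcap_f\CL(\Eq_{f,g})=\CL(\bigcap_f\Eq_{f,g})$ and the observation, via the distance function, that every closed set arises as an equalizer $\Eq_{f,g}$; the remainder is a direct translation of the definitions, so I expect no genuine obstacle beyond verifying these two facts carefully.
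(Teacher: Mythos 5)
Your proof is correct, and for the $\K_\G$ half it coincides with the paper's: the same collapse $E_{\{g\}}(\F)=\bigcap_{f\in\F}\CL(\Eq_{f,g})=\CL\big(\bigcap_{f\in\F}\Eq_{f,g}\big)$, and the same realization of every closed $E$ as an equalizer $\Eq_{f,g}$ --- the paper merely asserts the existence of such an $f$, while your $f=g+d_E$ makes it explicit (one trivial caveat: for $E=\emptyset$ the distance function is not real-valued, so take e.g.\ $f=g+1$ there). The $\L_\G$ half is where you genuinely diverge. The paper computes $F_{\{g\}}$ directly on an \emph{arbitrary} family $\E\subseteq\CL(\R)$, proving the identity $F_{\{g\}}(\E)=\bigcap_{E\in\E}[g\r E]=\big[g\r\cl\big(\bigcup\E\big)\big]$, which requires a small analytic step: a continuous function agreeing with $g$ on each member of $\E$ agrees with $g$ on $\bigcup\E$ and hence, by continuity, on its closure. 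You instead invoke the abstract fact recorded in the introduction that $\E\mapsto F_\G(\E)$ maps $\K_\G$ bijectively onto $\L_\G$, so that $F_{\{g\}}$ only needs to be evaluated on the hereditary families $\CL(E)$ already identified, where the intersection collapses to $[g\r E]$ for purely formal reasons ($E$ is the largest member of $\CL(E)$, and restriction to subsets is monotone). Your route is slightly more economical and needs no continuity argument in this half; the paper's route buys more, namely an explicit closed-form expression for $F_{\{g\}}(\E)$ on every $\E$, not just a description of the image $\L_{\{g\}}$. Both arguments are complete and correct.
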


\begin{proof}
It is clear that $(f,E)\in R_\G$ if and only if $E\subseteq\Eq_{f,g}$,
for any $f\in C(\R,\R)$ and $E\in \CL(\R)$.
Hence, for any $\F\subseteq C(\R,\R)$ we have
\begin{displaymath}
  E_\G(\F)=
  \bigcap_{f\in\F}\CL(\Eq_{f,g})=
  \CL\left(\textstyle\bigcap\nolimits_{f\in\F}\Eq_{f,g}\right).
\end{displaymath}
Since for any set $E\in \CL(\R)$ there exists $f\in C(\R,\R)$ such that $E=\Eq_{f,g}$, we obtain that
$\K_\G=\{E_\G(\F)\!:\F\subseteq C(\R,\R)\}=\{\CL(E)\!:E\in \CL(\R)\}$. 
For any $\E\subseteq \CL(\R)$ we also have
\begin{displaymath}
  F_\G(\E)=
  \bigcap_{E\in\E}[g\r E]=
  \big[g\r \cl\big(\textstyle\bigcup\E\big)\big],
\end{displaymath}
hence $\L_\G=\{F_\G(\E)\!:\E\subseteq \CL(\R)\}=\{[g\r E]\!:E\in \CL(\R)\}$.
\end{proof}

It can be easily seen that if $\G=\{g\}$ then each element $\E\in \K_\G$ can be generated by a family
consisting of a single function $f$:
if $\E\in \K_\G$ then there exists $E\in \CL(\R)$ such that $\E=\CL(E)$,
for any $f\in C(\R,\R)$ satisfying $\Eq_{f,g}=E$ we then have $\E=E_\G(\{f\})$.
Similarly, each $\F\in \L_\G$ can be expressed as
$F_\G(\{E\})$ for some $E\in \CL(\R)$.
Moreover, this set $E$ is unique; if $D,E\in \CL(\R)$ are distinct then
$F_\G(\{D\})\neq F_\G(\{E\})$ by the normality of $\R$.

The next two results allows us to characterize families $\G\subseteq C(\R,\R)$
for which the lattice $\K_\G$ is the same as in Theorem~\ref{thm-singleton}.

\begin{theorem} \label{thm-contains-all-C(E)}
Let $\G\subseteq C(\R,\R)$ be nonempty.
The following conditions are equivalent.
\begin{enumerate}[\rm (1)]
  \item $\{\CL(E)\!:E\in\CL(\R)\}\subseteq\K_\G$.
  \item The least element of $\K_\G$ is $\{\emptyset\}$.
  \item For each $x\in\R$, $\G[x]\neq\R$.
\end{enumerate}
\end{theorem}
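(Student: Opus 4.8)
The plan is to establish the cycle of implications $(3)\Rightarrow(1)\Rightarrow(2)\Rightarrow(3)$. The two implications $(1)\Rightarrow(2)$ and $(2)\Rightarrow(3)$ are short bookkeeping arguments that use the machinery already set up, while $(3)\Rightarrow(1)$ carries the real content and is where I would concentrate the effort.

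For $(3)\Rightarrow(1)$, fix an arbitrary $E\in\CL(\R)$ together with some $g_0\in\G$ (possible since $\G\neq\emptyset$). The claim is that $\CL(E)=E_\G([g_0\r E])$, which immediately places $\CL(E)$ in $\K_\G$. The inclusion $\CL(E)\subseteq E_\G([g_0\r E])$ is automatic: if $D\subseteq E$, then every $f\in[g_0\r E]$ satisfies $f\r D=g_0\r D$, so $g_0\in\G$ witnesses $(f,D)\in R_\G$. For the reverse inclusion I must show that any $D\nsubseteq E$ is excluded, and this is the only place where hypothesis (3) is used. Pick $x\in D\setminus E$ and, by (3), choose a value $c_x\in\R\setminus\G[x]$; note $c_x\neq g_0(x)$ since $g_0\in\G$. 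As $\{x\}$ and $E$ are disjoint closed sets, the function that equals $g_0$ on $E$ and takes value $c_x$ at $x$ is well defined and continuous on $E\cup\{x\}$ (the point $x$ being isolated there), so Tietze's theorem yields $f\in C(\R,\R)$ with $f\r E=g_0\r E$ and $f(x)=c_x$; thus $f\in[g_0\r E]$. Because $c_x\notin\G[x]$, no $g\in\G$ agrees with $f$ at $x$, hence $D\nsubseteq\Eq_{f,g}$ for every $g\in\G$, giving $D\notin E_\G(\{f\})\supseteq E_\G([g_0\r E])$. Therefore $E_\G([g_0\r E])=\CL(E)$, and $\CL(E)\in\K_\G$ for every closed $E$.

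For $(1)\Rightarrow(2)$ I would take $E=\emptyset$ in (1), so that $\CL(\emptyset)=\{\emptyset\}\in\K_\G$; since $\G\neq\emptyset$, every element of $\K_\G$ contains $\emptyset$, so the least element both contains $\emptyset$ and is contained in $\{\emptyset\}$, whence it equals $\{\emptyset\}$. For $(2)\Rightarrow(3)$, Proposition~\ref{prop-2}(1) identifies the least element as $\{E\in\CL(\R):\G\r E=C(E,\R)\}$, so (2) asserts that this family is $\{\emptyset\}$; applying it to the nonempty singleton $\{x\}$ forces $\G\r\{x\}\neq C(\{x\},\R)$, which is precisely $\G[x]\neq\R$ after identifying $C(\{x\},\R)$ with $\R$. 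The main obstacle is the realization step in $(3)\Rightarrow(1)$: one needs a single continuous function pinned to $g_0$ on all of $E$ yet taking a forbidden value at the stray point $x$, and the crux is exactly that condition (3) guarantees such a forbidden value exists, with continuity then supplied by Tietze extension.
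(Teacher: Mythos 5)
Your proposal is correct and takes essentially the same approach as the paper: the same cycle of implications, and the substantive step $(3)\Rightarrow(1)$ uses exactly the paper's idea of functions agreeing with a fixed $g_0\in\G$ on $E$ while taking a forbidden value outside $\G[x]$ at a point $x\notin E$. Your only (cosmetic) deviations are generating $\CL(E)$ as $E_\G([g_0\r E])$ rather than via the paper's handpicked subfamily $\{f_x\!:x\notin E\}\subseteq[g_0\r E]$, and spelling out the Tietze extension that the paper leaves implicit.
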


\begin{proof}
$(1) \Rightarrow (2)$ is trivial.

$(2) \Rightarrow (3)$.
If $E_\G(C(\R,\R))=\{\emptyset\}$ then for each nonempty $E\in \CL(\R)$ there exists $f\in C(\R,\R)$ such that $f\r E\notin\G\r E$.
In particular, for every $x\in\R$ there exists $y\in\R$ such that
if $f(x)=y$ then $f\r\{x\}\notin\G\r\{x\}$, hence $y\notin\G[x]$.

$(3) \Rightarrow (1)$.
Fix a function $h\in\G$.
For every $E\in \CL(\R)$ and $x\notin E$ let us take $y\notin\G[x]$ and
a function $f_x\in [h\r E]$ such that $f_x(x)=y$.
Let $\F=\{f_x\!:x\notin E\}$.
If $D\in E_\G(\F)$ then for any $x\notin E$ we have $f_x\r D\in\G\r D$, hence $x\notin D$.
It follows that $D\subseteq E$ and thus $E_\G(\F)\subseteq\CL(E)$.
The opposite inclusion is clear, hence we obtain $\CL(E)=E_\G(\F)\in \K_\G$.
\end{proof}

\begin{theorem} \label{thm-is-contained-in-all-C(E)}
Let $\G\subseteq C(\R,\R)$ be nonempty.
The following conditions are equivalent.
\begin{enumerate}[\rm (1)]
  \item $\K_\G\subseteq\{\CL(E)\!:E\in\CL(\R)\}$.
  \item There exist $h_1,h_2\in C(\R,\R^*)$ such that\/
    $\G=\{g\in C(\R,\R)\!:h_1\le g\le h_2\}$.
\end{enumerate}
\end{theorem}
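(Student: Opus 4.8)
The plan is to pin down the only possible candidates, namely $h_1(x)=\inf\G[x]$ and $h_2(x)=\sup\G[x]$ regarded as functions $\R\to\R^*$, and to prove $(1)\iff\G=\{g:h_1\le g\le h_2\}$ for these. Observe at the outset that $\G\subseteq\{g:h_1\le g\le h_2\}$ is immediate, that $h_2$ is lower semicontinuous and $h_1$ upper semicontinuous as a pointwise $\sup$ resp. $\inf$ of continuous functions, and that nonemptiness of $\G$ forces $h_1<\infty$ and $h_2>-\infty$ everywhere.

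For $(2)\Rightarrow(1)$ I would argue directly. Given $\G=\{g:h_1\le g\le h_2\}$ and $f\in C(\R,\R)$, set $E_f=\{x:h_1(x)\le f(x)\le h_2(x)\}$, a closed set, and consider the clamp $g=\max(h_1,\min(h_2,f))$. One checks that $g$ is continuous and real-valued (near a point where $h_1$ or $h_2$ is infinite the clamp locally coincides with $f$), that $g\in\G$, and that $g\r E_f=f\r E_f$; conversely any member of $\G$ agreeing with $f$ on a closed set forces that set into $E_f$. Hence $E_\G(\{f\})=\CL(E_f)$, and since $\CL(A)\cap\CL(B)=\CL(A\cap B)$ we get $E_\G(\F)=\CL\big(\bigcap_{f\in\F}E_f\big)$ for every $\F$, so every element of $\K_\G$ has the form $\CL(E)$.

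The substance is $(1)\Rightarrow(2)$. First I would reduce $(1)$ to single functions: using $\CL(A)\cap\CL(B)=\CL(A\cap B)$ again, $(1)$ is equivalent to $E_\G(\{f\})=\CL(E_f)$ for every $f$, where $E_f=\bigcup_{g\in\G}\Eq_{f,g}=\{x:f(x)\in\G[x]\}$, and this is in turn equivalent to the conjunction of (a) $E_f$ is closed and (b) some $g^*\in\G$ satisfies $g^*\r E_f=f\r E_f$. The whole theorem then reduces to the \emph{fiber identity} $\G[x]=\{t\in\R:h_1(x)\le t\le h_2(x)\}$ for every $x$: granting this, any continuous $g$ with $h_1\le g\le h_2$ satisfies $g(x)\in\G[x]$ for all $x$, so $E_g=\R\in E_\G(\{g\})$ and $g\in\G$. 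To approach the fiber identity I would first use (a) to show each $\G[x]$ is closed: given $v_n\in\G[x]$ with $v_n\to v$, pick $g_n\in\G$ with $g_n(x)=v_n$ and points $x_n\to x$ with $g_n(x_n)$ within $1/n$ of $v_n$, interpolate a continuous $f$ through the points $(x_n,g_n(x_n))$ and $(x,v)$; then $x_n\in E_f\to x$ forces $x\in E_f$, i.e. $v\in\G[x]$.

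The main obstacle is to rule out \emph{gaps} in the fibers, and here I expect to use (b) through an intermediate‑value argument. Suppose, for contradiction, that on some open interval $N$ one has $h_1(x)<v<h_2(x)$ yet $v\notin\G[x]$ for all $x\in N$. Choose $x_1<x_2$ in $N$ and $g_1,g_2\in\G$ with $g_1(x_1)>v>g_2(x_2)$ (possible since $v<h_2(x_1)$ and $v>h_1(x_2)$), and let $f$ agree with $g_1$ on $(-\infty,x_1]$, with $g_2$ on $[x_2,\infty)$, and interpolate in between; then $(-\infty,x_1]\cup[x_2,\infty)\subseteq E_f$, so by (b) some $g^*\in\G$ agrees with $f$ there, whence $g^*(x_1)>v>g^*(x_2)$ gives $g^*(x')=v$ for some $x'\in(x_1,x_2)\subseteq N$, contradicting $g^*(x')\in\G[x']\not\ni v$. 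Consequently no open subinterval of $W_v:=\{x:h_1(x)<v<h_2(x)\}$ is disjoint from $G_v:=\{x:v\in\G[x]\}$; since $G_v$ is the closed set $E_f$ for the constant $f\equiv v$ and $W_v$ is open, this forces $W_v\subseteq G_v$, which together with fiber‑closedness yields the fiber identity. Finally I would upgrade the semicontinuity of $h_1,h_2$ to continuity by the same device: were $h_2$ not upper semicontinuous at $x_0$, a value $c$ just above $h_2(x_0)$ would satisfy $h_1<c<h_2$ at points $x_n\to x_0$, hence $c\in\G[x_n]$ by the fiber identity, while $c\notin\G[x_0]$, contradicting closedness of $G_c$; the argument for $h_1$ is symmetric. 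Thus $h_1,h_2\in C(\R,\R^*)$ and $\G=\{g:h_1\le g\le h_2\}$, completing the proof.
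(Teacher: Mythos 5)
Your proof is correct, and although it shares the paper's skeleton---identifying $h_1(x)=\inf\G[x]$ and $h_2(x)=\sup\G[x]$ as the only candidates, getting one semicontinuity of each for free from the $\inf$/$\sup$ representation, excluding gaps in the fibers by an intermediate-value argument, and using the clamp $\max(h_1,\min(h_2,f))$ for $(2)\Rightarrow(1)$---the way condition (1) is exploited is genuinely different. The paper's central structural step is to prove that $H=\bigcup\G$ is \emph{closed in $\R^2$} (interpolating a continuous function through a convergent sequence of graph points), and then to work with planar boxes $(a,b)\times(c,d)$ disjoint from $H$, two-point sets $\{a,b\}\in\E$, and the IVT; closedness of $H$ also drives the remaining semicontinuity of $h_1,h_2$. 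You instead first reduce (1) to a per-function normal form---$E_\G(\{f\})=\CL(E_f)$ with $E_f=\{x\!:f(x)\in\G[x]\}$, i.e.\ the conjunction of (a) $E_f$ is closed and (b) a single $g^*\in\G$ agrees with $f$ on all of $E_f$---a clean reformulation the paper never makes explicit, and then stay entirely one-dimensional: closedness of the fibers $\G[x]$ and of the level sets $G_v=E_{f\equiv v}$ replaces closedness of $H$; the gap is excluded by a witness agreeing with $f$ on two half-lines (rather than at two points near a box around a gap point) followed by the IVT; and the missing semicontinuity of $h_1,h_2$ is recovered from the fiber identity plus closedness of $G_c$ rather than from planar closedness. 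The details check out: the sliding trick $f(x_n)=g_n(x_n)\in\G[x_n]$ in the fiber-closedness argument is exactly what is needed (prescribing $f(x_n)=v_n$ would not put $x_n$ into $E_f$), and in the continuity upgrade you correctly use the already-known semicontinuities to guarantee $h_1(x_n)\le c\le h_2(x_n)$ before invoking the fiber identity at $x_n$. What your route buys is the avoidance of any planar topology, everything being phrased through the closed sets $E_f$ that (1) hands you directly; what the paper's route buys is that the single closed-graph lemma about $H$ makes the interval structure and continuity of the envelopes nearly immediate afterwards. Two small points to tidy: the parenthetical in your $(2)\Rightarrow(1)$ is slightly off, since near a point with $h_1(x)=-\infty$ but $h_2$ finite the clamp locally coincides with $\min(h_2,f)$ rather than with $f$ (real-valuedness and continuity of the clamp nonetheless hold because $h_1<\infty$ and $h_2>-\infty$ everywhere, as you note); and in the reduction of (1) to (a)+(b) you should record the one-line verification that the closed set $E$ with $E_\G(\{f\})=\CL(E)$ must equal $E_f$: every singleton $\{x\}$ with $x\in E_f$ lies in $\CL(E)$, so $E_f\subseteq E$, while $E\in\CL(E)$ itself admits a witness in $\G$, so $E\subseteq E_f$.
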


\begin{proof}
$(1) \Rightarrow (2)$.
Denote $H=\bigcup\G$.
Let us first show that $H$ is a closed subset of $\R^2$.
Assume that $(x,y)\in\cl H$.
Since $\G$ is a nonempty family of continuous functions,
there exists in $H$ a sequence of points $\{(x_n,y_n)\!:n\in\omega\}$ converging to $(x,y)$ and such that
all $x_n$ are distinct.
Let $f\in C(\R,\R)$ be such that $f(x_n)=y_n$ for every $n$ and $f(x)=y$.
Then $\E=E_\G(\{f\})$ contains $\{x_n\}$ for every $n$.
Since $\E\in\K_\G$, it follows from 1 that $\cl\{x_n\!:n\in\omega\}\in\E$, hence $\{x\}\in\E$ and so $(x,y)\in H$.

We show that $\G[x]=\{g(x)\!:g\in\G\}$ is connected, for every $x\in\R$.
Otherwise we can find $g_1,g_2\in\G$ and $y\notin\G[x]$ such that $g_1(x)<y<g_2(x)$.
Since $H$ is closed, there exist $a,b,c,d\in\R$ such that $x\in(a,b)$, $y\in (c,d)$, and
$\big((a,b)\times (c,d)\big)\cap H=\emptyset$.
Let $f\in C(\R,\R)$ be such that $f(a)=g_1(a)$ and $f(b)=g_2(b)$.
For $\E=E_\G(\{f\})$ we have $\{a\}\in\E$, $\{b\}\in\E$, hence also $\{a,b\}\in\E$,
and thus there exists $g\in\G$ such that $g(a)=g_1(a)$ and $g(b)=g_2(b)$.
By Intermediate Value Theorem there is $z\in (a,b)$ such that $g(z)\in (c,d)$, which
contradicts the assumption that $\big((a,b)\times(c,d)\big)\cap H$ is empty.
So $\G[x]$ is a connected closed set, that is, a closed interval.

For every $x\in\R$ denote $h_1(x)=\inf\G[x]$ and $h_2(x)=\sup\G[x]$.
We show that $h_1,h_2$ are continuous.
If $y<h_1(x)$ then there exist $a,b,c,d\in\R$ such that $x\in(a,b)$, $y\in(c,d)$, and
$\big((a,b)\times(c,d)\big)\cap H=\emptyset$.
It follows $\big((a,b)\times(-\infty,d)\big)\cap H=\emptyset$,
otherwise one could find a contradiction using Intermediate Value Theorem, as before.
We can conclude that $h_1$ is lower semi-continuous.
Since $h_1$ is the infimum of a family of continuous functions, it is also upper semi-continuous,
and hence continuous.
A similar argument shows the continuity of $h_2$.

It remained to show that $\G=\{g\in C(\R,\R)\!:h_1\le g\le h_2\}$.
The inclusion from left to right is clear.
If $g\in C(\R,\R)$ is such that $h_1\le g\le h_2$, then for every $x\in\R$ we have $g(x)\in\G[x]$,
hence $\{x\}\in\E$ where $\E=E_\G(\{g\})$.
By (1), $\R=\bigcup\E\in\E$, hence $g\in\G$.

$(2) \Rightarrow (1)$.
Let $\E\in\K_\G$, that is, $\E=E_\G(\F)$ for some $\F\subseteq C(\R,\R)$, and let $E=\bigcup\E$.
Then $E=\{x\in\R\!:\F[x]\subseteq\G[x]\}$.
First, let us show that $E\in\CL(\R)$.
If $x\in\cl E$ then there exists a sequence $\{x_n\!:n\in\N\}$ in $E$ such that $x_n\to x$.
For any $y\in\F[x]$, let us take some $f\in\F$ such that $f(x)=y$.
For every $n$ we have $f(x_n)\in\F(x_n)\subseteq\G(x_n)$, hence $h_1(x_n)\le f(x_n)\le h_2(x_n)$.
By the continuity of $f$, $h_1$, and $h_2$ we obtain that $h_1(x)\le f(x)\le h_2(x)$, hence $y\in\G[x]$.
We have $\F[x]\subseteq\G[x]$, hence $x\in E$, and it follows that $E$ is closed.

We show that for every $f\in\F$ there exists $g\in\G$ such that $f\r E=g\r E$.
Fix some $f\in\F$.
For every $x\in E$ we have $f(x)\in\G[x]$, hence $h_1(x)\le f(x)\le h_2(x)$.
Let $g(x)=\min\{\max\{f(x),h_1(x)\},h_2(x)\}$, for all $x\in\R$.
Clearly, $g$ is continuous and $g\r E=f\r E$.
Since $\G$ is nonempty, we have $h_1\le h_2$, and hence also $h_1\le g\le h_2$.
By (2), we have $g\in\G$.
It follows that $E\in\E$, hence $\E=\CL(E)$.
\end{proof}

Now we can characterize those families $\G$ for which
$\K_\G=\{\CL(E)\!:E\in\CL(\R)\}$.
Let us recall that for $f,h\in C(\R,\R^*)$ we denoted $[f,h]=\{g\in C(\R,\R)\!:f\le g\le h\}$,
where $f\le g$ if and only if $(\forall x\in\R)\, f(x)\le g(x)$.

\begin{corollary}
Let $\G\subseteq C(\R,\R)$.
The following conditions are equivalent.
\begin{enumerate}[\rm (1)]
\item $\K_\G=\{\CL(E)\!:E\in\CL(\R)\}$.
\item There exist $f,h\in C(\R,\R^*)$ such that $f\le h$,
$f^{-1}[\R]\cup h^{-1}[\R]=\R$, and $\G=[f,h]$.
\end{enumerate}
\end{corollary}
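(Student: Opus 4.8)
The plan is to read the equality in (1) as the conjunction of the two inclusions $\{\CL(E):E\in\CL(\R)\}\subseteq\K_\G$ and $\K_\G\subseteq\{\CL(E):E\in\CL(\R)\}$ and to translate each through Theorems~\ref{thm-contains-all-C(E)} and \ref{thm-is-contained-in-all-C(E)}. Note first that both (1) and (2) force $\G\neq\emptyset$: if $\G=\emptyset$ then $\K_\emptyset=\{\emptyset,\CL(\R)\}$ by Proposition~\ref{prop-1}, which omits sets such as $\CL(\{0\})=\{\emptyset,\{0\}\}$, so (1) fails, and (2) will fail once I show below that the band $[f,h]$ is nonempty whenever $f\le h$ and $f^{-1}[\R]\cup h^{-1}[\R]=\R$. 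Hence I may assume $\G\neq\emptyset$, which is exactly the hypothesis of the two theorems, and identify the pair $(f,h)$ of (2) with the pair $(h_1,h_2)$ produced by Theorem~\ref{thm-is-contained-in-all-C(E)}.

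The crux is a pointwise description of $\G[x]$ for $\G=[h_1,h_2]$ with $h_1\le h_2$ and $\G\neq\emptyset$: namely $\G[x]=\R$ exactly when $h_1(x)=-\infty$ and $h_2(x)=+\infty$, so that $\G[x]\neq\R$ precisely when at least one of $h_1(x),h_2(x)$ is finite. The easy half is a bound: if $h_2(x)\in\R$ then every $g\in\G$ has $g(x)\le h_2(x)$, so $\G[x]\subseteq(-\infty,h_2(x)]\neq\R$, and symmetrically if $h_1(x)\in\R$. The hard part will be the converse. Given $y\in\R$ I would produce $g\in\G$ with $g(x)=y$ as the pointwise median $g=\max(\min(h_1,y),\min(y,h_2),\min(h_1,h_2))$ of $h_1$, the constant $y$, and $h_2$. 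Since $h_1\le h_2$ this median is clamped into $[h_1,h_2]$ at every point, and it equals $y$ at $x$ because $h_1(x)=-\infty$ and $h_2(x)=+\infty$; the remaining work is to check that $g$ is real-valued and continuous. Here $\G\neq\emptyset$ is used: picking any $g_0\in\G$ shows $h_1<+\infty$ and $h_2>-\infty$ everywhere, whence the middle term $\min(y,h_2)$ keeps the median above $-\infty$ and the other two terms keep it below $+\infty$, so the median is finite; being a finite-valued combination of continuous $\R^*$-valued minima and maxima, it is continuous. The very same median, taken with an arbitrary continuous $g_0$ in place of the constant $y$ and using $f^{-1}[\R]\cup h^{-1}[\R]=\R$ to guarantee finiteness, exhibits a member of $[f,h]$ and so settles the nonemptiness promised in the first paragraph.

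With the lemma available the two directions are immediate. For (1)$\Rightarrow$(2), Theorem~\ref{thm-is-contained-in-all-C(E)} applied to $\K_\G\subseteq\{\CL(E):E\in\CL(\R)\}$ yields $h_1,h_2\in C(\R,\R^*)$ with $\G=[h_1,h_2]$, and $\G\neq\emptyset$ gives $h_1\le h_2$; Theorem~\ref{thm-contains-all-C(E)} applied to $\{\CL(E):E\in\CL(\R)\}\subseteq\K_\G$ gives $\G[x]\neq\R$ for all $x$, which by the lemma is exactly $h_1^{-1}[\R]\cup h_2^{-1}[\R]=\R$; thus $f=h_1$, $h=h_2$ witness (2). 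For (2)$\Rightarrow$(1), the hypotheses make $\G=[f,h]$ nonempty, Theorem~\ref{thm-is-contained-in-all-C(E)} gives $\K_\G\subseteq\{\CL(E):E\in\CL(\R)\}$, and $f^{-1}[\R]\cup h^{-1}[\R]=\R$ forces $\G[x]\neq\R$ at every $x$ by the bound half of the lemma, so Theorem~\ref{thm-contains-all-C(E)} gives the reverse inclusion; together these yield the equality in (1). I expect the only genuine obstacle to be the finiteness-and-continuity verification of the median in the hard half of the lemma.
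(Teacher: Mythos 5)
Your proposal is correct and takes essentially the same approach as the paper: the corollary appears there without proof, precisely because it is the conjunction of Theorems~\ref{thm-contains-all-C(E)} and \ref{thm-is-contained-in-all-C(E)}, which is exactly your decomposition into the two inclusions. The clamp/median lemma you interpolate --- that for a nonempty band $\G=[f,h]$ one has $\G[x]\neq\R$ if and only if $f(x)\in\R$ or $h(x)\in\R$, proved via $g=\min(\max(f,y),h)$ --- is just the routine glue the paper leaves implicit, and your finiteness and continuity checks for it are sound.
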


It follows that the same lattice $\K_\G$ is obtained for families $\G$ of the form
$(-\infty,g\kern.5pt]=\{f\in C(\R,\R)\!:f\le g\}$ and $[\kern.5ptg,\infty)=\{f\in C(\R,\R)\!:g\le f\}$.

\begin{corollary}
Let $g\in C(\R,\R)$.
Then $\K_{(-\infty,g]}=\K_{[g,\infty)}=\{\CL(E)\!:E\in\CL(\R)\}$.
\end{corollary}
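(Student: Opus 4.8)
The plan is to derive this corollary directly from the preceding Corollary, which characterizes the families $\G$ satisfying $\K_\G=\{\CL(E)\!:E\in\CL(\R)\}$ as exactly those of the form $\G=[f,h]$ for some $f,h\in C(\R,\R^*)$ with $f\le h$ and $f^{-1}[\R]\cup h^{-1}[\R]=\R$. Thus it suffices to exhibit each of the two families $(-\infty,g]$ and $[g,\infty)$ as a closed interval $[f,h]$ of this special form, after which both equalities follow at once.

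For $(-\infty,g]$ I would take $f$ to be the constant function $-\infty$ and $h=g$. Then $f\le h$ holds since $-\infty\le g$, and because $g$ is real-valued everywhere while the constant $-\infty$ attains no real value, we have $f^{-1}[\R]=\emptyset$ and $h^{-1}[\R]=\R$, so that $f^{-1}[\R]\cup h^{-1}[\R]=\R$. It then remains only to check the identification $[-\infty,g]=(-\infty,g]$: by definition $[-\infty,g]=\{k\in C(\R,\R)\!:-\infty\le k\le g\}$, and the lower inequality $-\infty\le k$ is automatic for every real-valued $k$, whence this set equals $\{k\in C(\R,\R)\!:k\le g\}=(-\infty,g]$. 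The preceding Corollary then yields $\K_{(-\infty,g]}=\{\CL(E)\!:E\in\CL(\R)\}$.

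The family $[g,\infty)$ is handled symmetrically, by taking $f=g$ and letting $h$ be the constant function $\infty$. Here $f^{-1}[\R]=\R$ and $h^{-1}[\R]=\emptyset$, the inequality $g\le\infty$ holds, and $[g,\infty]=\{k\in C(\R,\R)\!:g\le k\le\infty\}=\{k\in C(\R,\R)\!:g\le k\}=[g,\infty)$ since the upper bound is vacuous. A second application of the preceding Corollary gives $\K_{[g,\infty)}=\{\CL(E)\!:E\in\CL(\R)\}$.

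There is no genuine obstacle in this argument; the only point requiring a moment's care is the identification of the half-open families $(-\infty,g]$ and $[g,\infty)$ with honest closed intervals in $C(\R,\R^*)$. This rests on the elementary observation that a real-valued continuous function can never attain $\pm\infty$, so that an endpoint at infinity behaves identically whether it is regarded as open or closed. Once this identification is in place, both equalities are immediate consequences of the characterization already established.
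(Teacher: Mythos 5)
Your argument is correct and coincides with the paper's own (implicit) derivation: the corollary is stated there as an immediate consequence of the preceding characterization, obtained exactly as you do by writing $(-\infty,g]=[-\infty,g]$ and $[g,\infty)=[g,\infty]$ with constant endpoints $\mp\infty$ in $C(\R,\R^*)$. Your verification of the hypotheses $f\le h$ and $f^{-1}[\R]\cup h^{-1}[\R]=\R$, and of the harmlessness of the infinite endpoint for real-valued functions, is precisely the needed check and is carried out correctly.
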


\section{Results for family $\G=(g,\infty)$}

Recall that for $f,h\in C(\R,\R^*)$, $(f,h)=\{g\in C(\R,\R)\!:f<g<h\}$
where $f<g$ is a shorthand for $(\forall x\in\R)\,f(x)<g(x)$.

\begin{theorem} \label{thm-strictly-below-1}
Let $\G=(g,\infty)$ where $g\in C(\R,\R)$.
Then $\K_\G=\{\CL(\R)\cap\P(X)\!:X\subseteq\R\}$.
\end{theorem}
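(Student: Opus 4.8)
The plan is to reduce membership in the relation $R_\G$ to a single pointwise inequality and then read off the closure operator directly. Concretely, I would first prove the characterization that for $f\in C(\R,\R)$ and $E\in\CL(\R)$ one has $(f,E)\in R_{(g,\infty)}$ if and only if $f(x)>g(x)$ for every $x\in E$. The forward implication is immediate: if $h\in(g,\infty)$ witnesses $(f,E)\in R_\G$, then $f\r E=h\r E$ forces $f(x)=h(x)>g(x)$ for all $x\in E$. For the converse I would exhibit an explicit extension: assuming $f>g$ on $E$, set $d_E(x)=\inf_{y\in E}|x-y|$ and define $h=\max(f,\,g+d_E)$. Then $h$ is continuous; on $E$ we have $d_E=0$ and $f>g$, so $h=f$, while off $E$ (here I use that $E$ is closed, so $d_E>0$ there) we have $h\ge g+d_E>g$. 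Thus $h\in(g,\infty)$ and $h\r E=f\r E$.

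With this in hand, the containment $\K_\G\subseteq\{\CL(\R)\cap\P(X):X\subseteq\R\}$ is a direct computation. For any $\F\subseteq C(\R,\R)$ the characterization gives
\[
  E_\G(\F)=\{E\in\CL(\R):(\forall f\in\F)(\forall x\in E)\,f(x)>g(x)\}=\CL(\R)\cap\P(X_\F),
\]
where $X_\F=\{x\in\R:(\forall f\in\F)\,f(x)>g(x)\}=\bigcap_{f\in\F}\{x:f(x)>g(x)\}$. Since every element of $\K_\G$ is of the form $E_\G(\F)$, each such element equals $\CL(\R)\cap\P(X)$ with $X=X_\F$.

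For the reverse containment I would show that every $X\subseteq\R$ is realized as some $X_\F$. Given $X$, put $\F=\{f_{x_0}:x_0\in\R\setminus X\}$, where $f_{x_0}(x)=g(x)+|x-x_0|$. Each $f_{x_0}$ is continuous, satisfies $f_{x_0}(x_0)=g(x_0)$, and $f_{x_0}(x)>g(x)$ for all $x\ne x_0$. Hence every $x\in X$ lies in $X_\F$ (since $x\ne x_0$ whenever $x_0\in\R\setminus X$), while each $x_0\notin X$ is excluded by $f_{x_0}$ itself; thus $X_\F=X$ and $E_\G(\F)=\CL(\R)\cap\P(X)$. The degenerate case $X=\R$ corresponds to $\F=\emptyset$, giving $E_\G(\emptyset)=\CL(\R)=\CL(\R)\cap\P(\R)$. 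Combining the two containments yields the claimed equality.

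I expect the only genuine obstacle to be the extension step in the characterization lemma, namely producing from the pointwise inequality $f>g$ on $E$ a global function in $(g,\infty)$ that still agrees with $f$ on $E$; the formula $h=\max(f,\,g+d_E)$ settles this in one stroke and avoids any case analysis over the complementary intervals of $E$. Everything after that is routine bookkeeping with the definition of $E_\G$.
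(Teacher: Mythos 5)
Your proof is correct, and at the structural level it follows the same route as the paper: both arguments rest on the equivalence $(f,E)\in R_{(g,\infty)}$ if and only if $f>g$ pointwise on $E$, both conclude that every $E_\G(\F)$ has the form $\CL(\R)\cap\P(X)$ (you via the explicit set $X_\F$, the paper via $X=\bigcup\E$; these coincide, since $x\in\bigcup E_\G(\F)$ exactly when $f(x)>g(x)$ for all $f\in\F$), and both realize an arbitrary $X\subseteq\R$ with literally the same witness family $f_a(x)=g(x)+|x-a|$ for $a\in\R\setminus X$. The one place you genuinely diverge is the extension step, and there your device is the better one. The paper extends $f$ from $E$ by taking $f'$ linear on each bounded interval adjacent to $E$, which read literally can fail: if $g$ rises above the chord joining $(a,f(a))$ and $(b,f(b))$ on an adjacent interval $[a,b]$, the linear interpolant is not everywhere $>g$ (it is correct only when understood as interpolating the difference $f-g$ linearly and adding the result back to $g$). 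Your formula $h=\max(f,\,g+d_E)$ gives the extension in one stroke, with continuity immediate from the $1$-Lipschitz property of $d_E$, and no case analysis over complementary intervals. One pedantic point you should add: when $E=\emptyset$ the function $d_E$ is undefined (the infimum is $+\infty$), so treat that case separately --- $(f,\emptyset)\in R_\G$ holds trivially because $\G\neq\emptyset$ --- which is also what puts $\emptyset$ into $\CL(\R)\cap\P(X_\F)$ in your computation of $E_\G(\F)$.
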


\begin{proof}
If $\E\in\K_\G$ then $\E=E_\G(\F)$ for some $\F\subseteq C(\R,\R)$.
Denote $X=\bigcup\E$.
Clearly $\E\subseteq\CL(\R)\cap\P(X)$.
If $E\in\CL(\R)\cap\P(X)$ then for every $x\in E$ we have $x\in D$ for some $D\in\E$,
hence $f(x)>g(x)$ for all $f\in\F$.
For every $f\in\F$ there exists $f'\in\G$ such that $f'\r E=f\r E$;
it suffices to take $f'$ to be linear on each bounded interval adjacent to $E$,
and constant on unbounded ones, if there are any.
It follows that $E\in\E$, and we obtain $\E=\CL(\R)\cap\P(X)$,
hence $\K_\G\subseteq\{\CL(\R)\cap\P(X)\!:X\subseteq\R\}$.

To prove the opposite, let $X\subseteq\R$.
Denote $\F=\{f_a\!:a\in\R\setminus X\}$, where $f_a(x)=g(x)+\size{x-a}$ for all $x\in\R$.
If $E\in E_\G(\F)$ then $f_a(x)>g(x)$ for all $x\in E$ and $a\in\R\setminus X$, hence $E\subseteq X$.
It follows that $E_\G(\F)\subseteq\CL(\R)\cap\P(X)$.
The opposite inclusion is clear since $\F\subseteq F_\G(\CL(\R)\cap\P(X))$.
We obtain that $\CL(\R)\cap\P(X)=E_\G(\F)\in\K_\G$, hence
$\{\CL(\R)\cap\P(X)\!:X\subseteq\R\}\subseteq\K_\G$.
\end{proof}

A similar argument would prove the same result for the family $\G=(-\infty,g)$.
Nevertheless, it will also follow from Corollary~\ref{cor-strictly-below-together} below.

We will characterize those families $\G\subseteq C(\R,\R)$ for which
$\K_\G=\{\CL(\R)\cap\P(X)\!:X\subseteq\R\}$.
Like in the previous section, we characterize both inclusions separately.
For $x\in\R$ denote $\A_x=\CL(\R)\cap\P(\R\setminus\{x\})=\{E\in\CL(\R)\!:x\notin E\}$.

\begin{theorem} \label{thm-strictly-below-equiv-1}
Let $\G\subseteq C(\R,\R)$ be nonempty.
The following conditions are equivalent.
\begin{enumerate}[\rm (1)]
\item $\{\CL(\R)\cap\P(X)\!:X\subseteq\R\}\subseteq\K_\G$.
\item $\{\A_x\!:x\in\R\}\subseteq\K_\G$.
\item For every $x\in\R$ there exists $f\in C(\R,\R)$ such that $E_\G(\{f\})=\A_x$.
\end{enumerate}
\end{theorem}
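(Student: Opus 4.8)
The plan is to run the cycle of implications $(1)\Rightarrow(2)\Rightarrow(3)\Rightarrow(1)$, exploiting throughout that $E_\G$ turns unions of generators into intersections, i.e.\ $E_\G(\F)=\bigcap_{f\in\F}E_\G(\{f\})$. Two of the three steps are almost immediate. For $(1)\Rightarrow(2)$ I would just note that each $\A_x$ is already of the required shape: taking $X=\R\setminus\{x\}$ gives $\CL(\R)\cap\P(\R\setminus\{x\})=\A_x$, so the families $\A_x$ lie among the $\CL(\R)\cap\P(X)$ and (1) contains them. For $(3)\Rightarrow(1)$ I would fix $X\subseteq\R$, pick for each $x\in\R\setminus X$ the function $f_x$ granted by (3) with $E_\G(\{f_x\})=\A_x$, and set $\F=\{f_x:x\in\R\setminus X\}$. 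Then $E_\G(\F)=\bigcap_{x\in\R\setminus X}\A_x=\{E\in\CL(\R):(\forall x\in\R\setminus X)\,x\notin E\}=\CL(\R)\cap\P(X)$, so $\CL(\R)\cap\P(X)\in\K_\G$.

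The substantive step is $(2)\Rightarrow(3)$, where I must upgrade a possibly large generating family to a single function. Since $\A_x\in\K_\G$, I would write $\A_x=E_\G(\F)=\bigcap_{f\in\F}E_\G(\{f\})$ for some $\F\subseteq C(\R,\R)$. The singleton $\{x\}$ is a closed set with $x\in\{x\}$, so $\{x\}\notin\A_x$, and therefore $\{x\}\notin E_\G(\{f\})$ for at least one $f\in\F$. Unwinding the definition of $R_\G$, this says exactly that no $g\in\G$ agrees with $f$ at $x$, i.e.\ $f(x)\notin\G[x]$. I claim this single $f$ already satisfies $E_\G(\{f\})=\A_x$: the inclusion $E_\G(\{f\})\supseteq E_\G(\F)=\A_x$ follows because $E_\G$ is inclusion-reversing and $\{f\}\subseteq\F$; conversely, if some $E\in E_\G(\{f\})$ contained $x$, then a witnessing $g\in\G$ with $f\r E=g\r E$ would give $f(x)=g(x)\in\G[x]$, contradicting the choice of $f$, so every $E\in E_\G(\{f\})$ omits $x$ and hence $E_\G(\{f\})\subseteq\A_x$.

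The only real insight is recognizing that the function witnessing the single failure $\{x\}\notin\A_x$ automatically generates the whole of $\A_x$ by itself; once $f(x)\notin\G[x]$ is secured, the heredity built into the Galois closure and the point-by-point nature of membership in $R_\G$ do all the remaining work. In contrast to Theorem~\ref{thm-strictly-below-1}, no explicit continuous interpolation is needed here, since the generating functions are supplied by the hypotheses, so I do not anticipate any analytic difficulty—the entire argument is a manipulation of the Galois connection.
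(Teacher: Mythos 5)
Your proof is correct and takes essentially the same route as the paper: the substantive step $(2)\Rightarrow(3)$ --- extracting from a generating family for $\A_x$ a single $f$ with $f(x)\notin\G[x]$, then checking $E_\G(\{f\})\supseteq\A_x$ by antitonicity and $E_\G(\{f\})\subseteq\A_x$ by evaluating any witnessing $g$ at $x$ --- is exactly the paper's argument (the paper takes $\F=F_\G(\A_x)$ where you allow an arbitrary generating $\F$, an immaterial difference). The only cosmetic divergence is in $(3)\Rightarrow(1)$, where you compute $E_\G(\{f_x\!:x\in\R\setminus X\})=\bigcap_{x\in\R\setminus X}\A_x=\CL(\R)\cap\P(X)$ directly (note the edge case $X=\R$ is covered since $E_\G(\emptyset)=\CL(\R)$), while the paper derives the same conclusion by contradiction with $\F=F_\G(\E)$; both versions rest on the identical observation that the functions supplied by (3) generate each family $\CL(\R)\cap\P(X)$.
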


\begin{proof}
$(1) \Rightarrow (2)$ is clear.

$(2) \Rightarrow (3)$.
For every $x\in\R$, we have $\{x\}\notin\A_x=E_\G(F_\G(\A_x))$,
hence there exists $f\in F_\G(\A_x)$ such that $f\r\{x\}\notin\G\r\{x\}$.
We have $\{f\}\subseteq F_\G(\A_x)$, hence $E_\G(\{f\})\supseteq E_\G(F_\G(\A_x))=\A_x$.
Conversely, if $E\in\CL(\R)\setminus\A_x$ then $x\in E$ and hence $f\r E\notin\G\r E$.
It follows that $E\notin E_\G(\{f\})$, and we obtain $E_\G(\{f\})\subseteq\A_x$.

$(3) \Rightarrow (1)$.
Let $X\subseteq\R$, $\E=\CL(\R)\cap\P(X)$, and $\F=F_\G(\E)$.
We will show that $E_\G(\F)=\E$.
If not, then there exists $E\in E_\G(\F)$ such that $E\nsubseteq X$.
Let $x\in E\setminus X$.
By (3) there exists $f\in C(\R,\R)$ such that $E_\G(\{f\})=\A_x$.
Since $\E\subseteq\A_x$, we have $F_\G(\E)\supseteq F_\G(\A_x)$, hence $f\in F_\G(\E)=\F$.
It follows that $E\in E_\G(\{f\})$, and we come to a contradiction.
\end{proof}

Note that the family $\{\A_x\!:x\in\R\}$ in condition (2) of Theorem~\ref{thm-strictly-below-equiv-1} is minimal.
Given $z\in\R$, let $\G$ be the family of all continuous functions $f$ such that
$f(x)>0$ for all $x\neq z$.
For every $y\in\R$, let $f_y(x)=\size{x-y}$.
If $y\neq z$ then $E_\G(\{f_y\})=\A_y$, hence $\A_y\in\K_\G$.
Since $F_\G(\A_z)=\{f\in C(\R,\R)\!:(\forall x\neq z)\,f(x)>0\}=\G$,
we obtain that $E_\G(F_\G(\A_z))=\CL(\R)$, hence $\A_z\notin\K_\G$.

To characterize all families $\G$ such that $\K_\G\subseteq\{\CL(\R)\cap\P(X)\!:X\subseteq\R\}$
we need the following notion.
We say that a set $H\subseteq\R^2$ is \emph{functionally connected}
if for any two points $(x_1,y_1),(x_2,y_2)\in H$ such that $x_1\neq x_2$,
there exists a continuous function $h\colon[x_1,x_2]\to\R$ such that $h(x_1)=y_1$, $h(x_2)=y_2$,
and the graph of $h$ is included in $H$.
If $H$ is a functionally connected set then $\pr_1[H]$, the projection of $H$ to the first coordinate, is connected.
If $\pr_1[H]$ has at most one point then $H$ is functionally connected.
If $\pr_1[H]$ has more than one point and $H$ is functionally connected then $H$ must be pathwise connected.
A connected set need not to be functionally connected, a simple example is the unit circle $\{(x,y)\!:x^2+y^2=1\}$.

\begin{lemma} \label{lem-func-conn-1}
Let $a<b$ and let $H\subseteq\R^2$ be a functionally connected set such that $[a,b]\subseteq\pr_1[H]$.
Let $h\colon[a,b]\to\R$ be a continuous function such that $h\subseteq H$,
and let $u,v\in\R$ be such that points $(a,u),(b,v)\in H$.
Then for every open interval $J$ such that $u,v\in J$ and\/ $\rng(h)\subseteq J$,
there exists a continuous function $g\colon[a,b]\to J$ such that $g\subseteq H$, $g(a)=u$, and $g(b)=v$.
\end{lemma}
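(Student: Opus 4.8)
The plan is to reformulate the conclusion topologically. Put $W=H\cap([a,b]\times J)$. Since every point of $W$ has its second coordinate in $J$, any continuous curve contained in $W$ automatically has its range (the image of its second coordinate) inside $J$; hence producing the required $g$ is the same as showing that $(a,u)$ and $(b,v)$ lie in a single path-component of $W$, joined by a curve that is the graph of a function over $[a,b]$. The graph $\Gamma_h=\{(x,h(x)):x\in[a,b]\}$ lies in $W$ and is itself such a graph-curve, so it suffices to join each corner $(a,u)$ and $(b,v)$ to $\Gamma_h$ inside $W$ by a graph over a subinterval, and then travel along $h$ between the two landing abscissae (trimming if the landing abscissae happen to overlap).

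To confine curves to $J$, I would first use compactness: $\rng(h)\cup\{u,v\}$ is a compact subset of the open interval $J$, so I may fix a closed band $[m,\ell]\subseteq J$ with $\rng(h)\cup\{u,v\}\subseteq(m,\ell)$. Given any functional connection $p$ supplied by functional connectedness (a continuous $p$ with $p\subseteq H$ joining two prescribed points of $H$ with distinct abscissae), consider the closed set of its crossings with $h$, namely $\{x:p(x)=h(x)\}$. On each component of the complement on which $p$ leaves the band, both endpoints are crossings (where $p=h$), so replacing $p$ there by $h$ keeps the graph inside $H$, preserves continuity, and forces that arc into the band. This crossing-surgery turns $p$ into a graph-curve lying in $W$ except possibly on the at-most-two arcs adjacent to the abscissae $a$ and $b$, whose outer endpoints are the corners rather than crossings.

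Everything thus reduces to the crux: joining a corner, say $(a,u)$, to $\Gamma_h$ within $W$. Here I would argue iteratively. Take a functional connection $P$ from $(a,u)$ to $(b,h(b))$ and let $\sigma$ be the first abscissa at which $P$ meets the boundary of the band; then $P$ restricted to $[a,\sigma]$ already lies in $W$ and reaches a point $(\sigma,\ell)$ (or $(\sigma,m)$). If $\sigma=b$ the backbone is reached; otherwise I re-connect from $(\sigma,\ell)$ toward $\Gamma_h$, apply the same surgery, and repeat, obtaining abscissae $a<\sigma_1<\sigma_2<\cdots$ at which in-band points are reached. The remaining work is to show this process actually lands on $\Gamma_h$: either by proving the $\sigma_k$ advance by a definite amount, or by extracting a limit abscissa and using functional connectedness together with the crossing-surgery at that limit to cross onto $h$. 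Joining $(b,v)$ to $\Gamma_h$ is symmetric, and concatenating the two corner-to-backbone curves with the intervening portion of $h$ yields the graph $g$.

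The main obstacle is precisely this last step. Functional connectedness supplies curves in $H$ but with no control over their range, so the confinement to $J$ has to be engineered by hand; the delicate point is proving that the iterative band-confinement genuinely reaches the backbone rather than stalling against the upper edge $y=\ell$ of the band. Controlling the first exit from the band and guaranteeing honest progress (or a usable limiting abscissa) is where the real content of the lemma lies.
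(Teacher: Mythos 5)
Your reduction to joining the corners to the graph $\Gamma_h$, and your crossing-surgery (replacing a connection by $h$ on complementary components of its crossing set), are both sound and in fact close in spirit to what the paper does. But the proof is incomplete exactly where you say it is, and the gap is not a technicality. The iterative band-confinement has a genuine failure mode: after you re-connect from an exit point $(\sigma_k,\ell)$, the new functional connection may leave the band again arbitrarily close to $\sigma_k$, so the abscissae $\sigma_k$ need not advance by any definite amount and can accumulate at some $\sigma^*<b$. Near $\sigma^*$ the concatenated partial curves may oscillate by as much as $\ell-m$, so they have no continuous limit, and functional connectedness hands you no point of $H$ at abscissa $\sigma^*$ from which to continue. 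Neither of your two escape routes (definite progress, or a usable limiting abscissa) can be established from the hypotheses as you have set things up, because a connection aimed at a far-away target such as $(b,h(b))$ carries no control whatsoever before surgery, and surgery only helps on components whose \emph{both} endpoints are crossings.

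The missing idea --- the one the paper's proof turns on --- is to aim the auxiliary connection at a \emph{nearby} point of $\Gamma_h$ instead of across the whole interval, so that a crossing is guaranteed rather than manufactured by iteration. Fix a connection $f$ from $(a,u)$ to $(b,v)$ and, by continuity of $f$ at $a$, choose $a_2$ with $f(x)\in J$ for all $x\in[a,a_2]$. If $f$ meets $h$ on $[a,a_2]$, that side is done. Otherwise, say $f>h$ on $[a,a_2]$, take a connection $f'\colon[a,a_2]\to\R$, $f'\subseteq H$, from $(a,f(a))$ to $(a_2,h(a_2))$: since $f'$ \emph{ends on the graph of $h$}, its crossing set with $h$ is nonempty. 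Let $a_0=\max\{x\in[a,a_2]\!:f'(x)=f(x)\}$ and $a_1=\min\{x\in[a_0,a_2]\!:f'(x)=h(x)\}$; on $[a_0,a_1]$ one has $h\le f'\le f$, and since both $h$ and $f\r[a,a_2]$ take values in the interval $J$, the arc $f'\r[a_0,a_1]$ lies in $J$ automatically --- no band $[m,\ell]$, no iteration, no exit points. Splicing $f$ on $[a,a_0]$ with $f'$ on $[a_0,a_1]$ joins the corner to $\Gamma_h$ inside $[a,b]\times J$ in a single step; the symmetric construction at $b$ and the stretch of $h$ in between complete $g$. In short, the sandwich between $h$ and a piece of $f$ already known to lie in $J$ replaces your band-confinement, and targeting a point of $\Gamma_h$ is what guarantees the landing your iteration could only hope for.
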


\begin{proof}
Let $a,b,H,h,u,v$, and $J$ be as above.
There exists a continuous function $f\colon[a,b]\to\R$
such that $f\subseteq H$, $f(a)=u$, and $f(b)=v$.
Let $a_2,b_2\in(a,b)$ be such that $a_2<b_2$ and $f(x)\in J$ for every $x\in[a,a_2]\cup[b_2,b]$.

Let us first prove that there exists some $a_1\in[a,a_2]$ and a continuous function
$f_1\colon[a,a_1]\to J$
such that $f_1\subseteq H$, $f_1(a)=f(a)$, and $f_1(a_1)=h(a_1)$.
This is clear if $f(x)=h(x)$ for some $x\in[a,a_2]$.
If this is not the case then the values $f(a)-h(a)$ and $f(a_2)-h(a_2)$ must have the same signs.
Without a loss of generality, assume that $f(a)>h(a)$ and $f(a_2)>h(a_2)$.
Let $f'\colon[a,a_2]\to\R$ be a continuous function such that $f'\subseteq H$,
$f'(a)=f(a)$, and $f'(a_2)=h(a_2)$.
Let $a_0=\max\{x\in[a,a_2]\!:f'(x)=f(x)\}$ and
$a_1=\min\{x\in[a_0,a_2]\!:f'(x)=h(x)\}$.
It follows that $f'(x)\in J$ for every $x\in[a_0,a_1]$, and we can define
$f_1(x)=f(x)$ for $x\in[a,a_0]$ and $f_1(x)=f'(x)$ for $x\in[a_0,a_1]$.
Then $f_1$ is as required.

Similarly, there exist $b_1\in[b',b]$ and a continuous function $f_2\colon[b_1,b]\to J$
such that $f_2\subseteq H$, $f_2(b_1)=h(b_1)$, and $f_2(b)=f(b)$.
Let $g(x)=f_1(x)$ for $x\in[a,a_1]$, $g(x)=h(x)$ for $x\in[a_1,b_1]$, and $g(x)=f_2(x)$ for $x\in[b_1,b]$.
Then $g$ has the required properties.
\end{proof}

\begin{lemma} \label{lem-func-conn-2}
Let $H\subseteq\R^2$ be a functionally connected set such that $\pr_1[H]=\R$.
Then for every $f\in C(\R,\R)$ and $E\in\CL(\R)$ such that $f\r E\subseteq H$,
there exists $g\in C(\R,\R)$ such that $g\r E=f\r E$ and $g\subseteq H$.
\end{lemma}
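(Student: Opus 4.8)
The plan is to extend $f\r E$ across the complementary intervals of $E$, keeping the graph inside $H$. I would set $g\r E=f\r E$; since $f\r E\subseteq H$ this part of the graph already lies in $H$. The open set $\R\setminus E$ is a countable disjoint union of open intervals, and it suffices to define $g$ on the closure of each of them so that $g\subseteq H$ and $g$ takes the prescribed values of $f$ at the endpoints lying in $E$. On a bounded complementary interval $(a,b)$ we have $a,b\in E$, hence $(a,f(a)),(b,f(b))\in H$, and functional connectivity of $H$ (together with $\pr_1[H]=\R$) provides continuous $H$-paths over $[a,b]$ joining these two points. The only genuine difficulty is not the fill-in on a single interval---such a fill-in is automatically continuous at its endpoints---but the continuity of the assembled function $g$ at those $x_0\in E$ at which infinitely many complementary intervals accumulate; there one must force the oscillation of the fill-ins on the shrinking intervals to zero.

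The key step is the following claim: for every $x_0\in E$ and $\varepsilon>0$ there is $\delta>0$ such that every complementary interval $(a,b)$ with $[a,b]\subseteq(x_0-\delta,x_0+\delta)$ admits a continuous $g_{(a,b)}\colon[a,b]\to\R$ with $g_{(a,b)}\subseteq H$, $g_{(a,b)}(a)=f(a)$, $g_{(a,b)}(b)=f(b)$, and $\rng(g_{(a,b)})\subseteq(f(x_0)-\varepsilon,f(x_0)+\varepsilon)$. To prove it I would fix $c\in E$ on a side from which intervals accumulate at $x_0$ and, using functional connectivity, a continuous reference path $P\colon[x_0,c]\to\R$ with $P\subseteq H$ and $P(x_0)=f(x_0)$ (and symmetrically a path on the left when needed). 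Since $P$ is continuous with $P(x_0)=f(x_0)$, there is $\delta>0$ with $\rng(P\r[x_0,x_0+\delta])\subseteq J:=(f(x_0)-\varepsilon,f(x_0)+\varepsilon)$ and, shrinking $\delta$ if necessary, also $f(a),f(b)\in J$ for all such intervals. For a complementary interval $[a,b]$ in this range, $P\r[a,b]$ is an $H$-path with range in $J$; feeding it, the targets $f(a),f(b)$, and $J$ into Lemma~\ref{lem-func-conn-1}---whose hypotheses are then exactly met---yields $g_{(a,b)}$ with range in $J$ and the correct endpoints. Thus Lemma~\ref{lem-func-conn-1} turns a range-controlled seed, obtained by restricting one long connecting path anchored at $(x_0,f(x_0))$, into a range-controlled fill-in with prescribed endpoint values.

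With the claim I would assemble $g$ as follows. Fixing an enumeration of the bounded complementary intervals, I choose on the $k$-th interval a connecting $H$-path whose oscillation exceeds the infimum of the oscillations of all connecting $H$-paths on that interval by at most $2^{-k}$; on an unbounded complementary interval (occurring when $E$ is bounded on one side, or when $E=\emptyset$) I concatenate functional-connectivity paths over $[a,a+1],[a+1,a+2],\dots$ anchored at $(a,f(a))$ and at arbitrary points of $H$, no range control being needed since nothing accumulates at infinity. Continuity of $g$ is immediate on each open complementary interval and at each interior point of $E$. At a point $x_0\in E$ where intervals accumulate, the claim shows the infimal connecting oscillation over intervals sufficiently close to $x_0$ is at most $2\varepsilon$; since infinitely many intervals cluster at $x_0$, those close enough have large index and hence oscillation below $2\varepsilon+2^{-k}$, so their ranges lie within roughly $3\varepsilon$ of $f(x_0)$, while on $E$ one has $g=f$. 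Hence $\lim_{x\to x_0}g(x)=f(x_0)$, and $g$ is continuous.

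The main obstacle is precisely this continuity at accumulation points: a priori the connecting paths over tiny intervals near $x_0$ might be forced to make large excursions, which would destroy continuity. The decisive observation is that functional connectivity forbids this---large forced excursions on arbitrarily small intervals clustering at $x_0$ would make it impossible to join $(x_0,f(x_0))$ to nearby points of $H$ by a continuous $H$-path, contradicting functional connectivity. The reference-path argument converts this observation into the quantitative claim above, with Lemma~\ref{lem-func-conn-1} handling the endpoint bookkeeping. I expect no difficulty beyond this; note that the closedness of $H$ is not needed here, only its functional connectivity and $\pr_1[H]=\R$.
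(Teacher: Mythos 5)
Your proposal is correct and follows essentially the same route as the paper's proof: fill in the complementary intervals of $E$ with connecting paths chosen to have oscillation within $2^{-k}$ of the infimal oscillation, and control that infimum near an accumulation point $x_0\in E$ by applying Lemma~\ref{lem-func-conn-1} to a reference $H$-path through $(x_0,f(x_0))$, so that the oscillations of the fill-ins on shrinking intervals tend to zero. The only differences are cosmetic---your $\varepsilon$--$\delta$ claim versus the paper's sequential argument, and a local reference path versus a globally defined $h\in C(\R,\R)$ with $h\subseteq H$---and your closing remark that closedness of $H$ is not needed is consistent with the paper, which never assumes it in this lemma.
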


\begin{proof}
Let $H$, $f$ and $E$ be as assumed.
Let us note that for each point $(x,y)\in H$
there exists $g\in C(\R,\R)$ such that $g(x)=y$ and $g\subseteq H$.

For every closed interval $I$ adjacent to $E$ there exists a continuous function $g_I\colon\R\to\R$
such that $g_I\subseteq H$ and $g_I$ coincides with $f$ at the endpoints of $I$.
Let $g(x)=f(x)$ for $x\in E$ and $g(x)=g_I(x)$ for $x\in I$ if $I$ is a closed interval
adjacent to $E$.
We obtain a function $g\colon\R\to\R$ such that $g\r E=f\r E$ and $g\subseteq H$.

It remains to show that $g_I$ can be chosen so that $g$ is continuous.
This is clear if there are only finitely many such intervals, so we will assume the opposite.
Let $\{I_n\!:n\in\omega\}$ be one-to-one enumeration of all closed intervals adjacent to $E$.
We define intervals $g_n=g_{I_n}$ as follows.

If $I_n$ is unbounded then let $g_n\colon I_n\to\R$ be arbitrary continuous function
such that $g_n\subseteq H$ and $g_n$ coincides with $f$ at the only endpoint of $I_n$.

Assume that $I_n=[a_n,b_n]$.
For every continuous function $h\colon I_n\to\R$ denote $\osc(h)$ its oscillation, that is,
$\osc(h)=\max\{h(x)-h(y)\!:x,y\in I_n\}$.
Let $o_n=\inf\{\osc(h)\!:h\in\H_n\}$, where $\H_n$ is the family of all functions $h\in C(I_n,\R)$
such that $h\subseteq H$ and $h\r\{a_n,b_n\}=f\r\{a_n,b_n\}$.
Choose $g_n\in\H_n$ such that $\osc(g_n)\le o_n + 2^{-n}$.

We will prove that the function $g$ defined above is continuous at every point $z\in\R$.
Let us take a convergent sequence $z_k\to z$.
We will assume that this sequence is increasing, as it suffices to consider only one-sided
limits, and for decreasing sequences the proof is the same.
We may further assume that $z_k\in\R\setminus E$ for all $k$
since we have $g(x)=f(x)$ for $x\in E$ and $f$ is continuous at $z$.
For every $k$, let $n_k$ be such that $z_k\in I_{n_k}$.
If there exist $m,l$ such that $n_k=m$ for all $k>l$, then $g(z_k)=g_m(z_k)$ for all $k>l$, hence
$z\in\cl I_m$ and $g(z_k)\to g(z)$.
So we may assume that $n_k\to\infty$ and $z\in E$.

To prove that $g(z_k)\to g(z)$, it will suffice to show that $\osc(g_{n_k})\to 0$.
Fix some $h\in C(\R,\R)$ such that $h\subseteq H$ and $h(z)=g(z)$.
By Lemma~\ref{lem-func-conn-1}, for every $n$ we have $o_n\le\diam(f[I_n]\cup h[I_n])$.
Since both $f$ and $h$ is continuous at $z$, we have
$\diam(f[I_n]\cup h[I_n])\to 0$, hence
$\osc(g_{n_k})\le o_{n_k}+2^{-n_k}\to 0$.
\end{proof}

Recall that a family $\G\subseteq C(\R,\R)$ is said to be complete if $f\in\G$
for every $f\in C(\R,\R)$ such that $f\subseteq\bigcup\G$.
A complete family $\G\subseteq C(\R,\R)$ is connected if and only if $\bigcup\G$
is functionally connected.

\begin{theorem} \label{thm-strictly-below-equiv-2}
Let $\G\subseteq C(\R,\R)$ be nonempty.
The following conditions are equivalent.
\begin{enumerate}[\rm (1)]
\item $\K_\G\subseteq\{\CL(\R)\cap\P(X)\!:X\subseteq\R\}$.
\item $\G$ is a complete and connected family.
\end{enumerate}
\end{theorem}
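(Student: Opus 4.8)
The plan is to prove the two inclusions separately, exploiting the following reformulation of membership in $\{\CL(\R)\cap\P(X)\!:X\subseteq\R\}$: a hereditary family $\E$ equals $\CL(\R)\cap\P(X)$ (necessarily with $X=\bigcup\E$) exactly when it is \emph{determined by its singletons}, i.e.\ a closed set $E$ belongs to $\E$ as soon as $\{x\}\in\E$ for every $x\in E$. Condition~(1) therefore asserts that every $\E\in\K_\G$ has this property. Throughout I write $H=\bigcup\G$ and note that, since $\G\neq\emptyset$ contains a total continuous function, $\pr_1[H]=\R$.

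For $(1)\Rightarrow(2)$ I would verify the two requirements directly. \emph{Completeness.} Let $g\in C(\R,\R)$ with $g\subseteq H$ and put $\E=E_\G(\{g\})$. For each $x\in\R$ the point $(x,g(x))\in H$ witnesses some $g'\in\G$ with $g(x)=g'(x)$, so $\{x\}\in\E$; hence $\bigcup\E=\R$ and, by~(1), $\E=\CL(\R)\cap\P(\R)=\CL(\R)$. In particular $\R\in\E$, which means $g=g'\r\R$ for some $g'\in\G$, so $g\in\G$. \emph{Connectedness.} Given $f,g\in\G$ and $x_1\neq x_2$, choose any $\phi\in C(\R,\R)$ with $\phi(x_1)=f(x_1)$ and $\phi(x_2)=g(x_2)$, and set $\E=E_\G(\{\phi\})$. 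As $(x_1,\phi(x_1))$ and $(x_2,\phi(x_2))$ lie on the graphs of $f$ and $g$ they belong to $H$, so $\{x_1\},\{x_2\}\in\E$. By~(1), $\E=\CL(\R)\cap\P(\bigcup\E)$ then contains the closed set $\{x_1,x_2\}$, so some $h\in\G$ agrees with $\phi$ on $\{x_1,x_2\}$; this $h$ satisfies $h(x_1)=f(x_1)$ and $h(x_2)=g(x_2)$, as required.

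For $(2)\Rightarrow(1)$ I would use the remark preceding the theorem: a complete connected $\G$ has functionally connected $H$, and $\pr_1[H]=\R$. Let $\E=E_\G(\F)\in\K_\G$ and $X=\bigcup\E$; the inclusion $\E\subseteq\CL(\R)\cap\P(X)$ is immediate. For the reverse inclusion fix a closed set $E\subseteq X$ and $f\in\F$. Each $x\in E$ lies in some $D\in\E=E_\G(\F)$, so $(f,D)\in R_\G$ forces $f(x)\in\G[x]$ and hence $(x,f(x))\in H$; thus $f\r E\subseteq H$. Now Lemma~\ref{lem-func-conn-2} supplies $g\in C(\R,\R)$ with $g\r E=f\r E$ and $g\subseteq H$, and completeness yields $g\in\G$. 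Therefore $(f,E)\in R_\G$ for every $f\in\F$, i.e.\ $E\in E_\G(\F)=\E$, completing the proof that $\E=\CL(\R)\cap\P(X)$.

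The main obstacle is the single step in $(2)\Rightarrow(1)$ where, from the merely pointwise information $f\r E\subseteq H$, one must manufacture a globally continuous member of $\G$ reproducing $f$ on $E$: the delicate issue is choosing the function on the (possibly infinitely many) intervals complementary to $E$ so that continuity is preserved at the accumulation points of $E$. This is precisely what Lemma~\ref{lem-func-conn-2} guarantees, so I would simply invoke it. Once that lemma is in hand, the remainder is routine bookkeeping with the Galois connection together with the singleton-determinacy reading of condition~(1).
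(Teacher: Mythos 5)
Your proposal is correct and takes essentially the same approach as the paper: in $(1)\Rightarrow(2)$ you extract completeness and connectedness from singleton and two-point sets in $\E=E_\G(\{f\})$ exactly as the paper does (the paper packages connectedness as functional connectedness of $\bigcup\G$, which is equivalent for complete families by the remark preceding the theorem), and in $(2)\Rightarrow(1)$ you apply Lemma~\ref{lem-func-conn-2} plus completeness to each $f\in\F$ on a closed $E\subseteq\bigcup\E$, just as in the paper. Your explicit ``determined by its singletons'' reformulation and the check that $\pr_1\big[\bigcup\G\big]=\R$ are nice touches of bookkeeping but do not change the argument.
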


\begin{proof}
$(1) \Rightarrow (2)$.
Denote $H=\bigcup\G$.
Clearly, $\G\subseteq\{g\in C(\R,\R)\!:g\subseteq H\}$.
To prove the opposite inclusion, let $g\in C(\R,\R)$ be such that $g\subseteq H$.
Denote $\E=E_\G(\{g\})$.
For every $x\in\R$ we have $g(x)\in\G[x]$, hence $\{x\}\in\E$.
It follows that $\R\in\E$, hence $g\in\G$.
Thus, $\G=\{g\in C(\R,\R)\!:g\subseteq H\}$, hence $\G$ is complete.

It remains to show that $H$ is functionally connected.
Let $(x_1,y_1),(x_2,y_2)\in H$ and $x_1<x_2$.
Let $f\in C(\R,\R)$ be such that $f(x_1)=y_1$ and $f(x_2)=y_2$, and let $\E=E_\G(\{f\})$.
Since $\{x_1\},\{x_2\}\in\E$ and $\E\in\K_\G$, it follows that $\{x_1,x_2\}\in\E$.
Hence, there exists $g\in\G$ such that $g(x_1)=y_1$ and $g(x_2)=y_2$.

$(2) \Rightarrow (1)$.
Let $\E\in\K_\G$, that is, there exists $\F\subseteq C(\R,\R)$ such that $\E=E_\G(\F)$.
Let $X=\bigcup\E$.
We will show that $\E=\CL(\R)\cap\P(X)$. 

Let us take $E\in\CL(\R)\cap\P(X)$ and an arbitrary $f\in\F$.
For every $x\in E$ we have $\{x\}\in\E$, hence $f(x)\in\G[x]$.
It follows that $f\r E\subseteq\bigcup\G$.
Since $\G$ is connected, by Lemma~\ref{lem-func-conn-2}
there exists $g\in C(\R,\R)$ such that $f\r E=g\r E$ and $g\subseteq\bigcup\G$.
Since $\G$ is complete, we have $g\in\G$.
This shows that $E\in E_\G(\F)$, so $\CL(\R)\cap\P(X)\subseteq\E$.
The opposite inclusion is clear.
\end{proof}

From Theorems \ref{thm-strictly-below-equiv-1} and \ref{thm-strictly-below-equiv-2}
we obtain the following characterization.

\begin{corollary} \label{cor-strictly-below-together}
Let $\G\subseteq C(\R,\R)$ be nonempty.
The following conditions are equivalent.
\begin{enumerate}[\rm (1)]
\item $\K_\G=\{\CL(\R)\cap\P(X)\!:X\subseteq\R\}$.
\item $\G$ is a complete and connected family, and for every $x\in\R$
	there exists a function $f\in C(\R,\R)$ such that $f\setminus\bigcup\G=f\r\{x\}$.
\end{enumerate}
\end{corollary}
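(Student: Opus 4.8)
The plan is to read condition~(1) of the corollary as the conjunction of the two inclusions $\K_\G\subseteq\{\CL(\R)\cap\P(X):X\subseteq\R\}$ and $\{\CL(\R)\cap\P(X):X\subseteq\R\}\subseteq\K_\G$, and to feed each inclusion into one of the two preceding theorems. By Theorem~\ref{thm-strictly-below-equiv-2} the first inclusion is equivalent to $\G$ being complete and connected, and by Theorem~\ref{thm-strictly-below-equiv-1}~(3) the second is equivalent to the requirement that for every $x\in\R$ there exist $f\in C(\R,\R)$ with $E_\G(\{f\})=\A_x$. Hence condition~(1) holds if and only if $\G$ is complete and connected \emph{and} this last requirement holds. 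Since completeness and connectedness occur verbatim in condition~(2) as well, it will remain only to show, under the standing assumption that $\G$ is complete and connected, that the requirement ``for every $x$ there is $f$ with $E_\G(\{f\})=\A_x$'' coincides with ``for every $x$ there is $f$ with $f\setminus\bigcup\G=f\r\{x\}$''. I will establish the pointwise equivalence: for a fixed $f$ and a fixed $x$, $E_\G(\{f\})=\A_x$ if and only if $f\setminus\bigcup\G=f\r\{x\}$; the quantified clauses then transfer automatically.

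The key intermediate step is a description of $E_\G(\{f\})$ for complete connected $\G$. Write $H=\bigcup\G$; since $\G\neq\emptyset$ each of its members is total, so $\pr_1[H]=\R$, and by the remark preceding Theorem~\ref{thm-strictly-below-equiv-2} completeness together with connectedness makes $H$ functionally connected. Set $S_f=\{y\in\R:(y,f(y))\in H\}$. I claim $E_\G(\{f\})=\CL(\R)\cap\P(S_f)$. The inclusion $\subseteq$ is immediate, since $f\r E=g\r E$ for some $g\in\G$ forces $f(y)\in\G[y]$ for every $y\in E$, i.e. $E\subseteq S_f$. For $\supseteq$ I take a closed $E\subseteq S_f$, note $f\r E\subseteq H$, apply Lemma~\ref{lem-func-conn-2} to obtain $g\in C(\R,\R)$ with $g\r E=f\r E$ and $g\subseteq H$, and conclude $g\in\G$ by completeness, so $E\in E_\G(\{f\})$.

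Granting this, the corollary follows from two elementary observations. First, for any $X,Y\subseteq\R$ one has $\CL(\R)\cap\P(X)=\CL(\R)\cap\P(Y)$ if and only if $X=Y$: singletons are closed, so $y\in X$ exactly when $\{y\}\in\CL(\R)\cap\P(X)$, whence each such family recovers its underlying set. Applying this to $E_\G(\{f\})=\CL(\R)\cap\P(S_f)$ and $\A_x=\CL(\R)\cap\P(\R\setminus\{x\})$ gives $E_\G(\{f\})=\A_x$ if and only if $S_f=\R\setminus\{x\}$. Second, unravelling the graph notation, $f\setminus H$ is exactly the restriction of $f$ to $\R\setminus S_f$, so $f\setminus\bigcup\G=f\r\{x\}$ if and only if $\R\setminus S_f=\{x\}$, i.e. $S_f=\R\setminus\{x\}$. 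Chaining the two equivalences yields the pointwise statement and finishes the argument.

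I do not anticipate a serious obstacle: the real content is already supplied by Lemma~\ref{lem-func-conn-2} and the two theorems it supports, and what is left is the bookkeeping converting the lattice condition $E_\G(\{f\})=\A_x$ into the graph condition $f\setminus\bigcup\G=f\r\{x\}$. The only places demanding care are keeping the quantifier structure straight, so the equivalence must genuinely be proved at the level of a single pair $(f,x)$ in order that the outer ``for every $x$ there exists $f$'' clauses match, and confirming that $\pr_1[H]=\R$ together with functional connectedness of $H$ is actually in force whenever Lemma~\ref{lem-func-conn-2} is invoked, which holds as soon as $\G$ is nonempty, complete, and connected.
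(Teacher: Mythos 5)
Your proof is correct and takes essentially the approach the paper intends: the paper derives this corollary directly from Theorems~\ref{thm-strictly-below-equiv-1} and~\ref{thm-strictly-below-equiv-2}, and you combine exactly those two results, filling in the one piece left implicit, namely that under completeness and connectedness the identity $E_\G(\{f\})=\CL(\R)\cap\P(S_f)$ (via Lemma~\ref{lem-func-conn-2}) translates condition (3) of Theorem~\ref{thm-strictly-below-equiv-1} into the graph condition $f\setminus\bigcup\G=f\r\{x\}$. Your handling of the quantifier structure and of the injectivity of $X\mapsto\CL(\R)\cap\P(X)$ is sound, so nothing is missing.
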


\section{Results for family $(-\infty,g)\cup(g,\infty)$}

For $f,g\in C(\R,\R)$, if $f(x)\neq g(x)$ for every $x$ then either $f<g$ or $f>g$.
Hence, $\{f\in C(\R,\R)\!:(\forall x\in\R)\,f(x)\neq g(x)\}=(-\infty,g)\cup(g,\infty)$.

Recall that a family $\X$ of subsets of a topological space is said to be separated
if for every distinct $X,Y\in\X$ there exist disjoint open sets $U,V$ such that
$X\subseteq U$, $Y\subseteq V$ and $(\forall Z\in\X)\ Z\subseteq U\,\lor\,Z\subseteq V$.

\begin{theorem} \label{thm-diff-equal}
Let $g\in C(\R,\R)$, $\G=\{f\in C(\R,\R)\!:(\forall x\in\R)\,f(x)\neq g(x)\}$.
Then $$\K_\G=\left\{\CL(\R)\cap\bigcup_{X\in\X}\P(X)\!:\X\subseteq\P(\R)\text{ is separated\/}\right\}.$$
\end{theorem}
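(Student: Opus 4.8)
The plan is to first replace the relation $R_\G$ by a set-theoretic condition. For $f\in C(\R,\R)$ write $U_f^-=\{x:f(x)<g(x)\}$ and $U_f^+=\{x:f(x)>g(x)\}$; these are disjoint open sets whose complement is $\Eq_{f,g}$. I claim that for closed $E$, $(f,E)\in R_\G$ if and only if $E\subseteq U_f^-$ or $E\subseteq U_f^+$. Since every $h\in\G$ satisfies $h<g$ or $h>g$ globally, the only nontrivial point is that $f\r E$ with, say, $f<g$ on $E$ extends to some $h\in(-\infty,g)\subseteq\G$; for this I would simply take $h(x)=\min\{f(x),\,g(x)-\operatorname{dist}(x,E)\}$, which is continuous, agrees with $f$ on $E$, and is strictly below $g$ everywhere (symmetrically $h=\max\{f,\,g+\operatorname{dist}(\cdot,E)\}$ when $f>g$ on $E$). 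This yields $E_\G(\F)=\{E\in\CL(\R):(\forall f\in\F)\,E\subseteq U_f^-\lor E\subseteq U_f^+\}$, which is the working description used throughout.

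For the inclusion $\K_\G\subseteq\{\,\dots\,\}$, given $\E=E_\G(\F)$ I would work on the set $P=\{x:(\forall f\in\F)\,x\notin\Eq_{f,g}\}$ of \emph{live} points and define $x\sim y$ to mean that $x,y$ lie in the same one of $U_f^-,U_f^+$ for every $f\in\F$; as an intersection of equivalence relations this is an equivalence relation. Let $\X$ be its family of classes (taking $\X=\{\emptyset\}$ in the degenerate case $P=\emptyset$). From the description of $E_\G(\F)$ together with heredity of $\E$ one checks that each point of a set $E\in\E$ is live and any two are $\sim$-equivalent, whence $\E=\CL(\R)\cap\bigcup_{X\in\X}\P(X)$. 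To see that $\X$ is separated I would take distinct classes $C_1,C_2$, pick $x\in C_1,y\in C_2$ with $x\not\sim y$, choose $f\in\F$ witnessing this with, say, $x\in U_f^-$ and $y\in U_f^+$, and put $U=U_f^-$, $V=U_f^+$: every class lies entirely in one of these two disjoint open sets because its points are live and mutually $\sim$, which is exactly the separation condition.

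For the reverse inclusion, given a separated $\X$ I would set $\E=\CL(\R)\cap\bigcup_{X\in\X}\P(X)$ and prove $E_\G F_\G(\E)=\E$, only $\subseteq$ requiring work. First I would identify $F_\G(\E)$: since $\E$ is hereditary and contains every finite subset of every $X\in\X$, a function $f$ lies in $F_\G(\E)$ precisely when each $X\in\X$ satisfies $X\subseteq U_f^-$ or $X\subseteq U_f^+$. Two families of such functions then drive the argument. For a point $x\notin\bigcup\X$ the function $g+\size{\cdot-x}$ lies in $F_\G(\E)$ (each $X$ sits in $U^+$) and has $x\in\Eq$, so $x$ can belong to no member of $E_\G F_\G(\E)$; and for distinct $X,Y\in\X$ with separating opens $U,V$ the function $g+\psi$, where $\psi=\operatorname{dist}(\cdot,\R\setminus V)-\operatorname{dist}(\cdot,\R\setminus U)$ is negative exactly on $U$ and positive exactly on $V$, lies in $F_\G(\E)$ and separates any point of $X$ from any point of $Y$. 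Now if $E\in E_\G F_\G(\E)$, the first family forces $E\subseteq\bigcup\X$; as separated families are pairwise disjoint, $E$ is partitioned among the members of $\X$, and if it met two of them the second family would split it. Hence $E\subseteq X$ for a single $X$, i.e.\ $E\in\E$.

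The hard part is this reverse inclusion: extracting from the abstract separation hypothesis two concrete functions in $F_\G(\E)$ — one that \emph{kills} every point outside $\bigcup\X$ and one realizing each pairwise separation as a sign change of $f-g$ — and verifying that together they exclude every closed set not contained in a single member of $\X$. The extension fact of the first paragraph and the routine checks that each candidate really belongs to $F_\G(\E)$ are the supporting technical points, and a little bookkeeping is needed for the degenerate family $\E=\{\emptyset\}$, which corresponds to $\X=\{\emptyset\}$.
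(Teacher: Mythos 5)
Your proposal is correct and follows essentially the same route as the paper: the same equivalence relation (same side of $g$ for every $f\in\F$) whose classes form the separated family $\X$ in the forward direction, and the same two witness functions in the reverse direction ($g+\size{\cdot-x}$ for points outside $\bigcup\X$, and a function whose sign relative to $g$ realizes the separating open pair $U,V$, which you make explicit via distance functions where the paper merely asserts existence). You are in fact somewhat more careful than the paper on the supporting details, spelling out the extension step $h=\min\{f,\,g-\operatorname{dist}(\cdot,E)\}$ that the paper leaves implicit and handling the degenerate case $\E=\{\emptyset\}$ explicitly.
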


\begin{proof}
Let $\E\in\K_\G$, that is, $\E=E_\G(\F)$ for some $\F\subseteq C(\R,\R)$.
For $x\in\bigcup\E$, denote $\E_x=\{E\in\E\!:x\in E\}$,
and let $\X=\big\{\bigcup\E_x\!:x\in\bigcup\E\big\}$.
We will show that $\X$ is separated and $\E=\CL(\R)\cap\bigcup_{X\in\X}\P(X)$.
Let us note that for every $x,y\in\R$, $\{x,y\}\in\E$ if and only if for every $f\in\F$,
$$(f(x)>g(x)\,\land\,f(y)>g(y))\,\lor\,(f(x)<g(x)\,\land\,f(y)<g(y)).$$
Hence, the relation $\sim$, defined by $x\sim y\Leftrightarrow\{x,y\}\in\E$, is an equivalence relation on $\bigcup\E$,
and $\X$ is the corresponding partition of $\bigcup\E$ into equivalence classes.

Let $X=\bigcup\E_x$ and $Y=\bigcup\E_y$ be distinct elements of $\X$.
Then $\{x,y\}\notin\E$, hence there exists $f\in\F$ such that $(f(x)-g(x))(f(y)-g(y))\le 0$.
We have $\{x\},\{y\}\in\E$, so $(f(x)-g(x))(f(y)-g(y))\neq 0$.
Without a loss of generality we may assume that $f(x)<g(x)$ and $f(y)>g(y)$.
Let $U=\{x\in\R\!:f(x)<g(x)\}$ and $V=\{x\in\R\!:f(x)>g(x)\}$.
Then $U,V$ are disjoint open sets such that $X\subseteq U$, $Y\subseteq V$.
Also, for every $z\in\bigcup\E$ we have $f(z)\neq g(z)$, hence $z\in U$ or $z\in V$.
Clearly, $z\in U$ implies $\E_z\subseteq U$, and similarly $z\in V$ implies $\E_z\subseteq V$,
hence the family $\X$ is separated.

We show that $\E=\CL(\R)\cap\bigcup_{X\in\X}\P(X)$.
The inclusion from left to right follows from the definition of $\X$.
Conversely, if $E\in\CL(\R)\cap\P(X)$ for some $X\in\X$ then we have $x\sim y$ for all $x,y\in E$,
hence for every $f\in\F$ we have either $f<_E g$ or $g<_E f$,
where $f<_E g$ is a shorthand for $(\forall x\in E)\,f(x)<g(x)$.
It follows that $f\r E\in\G\r E$, thus $E\in E_\G(\F)=\E$.

We have proved that $\K_\G\subseteq\big\{\CL(\R)\cap\bigcup_{X\in\X}\!:\X\subseteq\P(\R)\text{ is separated\/}\big\}$.
Let $\E=\CL(\R)\cap\bigcup_{X\in\X}\P(X)$
for some separated family $\X\subseteq\P(\R)$, and let $\F=F_\G(\E)$.
Then $f\in\F$ if and only if $f\in C(\R,\R)$ and $f\r E\in\G\r E$ for every $E\in\E$.
Hence, $\F=\{f\in C(\R,\R)\!:(\forall X\in\X)\,f<_X g\,\lor\,g<_X f\}$.

Let us further show that $E_\G(\F)=\E$.
Assume that $E\in\CL(\R)$ and $E\notin\E$.
Then either $E\nsubseteq\bigcup\X$ or there exist distinct sets $X,Y\in\X$ such that $E$ intersects both of them.
In the first case we take $z\in E\setminus\bigcup\X$ and $f\in C(\R,\R)$ such that $f(z)=g(z)$
and $f(x)>g(x)$ for all $x\neq z$.
Then $f\in\F$ but $f(z)\notin\G[z]$, hence $E\notin E_\G(\F)$.
In the second case let $U,V$ be disjoint open sets such that $X\subseteq U$, $Y\subseteq V$ and
$(\forall Z\in\X)\ Z\subseteq U\,\lor\,Z\subseteq V$.
There exists $f\in C(\R,\R)$ such that $U=\{x\in\R\!:f(x)>g(x)\}$ and $V=\{x\in\R\!:f(x)<g(x)\}$.
We have $f\in\F$ but $E\notin E_\G(\F)$.
It follows that $E_\G(\F)\subseteq\E$, hence the equality holds true, and thus $\E\in\K_\G$.
Hence, $\big\{\CL(\R)\cap\bigcup_{X\in\X}\P(X)\!:\X\subseteq\P(\R)\text{ is separated}\big\}\subseteq\K_\G$.
\end{proof}

If a family $\X\subseteq\P(\R)$ is separated then for every distinct sets $X,Y\in\X$
one can find regular open sets $U,V$ such that $X\subseteq U$, $Y\subseteq V$
and $(\forall Z\in\X)\ Z\subseteq U\,\lor\,Z\subseteq V$.
Indeed, if $U,V$ are disjoint open sets then their regularizations
$U'=\int(\cl U)$, $V'=\int(\cl V)$ satisfy $U\subseteq U'$, $V\subseteq V'$ and are disjoint as well.
We may also take $\R\setminus\cl U'$ instead of $V'$.

Let us recall that for $x\in\R$ we have denoted $\A_x=\CL(\R)\cap\P(\R\setminus\{x\})$.
For every open set $U\subseteq\R$ we also denote $\B_U=\CL(\R)\cap(\P(U)\cup\P(\R\setminus\cl U))$.

\begin{theorem} \label{thm-diff-2}
Let $\G\subseteq C(\R,\R)$ be nonempty.
The following conditions are equivalent.
\begin{enumerate}[\rm (1)]
\item $\big\{\CL(\R)\cap\bigcup_{X\in\X}\P(X)\!:\X\subseteq\P(\R)\text{ is separated\/}\big\}\subseteq\K_\G$.
\item $\{\A_x\!:x\in\R\}\cup\{\B_U\!:U\subseteq\R\text{ is regular open\/}\}\subseteq\K_\G$.
\item For any $x\in\R$ there exists $f\in C(\R,\R)$ such that $E_\G(\{f\})=\A_x$.
  Moreover, for any $x,y\in\R$ and any regular open set $U\subseteq\R$
  such that $x\in U$ and $y\notin\cl U$ there exists $f\in F_\G(\B_U)$ such that
  $f\r\{x,y\}\notin\G\r\{x,y\}$.
\end{enumerate}
\end{theorem}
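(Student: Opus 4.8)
The plan is to establish the cycle $(1)\Rightarrow(2)\Rightarrow(3)\Rightarrow(1)$, with almost all of the work concentrated in the last implication. Throughout I write $\E=\CL(\R)\cap\bigcup_{X\in\X}\P(X)$ for a generic member of the family in (1), where $\X\subseteq\P(\R)$ is separated, and I record at the outset that distinct members of a separated family are disjoint, since they lie in disjoint open sets. For $(1)\Rightarrow(2)$ I would simply exhibit $\A_x$ and $\B_U$ as instances of the left-hand family: the one-element family $\X=\{\R\setminus\{x\}\}$ is vacuously separated and gives $\CL(\R)\cap\P(\R\setminus\{x\})=\A_x$, while for regular open $U$ the family $\X=\{U,\R\setminus\cl U\}$ is separated (the two sets are open and disjoint, and each member sits inside one of them) and gives $\CL(\R)\cap(\P(U)\cup\P(\R\setminus\cl U))=\B_U$. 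Hence (1) forces $\A_x,\B_U\in\K_\G$.

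For $(2)\Rightarrow(3)$ the first assertion is obtained from $\A_x\in\K_\G$ exactly as in the proof of Theorem~\ref{thm-strictly-below-equiv-1}: since $\{x\}\notin\A_x=E_\G F_\G(\A_x)$ there is $f\in F_\G(\A_x)$ with $f\r\{x\}\notin\G\r\{x\}$, and then $E_\G(\{f\})=\A_x$. The second assertion I would derive from $\B_U\in\K_\G$: given $x\in U$ and $y\notin\cl U$, the two-point set $\{x,y\}$ is closed and lies in neither $\P(U)$ (as $y\notin U$) nor $\P(\R\setminus\cl U)$ (as $x\in\cl U$), so $\{x,y\}\notin\B_U=E_\G F_\G(\B_U)$; this produces $f\in F_\G(\B_U)$ with $(f,\{x,y\})\notin R_\G$, that is, $f\r\{x,y\}\notin\G\r\{x,y\}$.

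For $(3)\Rightarrow(1)$, fix a separated $\X$, set $\E$ as above and $\F=F_\G(\E)$. Since $\E\subseteq E_\G(\F)$ always holds, it suffices to prove $E_\G(\F)\subseteq\E$, i.e. that each closed $E\notin\E$ admits a witness $f\in\F$ with $(f,E)\notin R_\G$. By disjointness of the members of $\X$, such an $E$ either contains a point $z\notin\bigcup\X$, or satisfies $E\subseteq\bigcup\X$ while meeting two distinct members $X,Y\in\X$. In the first case I take the $f$ from (3) with $E_\G(\{f\})=\A_z$; every member of $\E$ avoids $z$, so $\E\subseteq\A_z=E_\G(\{f\})$, whence $f\in\F$ by the Galois connection, while $z\in E$ gives $(f,E)\notin R_\G$. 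In the second case I first use separation, together with the regular-open refinement in the remark following Theorem~\ref{thm-diff-equal}, to obtain a regular open $U$ with $X\subseteq U$, $Y\subseteq\R\setminus\cl U$, and every $Z\in\X$ contained in $U$ or in $\R\setminus\cl U$; this yields $\E\subseteq\B_U$, hence $F_\G(\B_U)\subseteq\F$. Choosing $x\in E\cap X\subseteq U$ and $y\in E\cap Y$ (so $y\notin\cl U$), the second part of (3) supplies $f\in F_\G(\B_U)\subseteq\F$ with $f\r\{x,y\}\notin\G\r\{x,y\}$, and since $\{x,y\}\subseteq E$ this forces $(f,E)\notin R_\G$.

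The routine part consists of the containment juggling through the Galois adjunction and the verification that each witness lands in $\F$. The main obstacle I expect is organizing the dichotomy in $(3)\Rightarrow(1)$: reducing an arbitrary failure $E\notin\E$ to exactly the point-avoidance and pair-separation situations handled by the two clauses of (3), and, in the second situation, promoting the merely disjoint open sets guaranteed by separation to a single regular open $U$ with complement $\R\setminus\cl U$ that simultaneously separates the chosen pair $x,y$ and respects all of $\X$, so that the pair-detecting function of (3) both applies and provably belongs to $\F$.
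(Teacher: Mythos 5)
Your proof is correct and takes essentially the same route as the paper's: the same cycle $(1)\Rightarrow(2)\Rightarrow(3)\Rightarrow(1)$, deriving the first clause of (3) from Theorem~\ref{thm-strictly-below-equiv-1}, and handling $(3)\Rightarrow(1)$ by the identical dichotomy (a point of $E$ outside $\bigcup\X$, versus $E$ meeting two distinct members of $\X$) together with the regular-open refinement of separation noted after Theorem~\ref{thm-diff-equal}. The only difference is cosmetic: you make explicit the step the paper calls obvious, realizing $\A_x$ and $\B_U$ as instances of the separated-family form via $\X=\{\R\setminus\{x\}\}$ and $\X=\{U,\R\setminus\cl U\}$.
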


\begin{proof}
$(1) \Rightarrow (2)$ is obvious.

$(2) \Rightarrow (3)$.
The first part of (3) follows from Theorem~\ref{thm-strictly-below-equiv-1}.
For the second part, let $U$ be a regular open set such that $x\in U$ and $y\notin\cl U$.
By (2), we have $\B_U\in\K_\G$, hence $E_\G(F_\G(\B_U))=\B_U$.
Since $\{x,y\}\notin\B_U$, there exists $f\in F_\G(\B_U)$ such that $f\r\{x,y\}\notin\G\r\{x,y\}$.

$(3) \Rightarrow (1)$.
Let $\X\subseteq\P(\R)$ be a separated family and let $\E=\CL(\R)\cap\bigcup_{X\in\X}\P(X)$.
Denote $\F=F_\G(\E)$.
We show that $\E=E_\G(\F)$.
Let us take $E\in\CL(\R)$, $E\notin\E$.
Then either there exists $x\in E\setminus\bigcup\X$, or there exist $x,y\in E$ and distinct sets $X,Y\in\X$
such that $x\in X$ and $y\in Y$.

If $x\in E\setminus\bigcup\X$ then let $f\in F_\G(\A_x)$ be such that $f(x)\notin\G_x$.
Since $\E\subseteq\A_x$, we have $F_\G(\A_x)\subseteq F_\G(\E)$, hence $f\in\F$.
It follows that $\{x\}\notin E_\G(\F)$, hence $E\notin E_\G(\F)$.
If $x\in X$, $y\in Y$ for some distinct $X,Y\in\X$ then there exists an open regular set $U$
such that $X\subseteq U$, $Y\subseteq\R\setminus\cl U$ and each $Z\in\X$ is covered either by $U$
or by $\R\setminus\cl U$.
It follows that $\E\subseteq\B_U$.
By (3), there exists $f\in F_\G(\B_U)$ such that $f\r\{x,y\}\notin\G\r\{x,y\}$.
We have $f\in\F$, hence $\{x,y\}\notin E_\G(\F)$, so $E\notin E_\G(\F)$.
In both cases it follows that $E_\G(\F)=\E$, hence $\E\in\K_\G$.
\end{proof}

Let us note that the family $\H=\{\A_x\!:x\in\R\}\cup\{\B_U\!:U\text{ is regular open}\}$
in the second condition of Theorem~\ref{thm-diff-2} is not minimal.
Indeed, $\B_\emptyset=\B_\R=\CL(\R)$ is an element of $\K_\G$ for every $\G\subseteq C(\R,\R)$,
hence $\H\setminus\{\B_\emptyset\}\subseteq\K_\G\,\Leftrightarrow\,\H\subseteq\K_\G$
holds for every $\G$.
We do not know whether there exists a regular open set $U$ and
a family $\G\subseteq C(\R,\R)$ such that $\H\setminus\{\B_U\}\subseteq\K_\G$ and $\B_U\notin\K_\G$.
We also do not know whether one can find a minimal family $\M$
of nonempty hereditary families of closed sets
having the property that for every $\G$, if $\M\subseteq\K_\G$ then
$\CL(\R)\cap\bigcup_{X\in\X}\P(X)\in\K_\G$ for every separated family $\X$.

To characterize families $\G$ such that
$\CL(\R)\cap\bigcup_{X\in\X}\P(X)\in\K_\G$ holds for every separated family $\X$,
we need few more notions.
Given a fixed family $\G\subseteq C(\R,\R)$ and points $a=(a_1,a_2)$, $b=(b_1,b_2)$ in $\R^2$,
let us write $a\sim b$ if there exists a function $f\in\G$ such that $f(a_1)=a_2$ and $f(b_1)=b_2$.
Clearly, if $a\sim b$ and $a_1=b_1$ then $a=b$.
We say that family $\G$ is \emph{transitive} if for any points $a,b,c\in\R^2$ having distinct
first coordinates, if $a\sim b$ and $b\sim c$ then $a\sim c$.
We say that family $\G$ is \emph{sequential} if $a\sim b$ holds true whenever $a,b\in\bigcup\G$
and there exists a sequence of points $\{a_n\!:n\in\omega\}$ in $\bigcup\G$
such that $a_n\to a$, $a_n\sim b$, and the first coordinates of points $a$, $b$, and $a_n$
are distinct, for every $n$.

Let $(I,<)$ be a linearly ordered set, and for every $i\in I$, let $\G_i\subseteq C(\R,\R)$
be nonempty.
We say that indexed system $\{\G_i\!:i\in I\}$ is \emph{sliced} if for every $i\in I$
there exist functions $g^{-}_i,g^{+}_i\in C(\R,\R^*)$ such that
$$\bigcup_{j<i}\G_j\subseteq(-\infty,g^{-}_i),\quad
\G_i\subseteq(g^{-}_i,g^{+}_i)\quad\text{and}\quad
\bigcup_{j>i}\G_j\subseteq(g^{+}_i,\infty).$$

\begin{lemma} \label{lem-sliced-1}
Let $\{\G_i\!:i\in I\}$ be a sliced system.
Then for each $i\in I$ there exist functions $h^{-}_i,h^{+}_i\in C(\R,\R^*)$ such that
$\G_i\subseteq(h^{-}_i,h^{+}_i)$ and $h^{+}_i\le h^{-}_j$ whenever $i<j$.
\end{lemma}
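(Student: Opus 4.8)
The plan is to take $h^-_i:=g^-_i$ and $h^+_i:=\min\bigl(g^+_i,\inf_{j>i}g^-_j\bigr)$ (a pointwise minimum) and to verify the three required properties, the only delicate point being the continuity of $h^+_i$. First I would record the elementary consequences of the sliced hypothesis. From $\G_i\subseteq(g^-_i,g^+_i)$ we get $g^-_i<\G_i<g^+_i$, and for $i<j$ the inclusions $\G_i\subseteq\bigcup_{k<j}\G_k\subseteq(-\infty,g^-_j)$ and $\G_j\subseteq\bigcup_{k>i}\G_k\subseteq(g^+_i,\infty)$ give $\G_i<g^-_j$ and $g^+_i<\G_j$; since each $\G_i$ is nonempty these yield $g^-_i<g^-_j$ and $g^+_i<g^+_j$, so both families are strictly increasing in $i$.

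The decisive observation is a comparison of $g^+_i$ with $g^-_j$: if there is an index $m$ with $i<m<j$, then $g^+_i<\G_m$ (as $m>i$) and $\G_m<g^-_j$ (as $m<j$), and choosing any $f\in\G_m$ gives $g^+_i<f<g^-_j$, hence $g^+_i<g^-_j$. I would use this to evaluate $\inf_{j>i}g^-_j$ and so establish continuity of $h^+_i$, splitting into two cases. If $i$ has an immediate successor $j_0$, then $\{j:j>i\}=\{j:j\ge j_0\}$, and since $g^-$ is increasing we have $\inf_{j>i}g^-_j=g^-_{j_0}$; thus $h^+_i=\min(g^+_i,g^-_{j_0})$ is a finite minimum of two continuous functions. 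If $i$ has no immediate successor, then every $j>i$ admits some $m$ with $i<m<j$, so the observation above gives $g^+_i<g^-_j$ for all such $j$; hence $g^+_i\le\inf_{j>i}g^-_j$ and $h^+_i=g^+_i$. In either case $h^+_i\in C(\R,\R^*)$.

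It then remains to check the two inequalities. For the containment $\G_i\subseteq(h^-_i,h^+_i)$ the lower bound is immediate from $g^-_i<\G_i$, while for the upper bound I note that in both cases $h^+_i$ is a finite minimum of functions each strictly above $\G_i$ — namely $g^+_i$, together with $g^-_{j_0}$ in the successor case, where $\G_i\subseteq\bigcup_{k<j_0}\G_k\subseteq(-\infty,g^-_{j_0})$ — so $\G_i<h^+_i$. For the cross-inequality, fix $i<j$; by construction $h^+_i\le\inf_{k>i}g^-_k\le g^-_j=h^-_j$, which is precisely $h^+_i\le h^-_j$.

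The step I expect to be the main obstacle is securing continuity of $h^+_i$, since defining it through the infimum $\inf_{j>i}g^-_j$ directly produces only an upper semicontinuous function that could in principle descend to meet $\G_i$. The resolution above is that the sliced hypothesis forbids this: at a limit index the intermediate bands force $g^+_i\le g^-_j$ for every $j>i$, collapsing the infimum harmlessly onto $g^+_i$, whereas at a successor index the infimum is attained by the single function $g^-_{j_0}$. Once this case analysis is in place, the remaining verifications are routine order comparisons.
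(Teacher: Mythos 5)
Your proof is correct, and it takes a genuinely different route from the paper's. The paper first observes that $I$ is at most countable (each slice traps a rational number in the interval $(g^{-}_i(0),g^{+}_i(0))$), fixes an enumeration $\{i(n)\!:n<\omega\}$ of $I$, and defines $h^{-}_{i(n)},h^{+}_{i(n)}$ by recursion on $n$ as finite maxima/minima of the previously constructed functions together with $g^{-}_{i(n)},g^{+}_{i(n)}$; continuity is then automatic, since each stage is a lattice combination of finitely many continuous functions, and the inequalities $\G_i\subseteq(h^{-}_i,h^{+}_i)$ and $h^{+}_i\le h^{-}_j$ for $i<j$ hold by construction. You instead give a closed-form, non-recursive definition, $h^{-}_i=g^{-}_i$ and $h^{+}_i=\min\bigl(g^{+}_i,\inf_{j>i}g^{-}_j\bigr)$, and you correctly identify the one delicate point: the infimum is a priori only upper semicontinuous. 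Your resolution via the successor/limit dichotomy is sound, and its engine --- the comparison $g^{+}_i<g^{-}_j$ whenever some $m$ satisfies $i<m<j$, obtained by passing through a point of the nonempty intermediate slice $\G_m$ --- is the genuinely new ingredient relative to the paper; your remaining verifications (monotonicity of the $g^{\pm}$ in the index via nonemptiness of the $\G_i$, strictness of $\G_i<h^{+}_i$ in the successor case via $\G_i\subseteq(-\infty,g^{-}_{j_0})$, the cross-inequality $h^{+}_i\le\inf_{k>i}g^{-}_k\le g^{-}_j=h^{-}_j$, and the vacuous case where $i$ is the largest element of $I$) all check out. What your approach buys: it avoids both the enumeration and the countability of $I$ altogether, so it works verbatim for an arbitrary linearly ordered index set, and it pinpoints exactly where continuity could fail (only at successor indices, where the infimum is attained at $g^{-}_{j_0}$). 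What the paper's approach buys: it is mechanical, requiring no case analysis on the order structure of $I$, at the cost of the countability observation and an induction.
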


\begin{proof}
For every $i$, let $g^{-}_i,g^{+}_i\in C(\R,\R^*)$ be such that
$\bigcup_{j<i}\G_j\subseteq(-\infty,g^{-}_i)$, $\G_i\subseteq(g^{-}_i,g^{+}_i)$,
and $\bigcup_{j>i}\G_j\subseteq(g^{+}_i,\infty)$.
It is clear that if $\bigcup_{j<i}\G_j\neq\emptyset$ then $g^{-}_i\in C(\R,\R)$
and, similarly, if $\bigcup_{j>i}\G_j\neq\emptyset$ then $g^{+}_i\in C(\R,\R)$.
Since each interval $(g^{-}_i(0),g^{+}_i(0))$ contains a rational number,
$I$ is at most countable.

For simplicity let us assume that $I$ is infinite.
In the finite case the proof will be the same.
Let $\{i(n)\!:n<\omega\}$ be a one-to-one enumeration of $I$.
By induction let us define
\begin{align*}
h^{-}_{i(n)}(x)=\max\big(\big\{h^{+}_{i(m)}(x)\!:m<n&\,\land\,i(m)<i(n)\big\}\cup\big\{g^{-}_{i(n)}(x)\big\}\big),\\
h^{+}_{i(n)}(x)=\min\big(\big\{h^{-}_{i(m)}(x)\!:m<n&\,\land\,i(m)>i(n)\big\}\cup\big\{g^{+}_{i(n)}(x)\big\}\big).
\end{align*}

It is clear that $h^{-}_i,h^{+}_i\in C(\R,\R^*)$, and $\G_i\subseteq(h^{-}_i,h^{+}_i)$.
Moreover, if $m<n$ then either $i(m)<i(n)$ and then $h^{+}_{i(m)}\le h^{-}_{i(n)}$ by the definition
of $h^{-}_{i(n)}$, or $i(m)>i(n)$ and then $h^{+}_{i(n)}\le h^{-}_{i(m)}$ by the definition of
$h^{+}_{i(n)}$.
Hence, $h^{+}_i\le h^{-}_j$ for any $i<j$.
\end{proof}

\begin{lemma} \label{lem-sliced-2}
Let $\G\subseteq C(\R,\R)$ be a complete, transitive and sequential family.
Then there exists a sliced system $\{\G_i\!:i\in I\}$ such that $\G=\bigcup_{i\in I}\G_i$ and
each $\G_i$ is complete and connected.
\end{lemma}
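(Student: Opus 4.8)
The plan is to read off both the slices and their linear order from the equivalence relation generated by $\sim$ on $H=\bigcup\G$. Since $\sim$ is reflexive and symmetric on $H$, its transitive closure $\equiv$ is an equivalence relation; let $\{H_i:i\in I\}$ be its classes and set $\G_i=\{g\in C(\R,\R):g\subseteq H_i\}$. Two facts are immediate. Any two points of the graph of a fixed $g\in\G$ have distinct first coordinates and are joined by $g$ itself, so they are $\sim$-related; hence the whole graph of $g$ lies in a single class, giving $g\in\G_i$ for exactly one $i$. Conversely $\G_i\subseteq\{g:g\subseteq H\}=\G$ by completeness of $\G$. Thus $\G=\bigcup_{i\in I}\G_i$, each $\G_i$ is nonempty, and each $\G_i$ is complete by construction. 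Connectedness of the slices is the only nontrivial point, and it reduces to the following.

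\textbf{Main Claim:} if $a,c\in H$, $a\equiv c$, and $a,c$ have distinct first coordinates, then $a\sim c$. Granting it, connectedness of $\G_i$ is read off directly: for $f,g\in\G_i$ and $x\neq y$ the points $(x,f(x)),(y,g(y))$ lie in $H_i$, so they are $\equiv$-related and, by the Claim, $\sim$-related; the witnessing function passes through a point of $H_i$, hence lies in $\G_i$ and realizes the required interpolation. I would prove the Claim by induction on the length $k$ of a chain $a=p_0\sim\cdots\sim p_k=c$, the case $k=1$ being the definition. For $k\ge2$, deleting repetitions makes consecutive points distinct, so consecutive first coordinates differ; write $t_0,t_1,t_2$ for the first coordinates of $p_0,p_1,p_2$. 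If $t_0\neq t_2$, transitivity applied to $p_0,p_1,p_2$ gives $p_0\sim p_2$ and shortens the chain. The essential case is $t_0=t_2$, which is exactly where $\sim$ may fail to be transitive. Here I slide along the function $f_0\in\G$ realizing $p_0\sim p_1$: for small $\varepsilon\neq0$ the point $a^{\varepsilon}=(t_0+\varepsilon,f_0(t_0+\varepsilon))\in H$ has first coordinate distinct from $t_1$ and $t_2$, so transitivity gives $a^{\varepsilon}\sim p_2$, and hence $a^{\varepsilon}\sim p_2\sim\cdots\sim c$ is a chain of length $k-1$ between points with distinct first coordinates; the induction hypothesis yields $a^{\varepsilon}\sim c$. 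Letting $\varepsilon\to0$, with the first coordinates of $a,c,a^{\varepsilon}$ pairwise distinct, sequentiality gives $a\sim c$. This step is the heart of the lemma and the place I expect the main difficulty: one must arrange the perturbation so that the distinctness side-conditions of both the transitivity and the sequentiality axioms hold simultaneously.

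Next I order the slices by declaring $\G_i$ below $\G_j$ if $f<g$ pointwise for one, equivalently every, $f\in\G_i$, $g\in\G_j$. Distinct classes are disjoint, so no $f\in\G_i$ and $g\in\G_j$ with $i\neq j$ can agree at a point; by the intermediate value theorem every such pair is strictly ordered, which gives totality. Independence of representatives uses connectedness of the slices: if $f,f'\in\G_i$, $g\in\G_j$, and $f<g$ while $g<f'$, then by connectedness of $\G_i$ there is $h\in\G_i$ with $h=f$ at one point and $h=f'$ at another, so $h$ lies below $g$ at the first point and above $g$ at the second and therefore meets $g$ by the intermediate value theorem, contradicting disjointness of $H_i$ and $H_j$. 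The same argument on the $g$-side shows the relation is a well-defined strict order, and it is clearly transitive, so $(I,<)$ is linearly ordered.

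Finally, to verify that the system is sliced it suffices, for each cut of $(I,<)$ into a lower part and an upper part, to insert a continuous $g\in C(\R,\R^*)$ lying strictly above every function of the lower part and strictly below every function of the upper part; the cuts $\{j<i\}\mid\{j\ge i\}$ and $\{j\le i\}\mid\{j>i\}$ then furnish $g^{-}_i$ and $g^{+}_i$. At each $x$ the heights attained by the lower part are all strictly below those attained by the upper part, so, writing $\beta(x)$ for the supremum of the lower heights at $x$ and $\mu(x)$ for the infimum of the upper heights, one has $\beta\le\mu$, with equality at a point possible only when neither value is attained, which still leaves room for strict separation there. Using the Main Claim one shows each class, and hence each part of the cut, is relatively closed in $H$: a vertical accumulation of one class onto a point of another is converted, by sliding along the functions through the accumulating points, into a horizontal one and then excluded by sequentiality. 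This relative closedness gives the semicontinuity of $\beta$ and $\mu$ needed for a Katetov--Tong-type insertion of a continuous separator. I expect the verification of these closedness and semicontinuity properties to be the only other step requiring genuine care; once the separators are in place, $\{\G_i:i\in I\}$ is a sliced system of complete connected families with $\G=\bigcup_{i\in I}\G_i$, as required.
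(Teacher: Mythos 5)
Your construction is, in substance, the paper's: your relation $\equiv$ coincides with the paper's relation $\approx$ (defined by $a\approx b$ iff $a\sim c\sim b$ for some $c$), and the classes $H_i$, the slices $\G_i=\{g\in C(\R,\R)\!:g\subseteq H_i\}$, and the linear order obtained from disjointness of classes plus the intermediate value theorem are identical; your explicit verification that the order is independent of representatives is a correct filling-in of what the paper dismisses as clear. Your Main Claim is true and your induction is valid, but note that sequentiality is not needed for it. The paper proves that $\approx$ is already transitive using only transitivity of $\G$: in the problematic case $a_1=c_1$, $b_1=d_1$ of a chain $a\sim b\sim c\sim d$, it slides to a single point $b'=(b'_1,f(b'_1))$ with $b'_1>b_1$ on the function $f$ witnessing $b\sim c$, getting $a\sim b'\sim d$ by two applications of transitivity --- a finite perturbation in place of your $\varepsilon\to 0$ limit. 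The Claim then follows since a two-step witness $c$ for $a\approx b$ with $a_1\neq b_1$ either has first coordinate distinct from both (transitivity applies) or satisfies $c_1=a_1$ or $c_1=b_1$, which forces $c=a$ or $c=b$ because $a\sim c$ with $a_1=c_1$ implies $a=c$. So the paper reserves sequentiality entirely for the boundary analysis; your extra use of it in the Main Claim is sound but redundant.

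The one step that is wrong as written is the separator construction. Your $\beta$, a pointwise supremum of continuous functions, is lower semicontinuous --- the wrong semicontinuity for inserting a continuous function above it --- and relative closedness of the classes in $H$ cannot upgrade it to upper semicontinuity, because $H$ itself need not be closed in $\R^2$ (it is open for $\G=(-\infty,0)\cup(0,\infty)$). Concretely, the complete, transitive, sequential family with the two classes $H_1=\{(x,y)\!:x\le 0,\ y<0\}\cup\{(x,y)\!:x>0,\ y<1\}$ and $H_2=\{(x,y)\!:y>1\}$ has both classes relatively closed in $H=H_1\cup H_2$, yet the raw envelope $\beta$ of the lower part jumps from $0$ to $1$ at $x=0$ and is not u.s.c. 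The repair is exactly what the paper does: replace $\beta,\mu$ by the regularized envelopes $h_1(x)=\inf_{\varepsilon>0}\sup\{y\!:(x',y)\in H_i,\ \size{x'-x}<\varepsilon\}$, which is u.s.c. by construction, and the dual l.s.c. $h_2$ for the upper part; use sequentiality (via the vertical-to-horizontal sliding you describe) to show that $(x,h_1(x))\in H$ forces $(x,h_1(x))\in H_i$ while $(x,h_2(x))\in H$ forces it into the successor class, so $h_1(x)=h_2(x)$ implies the common point lies outside $H$. Moreover, a plain Kat\v etov--Tong insertion is insufficient even after regularization, because the sliced condition requires the strict inequalities $f<g^{+}_i<g$; the paper instead invokes Michael's insertion theorem (Engelking, Exercise 1.7.15(d)), which yields a continuous $h^{+}$ with $h_1\le h^{+}\le h_2$ and $h_1(x)<h^{+}(x)<h_2(x)$ wherever $h_1(x)<h_2(x)$, strictness at touching points then following from the fact that the common value is not attained in $H$. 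With the regularized envelopes and the strict insertion theorem substituted for your $\beta,\mu$ and Kat\v etov--Tong, your outline becomes the paper's proof.
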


\begin{proof}
We may assume that $\G$ is nonempty.
Denote $H=\bigcup\G$.
For $a,b\in\R^2$, let us write $a\approx b$ if there exists $c$ such that $a\sim c\sim b$.
We prove that $\approx$ is an equivalence relation on $H$.
The symmetry and the reflexivity of $\approx$ is clear.
For the transitivity it suffices to prove that $a\sim b\sim c\sim d$ implies $a\approx d$.
Let $a=(a_1,a_2)$, $b=(b_1,b_2)$, $c=(c_1,c_2)$, $d=(d_1,d_2)$.
We may assume that $a\neq b\neq c\neq d$, hence $a_1\neq b_1\neq c_1\neq d_1$.
If $a_1\neq c_1$ then by transitivity of $\G$ we have $a\sim c$, and we are done.
A similar argument works if $b_1\neq d_1$, so we may assume that $a_1=c_1$ and $b_1=d_1$.
Without a loss of generality, let $a_1<b_1$.
Let $f\in\G$ be such that $f(b_1)=b_2$, and let $b'=(b'_1,b'_2)$ be such that
$b'_1>b_1$ and $b'_2=f(b'_1)$.
Then we have $a\sim b'$ and $b'\sim c$.
Since $b'_1\neq d_1$, it follows that $b'\sim d$, and thus $a\approx d$.

Let $\{H_i\!:i\in I\}$ be the partition of $H$ corresponding to the equivalence $\approx$,
and for every $i\in I$ let $\G_i=\{f\in\G\!:f\subseteq H_i\}$.
Let $f\in\G$ be arbitrary.
For all points $a,b\in f$ we have $a\approx b$, hence $f\subseteq H_i$ for some $i$.
It follows that $\G=\bigcup\{\G_i\!:i\in I\}$.
By the definition of $\approx$ and the completeness of $\G$, each $\G_i$ is connected and complete,
and we have $\bigcup\G_i=H_i$.
Clearly, if $\G_i\neq\G_j$ then 
$(\forall f\in\G_i)(\forall g\in\G_j)\,f<g$ or $(\forall f\in\G_i)(\forall g\in\G_j)\,f>g$.
Thus there exists a linear order on $I$ so that
$i<j$ if and only if $f<g$ for all $f\in\G_i$ and $g\in\G_j$.

For every $i$ such that $\bigcup_{j>i}\G_j\neq\emptyset$,
let us define $h_1(x)=\inf\{\sup A_{x,\varepsilon}\!:\varepsilon>0\}$ and
$h_2(x)=\sup\{\inf B_{x,\varepsilon}\!:\varepsilon>0\}$,
where $A_{x,\varepsilon}=H_i\cap\big((x-\varepsilon,x+\varepsilon)\times\R\big)$ and
$B_{x,\varepsilon}=\bigcup_{j>i}H_j\cap\big((x-\varepsilon,x+\varepsilon)\times\R\big)$.
Then $h_1$ is upper semi-continuous, $h_2$ is lower semi-continuous,
and we have $f\le h_1\le h_2\le g$ for all $f\in\G_i$ and $g\in\bigcup_{j>i}\G_j$.
For $x\in\R$ denote $a=(x,h_1(x))$ and assume that $a\in H$.
Then there exists a sequence $\{a_n\!:n\in\omega\}$ in $H_i$ converging to $a$
and such that each $a_n$'s first coordinate is distinct from $x$.
Let $b\in H_i$ be such that for every $n$, first coordinates of $a$, $b$, and $a_n$ are distinct.
We have $a_n\sim b$ for every $n$.
Since $\G$ is sequential, it follows that $a\sim b$, hence $a\in H_i$.
Similarly, if $b=(x,h_2(x))$ and $b\in H$ then there exists $k\in I$ such that
$k=\min\{j\in I\!:j>i\}$, and $b\in H_k$.
It follows that if $h_1(x)=h_2(x)=y$ then $(x,y)\notin H$.

By a theorem of Michael (see~\cite{Engelking}, Exercise 1.7.15 (d)),
there exists a continuous function $h^{+}\colon\R\to\R$ such that $h_1\le h^{+}\le h_2$
and for every $x\in\R$, if $h_1(x)<h_2(x)$ then $h_1(x)<h^{+}(x)<h_2(x)$.
It follows that $f<h^{+}<g$ for any $f\in\G_i$ and $g\in\bigcup_{j>i}\G_j$.
A similar argument shows that if $\bigcup_{j<i}\G_j\neq\emptyset$ then
there exists $h^{-}\in C(\R,\R)$ such that $f<h^{-}<g$ for any $f\in\bigcup_{j<i}\G_j$ and $g\in\G_i$.
Hence, $\{\G_i\!:i\in I\}$ is a sliced system.
\end{proof}

\begin{theorem} \label{thm-diff-3}
Let $\G\subseteq C(\R,\R)$ be nonempty.
Then the following conditions are equivalent.
\begin{enumerate}[\rm (1)]
\item $\K_\G\subseteq\big\{\CL(\R)\cap\bigcup_{X\in\X}\P(X)\!:\X\text{ is separated\/}\big\}$.
\item There exists a sliced system $\{\G_i\!:i\in I\}$ of complete connected families
  such that $\G=\bigcup_{i\in I}\G_i$.
\end{enumerate}
\end{theorem}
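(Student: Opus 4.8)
The plan is to prove the two implications separately, using Lemma~\ref{lem-sliced-2} as the main engine for the direction $(1)\Rightarrow(2)$ and the characterization of Theorem~\ref{thm-diff-equal} together with Lemma~\ref{lem-sliced-1} for the direction $(2)\Rightarrow(1)$.

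For $(2)\Rightarrow(1)$ I would start with a sliced system $\{\G_i\!:i\in I\}$ of complete connected families with $\G=\bigcup_{i\in I}\G_i$. Take any $\E\in\K_\G$, so $\E=E_\G(\F)$ for some $\F\subseteq C(\R,\R)$; by Theorem~\ref{thm-diff-equal} it suffices to produce a separated family $\X\subseteq\P(\R)$ with $\E=\CL(\R)\cap\bigcup_{X\in\X}\P(X)$. The idea is to show that for any $x,y\in\bigcup\E$ the relation $x\sim y$ (meaning $\{x,y\}\in\E$) behaves like the equivalence relation built in the proof of Theorem~\ref{thm-diff-equal}, and then to let $\X$ be the partition into equivalence classes. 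To verify separation I would fix distinct classes $X,Y$ witnessed by $x\not\sim y$, find $f\in\F$ for which $\{x,y\}\notin E_\G(\{f\})$, and argue that some $g\in\G$ with $g\r\{x,y\}=f\r\{x,y\}$ cannot exist because $x$ and $y$ land in different slices; here Lemma~\ref{lem-sliced-1} supplies the separating functions $h^{-}_i,h^{+}_i$ whose graphs give the disjoint open sets $U=\{z\!:f(z)<g(z)\}$ and $V=\{z\!:f(z)>g(z)\}$ required in the definition of separated.

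For $(1)\Rightarrow(2)$, the strategy is first to show that the hypothesis forces $\G$ to be complete, transitive, and sequential, and then to invoke Lemma~\ref{lem-sliced-2} to obtain the sliced decomposition into complete connected pieces. Completeness is argued exactly as in Theorem~\ref{thm-strictly-below-equiv-2}: if $g\subseteq\bigcup\G$ then every singleton lies in $\E=E_\G(\{g\})$, and since (1) forces $\E$ to be of the form $\CL(\R)\cap\bigcup_{X\in\X}\P(X)$ with $\X$ separated, one deduces $\R\in\E$ and hence $g\in\G$. For transitivity, given $a\sim b\sim c$ with distinct first coordinates, I would pick $f\in C(\R,\R)$ interpolating all three points, set $\E=E_\G(\{f\})$, and use that $\{a_1,b_1\},\{b_1,c_1\}\in\E$ forces $\{a_1,c_1\}\in\E$ because membership in the separated-union form is transitive along the equivalence classes. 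Sequentiality is handled similarly by a limiting argument: the points $a_n\sim b$ all lie in one class, and closedness of the relevant sets together with continuity of the interpolating function forces $a\sim b$.

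The main obstacle I expect is the transitivity and sequentiality arguments in $(1)\Rightarrow(2)$, where I must translate the purely set-theoretic constraint that every $\E\in\K_\G$ is a separated union into the analytic statements about the relation $\sim$ on $\bigcup\G$. The delicate point is that $\sim$ as defined via $\G$ need not a priori coincide with the equivalence coming from a separated family, so I must carefully choose the test function $f$ and the resulting $\E=E_\G(\{f\})$ so that the separated structure of $\E$ directly yields the desired relation among $a$, $b$, $c$; in the sequential case the subtlety is ensuring that the limit point's class is forced despite $\sim$ being defined through functions rather than through a fixed open partition. Once $\G$ is shown to be complete, transitive, and sequential, Lemma~\ref{lem-sliced-2} finishes the proof immediately.
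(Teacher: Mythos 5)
Your direction $(1)\Rightarrow(2)$ follows the paper's own route: show that the hypothesis forces $\G$ to be complete, transitive, and sequential (completeness as in Theorem~\ref{thm-strictly-below-equiv-2}, using that a separated family covering $\R$ must be $\{\R\}$ by connectedness of $\R$; transitivity and sequentiality via interpolating test functions $f$ and the separated-union form of $E_\G(\{f\})$), then invoke Lemma~\ref{lem-sliced-2}. As a sketch this half is sound and essentially identical to the paper.

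The $(2)\Rightarrow(1)$ half has a genuine gap. After defining $\X$ as the partition of $\bigcup\E$ into $\sim$-classes, you verify only that $\X$ is separated; you never prove the inclusion $\CL(\R)\cap\bigcup_{X\in\X}\P(X)\subseteq\E$, i.e., that every closed set $E$ contained in a single class actually belongs to $\E$. This is where the real analytic work sits, and pairwise data does not suffice: knowing that for each pair $x,y\in E$ and each $f\in\F$ there is some $g\in\G$ with $g\r\{x,y\}=f\r\{x,y\}$ does not by itself produce one $g\in\G$ with $g\r E=f\r E$. One must first argue that all witnessing functions for a fixed $f$ lie in a single slice $\G_i$ (two distinct slices are strictly separated by the functions $h^{-}_i,h^{+}_i$ of Lemma~\ref{lem-sliced-1}, so witnesses agreeing with $f$ at a common point cannot lie in different slices), conclude $f\r E\subseteq\bigcup\G_i$, and then apply Lemma~\ref{lem-func-conn-2} --- legitimate because $\G_i$ is complete and connected, hence $\bigcup\G_i$ is functionally connected --- to extend $f\r E$ to a continuous $g\subseteq\bigcup\G_i$, which lies in $\G_i$ by completeness. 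Your proposal never invokes Lemma~\ref{lem-func-conn-2}, so the key inclusion is unproved. Two further defects: the appeal to Theorem~\ref{thm-diff-equal} is inapplicable, since that theorem computes $\K_\G$ only for the specific family $(-\infty,g)\cup(g,\infty)$ with a fixed reference function $g$, and here no such $g$ exists --- condition (1) is simply the statement to be verified directly. Relatedly, your separating sets $U=\{z\!:f(z)<g(z)\}$ and $V=\{z\!:f(z)>g(z)\}$ presuppose a single reference function and are carried over from the wrong setting; the paper separates using the slicing functions themselves, taking $U_i=\{x\!:h^{-}_i(x)<f(x)<h^{+}_i(x)\}$ and $V_i=\{x\!:f(x)<h^{-}_i(x)\,\lor\,h^{+}_i(x)<f(x)\}$ for each single $f$, and then handles arbitrary $\F$ by a product construction, intersecting the members of the families $\X_f$ and checking that separation survives --- a step your class-of-$\sim$ shortcut would also have to supply.
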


\begin{proof}
$(1) \Rightarrow (2)$.
Denote $H=\bigcup\G$.
If $g\in C(\R,\R)$ and $g\subseteq H$ then let $\E=E_\G(\{g\})$.
Since $\E\in\K_\G$, there exists a strictly separated family $\X\subseteq\P(\R)$ such that
$\E=\CL(\R)\cap\bigcup_{X\in\X}\P(X)$.
For every $x\in\R$ we have $g(x)\in\G_[x]$, hence $\{x\}\in\E$, and thus $\bigcup\X=\R$.
For any disjoint open sets $U,V\subseteq\R$, if $U\cup V=\R$ then either $U=\R$ or $V=\R$.
It follows that $\X=\{\R\}$, hence $\R\in\E$ and thus $g\in\G$.
Hence, $\G=\{g\in C(\R,\R)\!:g\subseteq H\}$, so $\G$ is complete.

Let us show that $\G$ is transitive.
Let $a=(a_1,a_2)$, $b=(b_1,b_2)$, $c=(c_1,c_2)$ be such that $a_1,b_1,c_1$ are distinct,
and let $a\sim b\sim c$.
Since for any $(x,y)\in H$ there exists $g\in\G$ such that $g(x)=y$,
we can find $g,h\in\G$ such that $g(a_1)=a_2$, $g(b_1)=h(b_1)=b_2$, and $h(c_1)=c_2$.
Let $f\in C(\R,\R)$ be any function such that $f(a_1)=a_2$, $f(b_1)=b_2$, $f(c_1)=c_2$,
and let $\E=E_\G(\{f\})$.
Then $\{a_1,b_1\}\in\E$, $\{b_1,c_1\}\in\E$,
hence also $\{a_1,c_1\}\in\E$, and it follows that
there exists $f'\in\G$ such that $f'(a_1)=a_2$ and $f'(c_1)=(c_2)$.
Since $f'\subseteq H$, we have $f'\in\G$, hence $a\sim c$.

To show that $\G$ is also sequential, assume that $a,b\in H$ and there exists a sequence
$\{a_n\!:n\in\omega\}$ in $H$ such that $a_n\to a$ , $a_n\sim b$, and first coordinates
of points $a$, $b$, and $a_n$ are distinct, for every $n$.
We have to prove that $a\sim b$.
There exists a function $f\in C(\R,\R)$ such that $f(x)=y$, $f(u)=v$, and $f(x_n)=y_n$ for all $n$,
where $(x,y)=a$, $(u,v)=b$, and $(x_n,y_n)=a_n$.
Denote $\E=E_\G(\{f\})$.
Since $\E\in\K_\G$, by (1) there exists a separated family $\X\subseteq\P(\R)$
such that $\E=\CL(\R)\cap\bigcup_{X\in\X}\P(X)$.
We have $\{x\}\in\E$ and $\{x_n,u\}\in\E$, for every $n$.

It suffices to show that $\{x,u\}\in\E$.
If this is not the case then there exist distinct sets $X,Y\in\X$ such that $x\in X$ and $u\in Y$.
Since $\X$ is separated, there exist disjoint open sets $U,V$ such that $X\subseteq U$, $Y\subseteq V$,
and $(\forall Z\in\X)\ Z\subseteq U\,\lor\,Z\subseteq V$.
Since $x_n\to x$, there exists $n$ such that $x_n\in U$.
But then $x_n\notin Y$, and this is in contradiction with $\{x_n,u\}\in\E$.

We have proved that $\G$ is complete, transitive and sequential.
Then condition~(2) follows from Lemma~\ref{lem-sliced-2}.

$(2) \Rightarrow (1)$.
Let $\{\G_i\!:i\in I\}$ be a sliced system of complete connected families such that
$\G=\bigcup_{i\in I}\G_i$.
By Lemma~\ref{lem-sliced-1}, for every $i$ there exist $h^{-}_i,h^{+}_i\in C(\R,\R^*)$ such that
$\G_i\subseteq(h^{-}_i,h^{+}_i)$ and $h^{+}_i\le h^{-}_j$ whenever $i<j$.

Let $f\in C(\R,\R)$ be arbitrary and let $\E=E_\G(\{f\})$.
For every $i$, denote $X_i=\{x\in\R\!:f(x)\in\G_x\,\land\,h^{-}_i(x)<f(x)<h^{+}_{i}(x)\}$,
and let $\X=\{X_i\!:i\in I\}$.
For every $i\in I$, let us take $U_i=\{x\in\R\!:h^{-}_i(x)<f(x)<h^{+}_i(x)\}$ and
$V_i=\{x\in\R\!:f(x)<h^{-}_i(x)\,\lor\,h^{+}_i(x)<f(x)\}$.
Then $U_i,V_i$ are disjoint open sets such that $X_i\subseteq U_i$ and
$X_j\subseteq V_i$ for every $j\neq i$.
It follows that $\X$ is a separated family.

We will prove that $\E=\CL(\R)\cap\bigcup_{X\in\X}\P(X)$.
If $E\in\E$ then there exists $g\in\G$ such that $f\r E=g\r E$.
We have $g\in\G_i$ for some $i\in I$, and it easy to see that $E\subseteq X_i$.
We can conclude that $\E\subseteq\CL(\R)\cap\bigcup_{X\in X}\P(X)$.

To prove the opposite inclusion, assume that $E\notin\E$,
hence for every $g\in\G$ we have $f\r E\neq g\r E$.
If there exists $x\in E$ such that $f(x)\notin\G_x$ then $x\notin\bigcup\X$, and hence
$E\notin\CL(\R)\cap\bigcup_{X\in\X}\P(X)$.
Assume further that $E\subseteq\{x\in\R\!:f(x)\in\G_x\}$.
If there exists $i\in I$ such that $f\r E\subseteq\bigcup\G_i$, then by Lemma~\ref{lem-func-conn-2}
there exists $g\in\G_i$ such that $f\r E=g\r E$, which is impossible.
Hence, there exist $i\neq j$ and $x,y\in E$ such that
$(x,f(x))\in\bigcup\G_i$ and $(y,f(y))\in\bigcup\G_j$.
It follows that $x\in X_i$ and $y\in X_j$, hence $E\notin\CL(\R)\cap\bigcup_{X\in\X}\P(X)$.

We have proved that for every $f\in C(\R,\R)$ there exists a separated family $\X_f\subseteq\P(\R)$
such that $E_\G(\{f\})=\CL(\R)\cap\bigcup_{X\in\X_f}\P(X)$.
For arbitrary $\F\subseteq C(\R,\R)$, we have $E_\G(\F)=\bigcap_{f\in\F}E_\G(\{f\})$.
Let us take
$$\X=\Bigg\{\bigcap_{f\in\F}X_f\!:\langle X_f\!:f\in\F\rangle\in\prod_{f\in\F}\X_f\,\land\,
\bigcap_{f\in\F}X_f\neq\emptyset\Bigg\}.$$

We show that $\X$ is separated.
Let $\langle X_f\!:f\in\F\rangle,\langle Y_f\!:f\in\F\rangle\in\prod_{f\in\F}X_f$
be such that $\bigcap_{f\in\F}\neq\emptyset$, $\bigcap_{f\in\F}Y_f\neq\emptyset$,
and $\bigcap_{f\in\F}\neq\bigcap_{f\in\F}Y_f$.
Then there exists $h\in\F$ such that $X_h\neq Y_h$.
Since $\X_h$ is separated, there exist disjoint open sets $U,V\subseteq\R$ such that
$X_h\subseteq U$, $Y_h\subseteq V$, and $(\forall Z\in\X_h)\ Z\subseteq U\,\lor\,Z\subseteq V$.
It follows that $\bigcap_{f\in\F}X_f\subseteq U$ and $\bigcap_{f\in\F}Y_f\subseteq V$.
For every $\langle Z_f\!:f\in\F\rangle\in\prod_{f\in\F}\X_f$ we have
$Z_h\subseteq U\,\lor\,Z_h\subseteq V$, hence also
$\bigcap_{f\in\F}Z_f\subseteq U\,\lor\,\bigcap_{f\in\F}Z_f\subseteq V$.

For $E\in\CL(\R)$, we have $E\in E_\G(\F)$ if and only if
$(\forall f\in\F)(\exists X\in\X_f)\,E\subseteq X$ if and only if
$(\exists X\in\X)\,E\subseteq X$.
Hence, $E_\G(\F)=\CL(\R)\cap\bigcup_{X\in\X}\P(X)$ and condition~(1) follows.
\end{proof}


\begin{thebibliography}{0}

\bibitem{Blass}
A. Blass, \emph{Combinatorial cardinal characteristics of the continuum.}
In: Handbook of set theory., Springer, Dordrecht, 2010, pp. 395--489.

\bibitem{Davey-Priestley}
B.A. Davey, H.A. Priestley, \emph{Introduction to Lattices and Order, 2nd edition.}
Cambridge University Press, New York, 2002.

\bibitem{Engelking}
R. Engelking, \emph{General Topology, 2nd edition.}
Vol.~6 Sigma Series in Pure Mathematics, Heldermann Verlag, Berlin, 1989.

\bibitem{Erne}
M. Ern\'e, \emph{Adjunctions and Galois connections: origins, history and development.}
In: Galois connections and applications, Vol~565 of Math. Appl.
Kluwer Acad. Publ., Dordrecht, 2004, pp. 1--138.

\bibitem{Korner}
T.W. K\"orner, \emph{Some results on Kronecker, Dirichlet and Helson sets.}
Ann. Inst. Fourier (Grenoble)~20 (fasc. 2) (1970), 219--324.

\bibitem{Lillemets}
R. Lillemets, \emph{Galois connections between generating systems of sets and sequences.}
Positivity~21 (3) (2017), 1049--1066.

\bibitem{Rudin}
W. Rudin, \emph{Fourier analysis on groups.}
Interscience Tracts in Pure and Applied Mathematics, No. 12.
Interscience Publishers (a division of John Wiley and Sons), New York-London, 1962.

\bibitem{Szasz}
\'A. Sz\'asz, \emph{A natural Galois connection between generalized norms and metrics.}
Acta Univ. Sapientiae Math. 9 (2) (2017), 360--373.

\end{thebibliography}
\end{document}